\long\def\skipit#1{} 
\numberwithin{equation}{section}
\numberwithin{figure}{section}
\numberwithin{table}{section}
\newtheorem{thm}{Theorem}[section]
\newtheorem{lem}[thm]{Lemma}
\theoremstyle{definition}
\newtheorem{definition}[thm]{Definition}
\newtheorem{eg}[thm]{Example}
\newtheorem{conj}[thm]{Conjecture}
\def\fl#1{\lfloor{#1}\rfloor}
\def\dfl#1{\left\lfloor{#1}\right\rfloor}
\def\bgg#1{\biggl({#1}\biggr)}
\def\all#1{\qquad\textrm{for all $#1$}}
\def\tW{\tilde{W}}
\def\R{\mathbb{R}}
\def\mR{R}
\def\<{\;<\;}
\def\={\;=\;}
\def\>{\;>\;}
\def\eqrl{\quad\Longleftrightarrow\quad}
\newcommand{\rmand}{\qquad\hbox{and}\qquad}
\newcounter{hours}
\newcounter{minutes}
\newcommand{\printtime}{
\setcounter{hours}{\time/60}
\setcounter{minutes}{\time-\value{hours}*60}
\ifthenelse{\value{hours}<10}{0}{}\thehours:\hskip0pt
\ifthenelse{\value{minutes}<10}{0}{}\theminutes}
\begin{document}

\title{Root geometry of polynomial sequences I: Type $(0,1)$}

\author[J.L. Gross, T. Mansour, T.W. Tucker, and D.G.L. Wang]{Jonathan L. Gross
}
\address{
Department of Computer Science  \\
Columbia University, New York, NY 10027, USA; \quad
email: gross@cs.columbia.edu
}
\author[]{Toufik Mansour
}
\address{
Department of Mathematics  \\
University of Haifa, 31905 Haifa, Israel;  \quad
email: tmansour@univ.haifa.ac.il}
\author[]{Thomas W. Tucker
}
\address{
Department of Mathematics  \\
Colgate University, Hamilton, NY 13346, USA; \quad
email: ttucker@colgate.edu
}
\author[]{David G.L. Wang
}
\address{
School of Mathematics and Statistics  \\
Beijing Institute of Technology, 102488 Beijing, P. R. China;  \quad
email: david.combin@gmail.com}

\keywords{genus distribution; real-rooted polynomial; recurrence; root geometry}

\begin{abstract}   
This paper is concerned with the distribution in the complex plane of the roots of a polynomial sequence $\{W_n(x)\}_{n\ge0}$ given by a recursion $W_n(x)=aW_{n-1}(x)+(bx+c)W_{n-2}(x)$, with $W_0(x)=1$ and $W_1(x)=t(x-r)$, where $a>0$, $b>0$, and $c,t,r\in\mathbb{R}$.  Our results include proof of the distinct-real-rootedness of every such polynomial $W_n(x)$, derivation of the best bound for the zero-set $\{x\mid W_n(x)=0\ \text{for some $n\ge1$}\}$, and determination of three precise limit points of this zero-set. Also, we give several applications from combinatorics and topological graph theory.
\end{abstract}

\maketitle

%

\bigskip

\section{\large Introduction}
Gian-Carlo Rota~\cite{Rota85} has said of the ubiquity of zeros of polynomials in combinatorics
\begin{center}
{\bf ``Disparate problems in combinatorics ... do have at least one common feature:\\
their solution can be reduced to the problem of finding the roots of some\\
polynomial or analytic function.''}
\end{center}
One such reduction is due to Newton's inequality, which implies that every real-rooted polynomial is log-concave.
As observed by Brenti~\cite{Bre89,Bre94}, polynomials that arise from combinatorial problems often turn out to be real-rooted.  

Given a sequence of polynomials $\{W_n(x)\}_{n\ge0}$, we refer to the distribution of the set of zeros, taken over all $n$, as the \emph{root geometry} of that sequence.  General information for the root geometry of polynomials, especially the geometry of non-real roots, is given by Marden \cite{Mar85B}; see also \cite{Obr03B,Pra10B}.

This research arose during efforts by the present authors to affirm a quarter-century old conjecture (abbr. the LCGD conjecture) that the genus distribution (or genus polynomial, equivalently) of every graph is log-concave~\cite{FGS89}. Although it was conjectured by Stahl \cite{St97} that genus polynomials are real-rooted, Chen and Liu \cite{CL10} proved otherwise. Subsequently, various genus polynomials have been shown to have complex roots.  Of course, this separates the problem of determining which graphs have real-rooted genus polynomials from trying to prove the  LCGD conjecture.

After unexpected success~\cite{GMTW14} in proving the real-rootedness of the genus polynomials of iterated claws, we attempted the real-rootedness of genus polynomials for iterated $3$-wheels \cite{PKG14}. The \textit{iterated 3-wheel} $W_3^n$ is the graph obtained from the cartesian product $C_3\Box P_{n+1}$, where $P_k$ is a path graph with $k$ vertices, by contracting a 3-cycle $C_3$ at one end of the product to a single vertex.  By a preprocess of normalization,  we transformed the problem equivalently into the following conjecture:
\begin{conj}\label{conj:wheel}
Let $W_0(x)=1/27$,\; $W_1(x)=1+7x$,\; $W_2(x)=1+139x+1120x^2+468x^3$,\; and
$$W_n(x) ~=~ (1+144x)W_{n-1}(x)+54x(2-29x+306x^2)W_{n-2}(x)-5832x^3(1-11x)W_{n-3}(x),$$
\indent for $n\ge3$. Then each of the polynomials $W_n(x)$ is real-rooted.
\end{conj}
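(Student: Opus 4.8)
The plan is to prove Conjecture~\ref{conj:wheel} by a Sturm-type interlacing induction on $n$, adapted to the order-three recursion; the order-two machinery developed in this paper for recursions $W_n=aW_{n-1}+(bx+c)W_{n-2}$ does not apply to it verbatim, but the precedent of \cite{GMTW14}, where the genus polynomials of iterated claws were shown to be real-rooted, suggests that a recursion-driven argument can succeed here.

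First I would localize the zero set. One has $W_n(0)=1$ for all $n\ge1$ (the three coefficient polynomials all vanish at $0$), and, being a normalization of the genus polynomial of $W_3^n$, each $W_n$ has nonnegative coefficients, so every real zero of $W_n$ is negative; crude estimates obtained by iterating the recursion should then confine every real zero of every $W_n$ to a fixed interval $I=[-M,0)$ with an explicit $M$. On $I$ one records the signs of the coefficient polynomials: $1+144x$ is positive on $(-1/144,0)$ and negative on $[-M,-1/144)$; the quadratic factor of $54x(2-29x+306x^2)$ has negative discriminant, so this coefficient is negative throughout $I$; and $-5832x^3(1-11x)$ is positive throughout $I$. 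Only the leading coefficient changes sign on $I$, which forces a modest case split later.

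The inductive core would carry, by induction on $n$, a joint hypothesis on the triple $(W_{n-2},W_{n-1},W_n)$: each is real-rooted with all zeros in $I$; the zeros of $W_{n-1}$ strictly separate those of $W_{n-2}$ and the zeros of $W_n$ strictly separate those of $W_{n-1}$; and the sign normalizations making these separations mutually consistent hold. Part of the induction is to confirm the degree pattern $\deg W_n=\lfloor 3n/2\rfloor$, so that the degrees $1,3,4,6,7,9,\dots$ increase alternately by $2$ and by $1$ --- the interlacing hypothesis must be phrased to accommodate the steps in which $W_n$ has two more zeros than $W_{n-1}$. In the inductive step one evaluates the recursion at a zero $\xi$ of $W_n$, where it collapses to $W_{n+1}(\xi)=54\xi(2-29\xi+306\xi^2)\,W_{n-1}(\xi)-5832\xi^3(1-11\xi)\,W_{n-2}(\xi)$, and one wants to conclude that $W_{n+1}$ alternates strictly in sign on the zeros of $W_n$, with the correct signs at the ends of $I$, hence attains all its zeros in $I$ and interlaces $W_n$.

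That last step is where I expect the main obstacle. Unlike the order-two case, two predecessors feed into $W_{n+1}(\xi)$, and although the coefficient signs on $I$ are fixed, $W_{n-1}(\xi)$ and $W_{n-2}(\xi)$ need not have opposite signs at a common zero $\xi$ of $W_n$, so the two terms may compete rather than reinforce and the sign of $W_{n+1}(\xi)$ is not determined by sign data alone. Resolving this will require a finer invariant than bare interlacing --- for instance a quantitative comparison of $\bigl|54\xi(2-29\xi+306\xi^2)\,W_{n-1}(\xi)\bigr|$ with $\bigl|5832\xi^3(1-11\xi)\,W_{n-2}(\xi)\bigr|$ at the relevant points, or, equivalently, passing to the $3\times3$ companion matrix of the recursion and showing that some cone of ``properly interlacing triples'' of polynomials is preserved by it for every $x\in I$; the precise shape of such an invariant should be read off from $W_1,W_2,W_3$ and a handful of further $W_n$ computed numerically. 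A cleaner route would be a factorization or substitution reducing the problem to the type-$(0,1)$ order-two setting of this paper, but the characteristic polynomial $\lambda^3-(1+144x)\lambda^2-54x(2-29x+306x^2)\lambda+5832x^3(1-11x)$ has no root in $\R[x]$ and is therefore irreducible over $\R(x)$, so no reduction of the most naive kind is available; one would instead have to work with the combinatorially meaningful partial genus polynomials of $W_3^n$, whose matrix recursion has exactly the above scalar reduction.
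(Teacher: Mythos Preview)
The paper does not prove Conjecture~\ref{conj:wheel}; it is presented there as an open problem. The surrounding text says explicitly that ``the complications encountered led us to consider the more general problem for polynomial sequences defined by a general linear recurrence of degree~$3$ \ldots\ As one may imagine, the difficulty did not decrease. This led us to some recurrences of degree~$2$.'' The only evidence offered is brute-force verification for $n\le 220$. So there is no proof in the paper to compare your proposal against.

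As for your proposal itself: it is a plan, not a proof, and you are candid about that. You correctly locate the obstruction: at a zero $\xi$ of $W_n$ the recursion leaves two surviving terms whose signs can compete, so interlacing data alone does not pin down the sign of $W_{n+1}(\xi)$. That is exactly the point at which the order-two interlacing machinery of this paper breaks, and it is presumably why the authors retreated to type~$(0,1)$. Your suggested remedies --- a quantitative domination inequality, a preserved cone of interlacing triples under the $3\times3$ transfer matrix, or a reduction via partial genus polynomials --- are all reasonable directions, but none is carried out, and each would be the heart of the argument. Until one of them is made to work, what you have is the same outline the authors likely had before deciding the problem was out of reach; the conjecture remains open.
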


Real-rootedness of the genus polynomials of iterated 3-wheels $W_3^n$ was confirmed by brute force computation for all $n\le 220$. The complications encountered led us to consider the more general problem for polynomial sequences defined by a general linear recurrence of degree $3$, with polynomial coefficients.  As one may imagine, the difficulty did not decrease. This led us to some recurrences of degree $2$. In particular, let $W_n(x)$ be a sequence of polynomials satisfying the recursion 
\begin{align}
W_n(x)~&=~ A(x)W_{n-1}(x)+B(x)W_{n-2}(x),\label{rec:W}
\end{align}
for $n\ge 2$, where $A(x)$ and $B(x)$ are polynomials, $W_0(x)$ is a constant, and $W_1(x)$ is a linear polynomial. When the polynomials $A(x)$ and $B(x)$ have degrees $k$ and $\ell$, respectively, we call the sequence $\{W_n(x)\}$ defined by \eqref{rec:W}  \emph{a recursive polynomial sequence of type~$(k,\ell)$}.

Classical bounds on the roots of a polynomial are given in terms of its coefficients. Examples include the Fujiwara bound \cite{Fuj16}, the Cauchy bound \cite{Cau1829}, and the Hirst-Macey bound \cite{HM97}. More bounds and also some background are given by Rahman and Schmeisser \cite{RS02}, where the reader may also find, for instance, Rouch\'e's theorem, Landau's inequality, and the Laguerre-Samuelson inequality, subject to bounding the roots of a polynomial. Conversely, the real-rooted polynomials with all roots in a prescribed interval have been characterized in terms of positive semi-definiteness of related Hankel matrices (see Lasserre \cite{Las02}). 

This paper is primarily concerned with the root geometry of a sequence of recursive polynomials of type $(0,1)$.

\bigskip
\section{\large Main Results and Examples}

As a preliminary, we consider a recursive polynomial sequence of type $(0,0)$, that is, one in which the polynomials $A(x)$ and $B(x)$ are constants, $A$ and $B$.  This serves as a bridge to considering a recursive sequence of polynomials of types in which $A(x)$ and $B(x)$ have other degree combinations.

\begin{lem}\label{lem:00}
Let $A,B\in\mathbb{R}$ with $A\neq0$.  Let $\{W_n\}_{n\ge0}$ be a sequence of real numbers satisfying the initial condition $W_0=1$ and the recursion $W_n=AW_{n-1}+BW_{n-2}$. Writing $\Delta=A^2+4B$ and $g^\pm=(2W_1-A\pm\sqrt{\Delta})/2$, we have
\begin{equation}\label{sol:W}
W_n=\begin{cases}
{\displaystyle \bgg{1+{n(2W_1-A)\over A}}\bgg{{A\over 2}}^n},&\textrm{ if $\Delta=0$};\\[10pt]
{\displaystyle {g^+(A+\sqrt{\Delta}\,)^n-g^-(A-\sqrt{\Delta}\,)^n
\over 2^{n}\sqrt{\Delta}}}
,&\textrm{ if $\Delta\neq0$}.
\end{cases}
\end{equation}
In particular, if $Re^{i\theta}$ is the polar representation of $A+\sqrt{\Delta}$, then we have
\begin{equation}\label{sol:W:Delta<0}
W_n=\bgg{R\over2}^n\bgg{\cos{n\theta}+{\sin{n\theta}\over \sqrt{-\Delta}}},
\qquad\textrm{if $\Delta<0$}.
\end{equation}
\end{lem}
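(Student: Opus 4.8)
\medskip
\noindent\emph{Proof proposal.}\quad The plan is to prove this by the classical characteristic-equation method for a second-order homogeneous linear recurrence with constant coefficients, treating the cases $\Delta\neq0$ and $\Delta=0$ separately and then specializing the first to $\Delta<0$. First I would record that the characteristic polynomial of $W_n=AW_{n-1}+BW_{n-2}$ is $\lambda^2-A\lambda-B$, whose discriminant is exactly $\Delta=A^2+4B$ and whose roots are $\lambda^\pm=(A\pm\sqrt{\Delta}\,)/2$, with $\lambda^++\lambda^-=A$ and $\lambda^+-\lambda^-=\sqrt{\Delta}$. When $\Delta\neq0$ these roots are distinct, so $W_n=c_+(\lambda^+)^n+c_-(\lambda^-)^n$ for suitable constants $c_\pm$; imposing $W_0=1$ gives $c_++c_-=1$, and imposing the value of $W_1$ and using $\lambda^\pm=(A\pm\sqrt{\Delta}\,)/2$ gives $\tfrac{A}{2}+\tfrac{\sqrt{\Delta}}{2}(c_+-c_-)=W_1$, hence $c_+-c_-=(2W_1-A)/\sqrt{\Delta}$. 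Solving these two linear equations yields $c_+=g^+/\sqrt{\Delta}$ and $c_-=-g^-/\sqrt{\Delta}$, and substituting together with $(\lambda^\pm)^n=(A\pm\sqrt{\Delta}\,)^n/2^n$ produces the second line of \eqref{sol:W}.

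For $\Delta=0$ the characteristic polynomial has the double root $A/2$, which is nonzero precisely because $A\neq0$, so the general solution is $W_n=(c_1+c_2n)(A/2)^n$. Then $W_0=1$ forces $c_1=1$, and the value of $W_1$ forces $(1+c_2)(A/2)=W_1$, i.e.\ $c_2=(2W_1-A)/A$ — here the hypothesis $A\neq0$ is used a second time — which is exactly the first line of \eqref{sol:W}. If one prefers a cleaner write-up, both lines of \eqref{sol:W} can instead be taken as guesses and verified by induction on $n$: the two base cases $n=0,1$ are the computations just made, and the induction step reduces either to the identity $\lambda^2=A\lambda+B$ for $\lambda=\lambda^\pm$, or (when $\Delta=0$, so $B=-A^2/4$) to the elementary identity $n=2(n-1)-(n-2)$.

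Finally, for $\Delta<0$ we have $\sqrt{\Delta}=i\sqrt{-\Delta}$ purely imaginary, so $A-\sqrt{\Delta}$ is the complex conjugate of $A+\sqrt{\Delta}=Re^{i\theta}$, namely $Re^{-i\theta}$, and (since $2W_1-A$ is real) $g^-$ is the complex conjugate of $g^+$. Plugging these into the second line of \eqref{sol:W}, the numerator becomes $R^n\bigl(g^+e^{in\theta}-\overline{g^+e^{in\theta}}\bigr)=2iR^n\operatorname{Im}\bigl(g^+e^{in\theta}\bigr)$ while the denominator is $2^n i\sqrt{-\Delta}$; expanding $g^+e^{in\theta}$ by Euler's formula and extracting the imaginary part collapses the whole expression to a real linear combination of $\cos n\theta$ and $\sin n\theta$, which is \eqref{sol:W:Delta<0}. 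I do not anticipate any serious obstacle — the lemma is routine — and the only points demanding care are fixing the branch of $\sqrt{\Delta}$ consistently so that $\lambda^\pm$ and $g^\pm$ form genuine conjugate pairs when $\Delta<0$, bookkeeping the factors of $2^n$ and $\sqrt{-\Delta}$ (and the coefficient of the sine term) correctly in that last simplification, and not forgetting that the $\Delta=0$ case genuinely relies on $A\neq0$.
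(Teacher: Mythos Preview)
Your proposal is correct and follows exactly the standard characteristic-equation method that the paper itself invokes: the paper's proof simply cites elementary textbooks for \eqref{sol:W} and then observes, as you do, that $A-\sqrt{\Delta}=Re^{-i\theta}$ is the conjugate of $A+\sqrt{\Delta}$ to pass to \eqref{sol:W:Delta<0}. Your write-up is in fact more detailed than the paper's; the only caution is the one you already flag --- when you carry out the final imaginary-part extraction, the coefficient of $\sin n\theta$ comes out as $(2W_1-A)/\sqrt{-\Delta}$, which is indeed how the formula is used later in the paper (see the proof of Lemma~\ref{thm:sgn:d<0:01}).
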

\begin{proof}
The solution~(\ref{sol:W}) to Recursion~(\ref{rec:W}) can be found in elementary textbooks;  for more extensive discussion, see Kocic and Ladas~\cite{KL93}.  Note that when $A+\sqrt{\Delta}=Re^{i\theta}$, we have $A-\sqrt{\Delta}=Re^{-i\theta}$, since $\sqrt{\Delta}$ is either purely real or purely imaginary. Then, the expression~(\ref{sol:W:Delta<0}) can be obtained from~(\ref{sol:W}) directly.
\end{proof}

For instance, the Fibonacci sequence $\{f_n\}_{n\ge0}$ is defined by the recursion 
$$f_n ~=~ f_{n-1}+f_{n-2},$$
with $f_0=f_1=1$.  With $A=B=W_1=1$ (hence, $\Delta=5$ and $g^\pm=(1\pm\sqrt{5})/2$), Lemma~\ref{lem:00}  gives Binet's formula, as expected:
$W_n={1\over\sqrt{5}}\left((g^+)^{n+1}-(g^-)^{n+1}\right)$.
Thus, we see how Lemma~\ref{lem:00} creates conditions forrecursive sequences of type~$(0,0)$, under which the root geometry problem becomes easy.  
\medskip

\subsection{Main result}
The aim of this paper is to describe the root geometry of all polynomial sequences of type $(0,1)$.  In order to formulate the main results of this paper, we use the following terminology. 
\smallskip

\begin{definition}
The \emph{zero-set} of a polynomial $p(x)$ is defined to be the set of all its roots.  It is said to be \emph{distinct-real-rooted} if all its roots are distinct and real.  
\end{definition}
\smallskip

\begin{definition}\label{def:interlacing:01} 
Let $s$ be a positive integer, and let $t\in\{s-1,\,s\}$.
Let $X=\{x_1,x_2,\ldots,x_s\}$ and $Y=\{y_1,y_2,\ldots,y_t\}$ be ordered sets of real numbers.  We say that the set \emph{$X$ interlaces the set $Y$ from both sides}, denoted $X\bowtie Y$, if $t=s-1$ and
\begin{equation}\label{def:bowtie}
x_1<y_1<x_2<y_2<\cdots<x_{s-1}<y_{t}<x_s.
\end{equation}
Note that the bow-tie symbol $\bowtie$ consists of a ``times'' symbol $\times$ in the middle and a bar at each side.  The left (resp., right) bar indicates that the smallest (resp., largest) number in the chain~(\ref{def:bowtie}) is from the set~$X$. We say that the set \emph{$X$ interlaces $Y$ from the right}, denoted $X\rtimes Y$, if either~$X\bowtie Y$ or
\begin{equation}\label{def:rtimes}
t=s
\rmand
y_1<x_1<y_2<x_2<\cdots<x_{s-1}<y_t<x_s.
\end{equation}
Here the bar to the right of the ``times'' symbol~$\times$ within the symbol~$\rtimes$ means  that the largest number in the chain~(\ref{def:rtimes}) is from~$X$.  We observe that any set consisting of a single real number interlaces the empty set.
\end{definition}
\smallskip

For any integers $m\le n$, we denote the set $\{m,\,m+1,\ldots,\,n\}$ by $[m,n]$.  Moreover, when $m=1$, we may denote the set $[1,n]$ by~$[n]$.  Lemma~\ref{lem:fg:01} presents some essential consequences of the interlacing property.
\smallskip

\begin{lem}\label{lem:fg:01}
Let $f(x)$ and $g(x)$ be polynomials with zero-sets $X$ and $Y$ respectively.
Let $\beta\in\R$, and let 
\[ X'=X\cap(-\infty,\,\beta)=\{x_1,\,x_2,\,\ldots,\,x_p\} \rmand Y'=Y\cap(-\infty,\,\beta)=\{y_1,\,y_2,\,\ldots,\,y_q\}\]
be two ordered sets such that $X'\rtimes Y'$.
Let $x_0=y_0=-\infty$ and $y_{q+1}=\beta$.
\begin{itemize}
\item If $f(\beta)\ne0$, then we have
\begin{equation}\label{ineq:lem:f}
f(y_{j})f(\beta)(-1)^{q-j}<0\all{j\in[q+1-p,\,q+1]};
\end{equation}
\item If $g(\beta)\ne0$, then we have
\begin{equation}\label{ineq:lem:g}
g(x_{i})g(\beta)(-1)^{p-i}>0\all{i\in[p-q,\,p]}.
\end{equation}
\end{itemize}
\end{lem}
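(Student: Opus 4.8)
The plan is to reduce both inequalities to a single elementary principle about sign changes: for a nonzero polynomial $h$ and reals $a<b$ at which $h$ does not vanish --- allowing $a=-\infty$, with $\operatorname{sign}h(-\infty)$ read as the sign of $h$ at large negative argument --- one has $\operatorname{sign}h(b)=(-1)^{N}\operatorname{sign}h(a)$, where $N$ is the number of roots of $h$ in $(a,b)$. This is just the statement that $h$ changes sign at each of its roots, and it uses that those roots are simple (odd multiplicity would suffice), which I take for granted, as it holds in every application of the lemma. Before invoking the principle I would make two observations. Since the chains (\ref{def:bowtie}) and (\ref{def:rtimes}) defining $\rtimes$ are strict, $X'$ and $Y'$ are disjoint; and since $X'$ (resp.\ $Y'$) comprises \emph{all} roots of $f$ (resp.\ $g$) lying below $\beta$, it follows that $f(y_j)\ne0$ for $1\le j\le q$ and $g(x_i)\ne0$ for $1\le i\le p$. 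Also $f(-\infty),g(-\infty)$ are nonzero, $f(\beta),g(\beta)$ are nonzero by hypothesis, and $y_{q+1}=\beta$; so all quantities in (\ref{ineq:lem:f}) and (\ref{ineq:lem:g}) are meaningfully signed.

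The substance is a single uniform count, which I would read off directly from the two admissible shapes of a chain witnessing $X'\rtimes Y'$:
\begin{equation*}
\#\bigl(X\cap(y_j,\,\beta)\bigr)=q-j+1\all{j\in[q+1-p,\,q+1]}
\rmand
\#\bigl(Y\cap(x_i,\,\beta)\bigr)=p-i\all{i\in[p-q,\,p]}.
\end{equation*}
Indeed, the roots of $f$ below $\beta$ are exactly $x_1<\cdots<x_p$. In the bow-tie case $q=p-1$ with $x_\ell<y_\ell<x_{\ell+1}$, so the roots of $f$ exceeding $y_j$ are $x_{j+1},\ldots,x_p$, that is $p-j=q-j+1$ of them; in case (\ref{def:rtimes}) one has $q=p$ with $x_{\ell-1}<y_\ell<x_\ell$ (setting $x_0=-\infty$), so the roots of $f$ exceeding $y_j$ are $x_j,\ldots,x_p$, that is $p-j+1=q-j+1$ of them. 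The extreme indices fit too: $j=q+1$ gives $y_{q+1}=\beta$ and count $0$, while $j=0$ (which arises only in the bow-tie case, where $y_0=-\infty$) gives count $p=q+1$. The companion count for $g$ is identical in spirit, obtained from the roots $y_1<\cdots<y_q$ of $g$, with $i=0$ occurring only when $q=p$.

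Granting the count, the conclusions are parity bookkeeping. For $j\in[q+1-p,\,q+1]$ with $f(\beta)\ne0$: if $j\le q$ the principle and the count give $\operatorname{sign}f(\beta)=(-1)^{q-j+1}\operatorname{sign}f(y_j)$, and this holds trivially for $j=q+1$ since then $y_j=\beta$; hence $f(y_j)f(\beta)$ has sign $(-1)^{q-j+1}$, so $f(y_j)f(\beta)(-1)^{q-j}$ has sign $(-1)^{2(q-j)+1}=-1$, which is (\ref{ineq:lem:f}). Symmetrically, for $i\in[p-q,\,p]$ with $g(\beta)\ne0$: when $i\ge1$ one has $g(x_i)\ne0$, and when $i=0$ (possible only if $q=p$, with $x_0=-\infty$) the count reads $\#(Y\cap(-\infty,\beta))=q=p$; either way $g(x_i)g(\beta)$ has sign $(-1)^{p-i}$, so $g(x_i)g(\beta)(-1)^{p-i}$ has sign $(-1)^{2(p-i)}=1$, which is (\ref{ineq:lem:g}). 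The only step with genuine content is the uniform count, and the closest thing to an obstacle there is treating the degenerate endpoints $-\infty$ and $\beta$ on the same footing as the generic indices; no estimation or analytic input enters.
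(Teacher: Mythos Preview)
Your proof is correct and follows essentially the same idea as the paper's: both exploit that $f$ (resp.\ $g$) changes sign exactly once on each subinterval determined by the interlacing chain, and both implicitly rely on the roots being simple. The only difference is packaging: the paper argues telescopically, writing down the consecutive sign-change inequalities $f(y_{q+1-i})f(y_{q+2-i})<0$ and multiplying the first $i$ of them, whereas you subsume this into a single root-counting principle $\operatorname{sign}h(b)=(-1)^N\operatorname{sign}h(a)$ and then verify the count $N$ directly from the two interlacing shapes. Your formulation is a bit cleaner and handles the endpoint cases $j=q+1$ and $i=0$ more transparently; the paper's telescoping is more hands-on but arrives at the same place.
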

\begin{proof}
See Appendix \ref{prooflem:fg:01}.
\end{proof}
\smallskip

\begin{definition}
For any sequence $\{x_n\}$ of real numbers, we write $x_n\searrow x$ if $x_n$ converges to the number $x$ decreasingly, and we write $x_n\nearrow x$ if $x_n$ converges to the number $x$ increasingly.
\end{definition}

Our main result, Theorem \ref{Mthm:main:01}, concerns a polynomial sequence $\{W_n(x)\}$ of type $(0,1)$ in which $A(x)=a$ and $B(x)=bx+c$, with $ab\ne0$ and $c\in\R$. The proof of Theorem \ref{Mthm:main:01} is given in Section 3. 
\smallskip

\begin{thm}\label{Mthm:main:01}
Let $\{W_n(x)\}_{n\ge0}$ be the polynomial sequence defined by the recursion 
\begin{align}
W_n(x) ~=~ aW_{n-1}(x)+(bx+c)W_{n-2}(x)\label{Mrec:W:01}
\end{align}
with initial conditions $W_0(x)=1$ and $W_1(x)=t(x-r)$, where $a,b,t>0$, $c,r\in\R$, and $r\ne -c/b$.  Also, let $R_n=\{\xi_{n,1},\ldots,\xi_{n,d_n}\}$ be the ordered zero set of $W_n(x)$.
Then the polynomial $W_n(x)$ has degree $d_n= \lfloor(n+1)/2\rfloor$ and is distinct-real-rooted.  Moreover, using the notations 
$$x^*=-{4c+a^2\over 4b},\quad r^*=x^*-{a\over 2t}, \rmand y^*=r+{(at+b)-\sqrt{(at+b)^2+4t^2(br+c)}\over 2t^2}$$
we have the following conclusions: 
\begin{itemize}
\item[(i)]
If $r\in(-\infty,r^*]$,
then $R_{n+1}\rtimes R_n$
and $R_{n+2}\bowtie R_n$ for $n\ge1$;
$\xi_{n,\,d_n-i}\nearrow x^*$ for any fixed $i\ge0$.

\item[(ii)]
If $r\in(r^*,-c/b)$ then $R_{n+1}\rtimes R_n$
and $R_{n+2}\bowtie R_n$ for $n\ge1$;
$\xi_{n,\,d_n-i}\nearrow x^*$ for any fixed $i\ge1$; and $\xi_{n,d_n}\nearrow y^*$ with $x^*<y^*$.

\item[(iii)]
If $r\in(-c/b,+\infty)$ then
$R_{n+1}'\rtimes R_n'$
and $R_{n+2}'\bowtie R_n'$ for $n\ge3$;
$\xi_{n,d_n-i}\nearrow x^*$ for any fixed  $i\ge1$;
$\xi_{2n,d_{2n}}\nearrow y^*$ and $\xi_{2n-1,d_{2n-1}}\searrow y^*$ with $x^*<-c/b<y^*<x_{2,d_2}$.
\end{itemize}
The best bounds for the set $\cup_{n\ge1}R_n$ are, in these three respective cases,
$(-\infty,\,x^*)$, $(-\infty,\,y^*)$ and $(-\infty,\,r)$.  Furthermore, the sequence $\xi_{n,i}$ converges to $-\infty$ for any fixed $i\ge1$.  
\end{thm}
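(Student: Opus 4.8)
The plan is to reduce everything to the two explicit closed forms for $W_n$ furnished by Lemma~\ref{lem:00}, applied pointwise in $x$ with $A=a$ and $B=bx+c$. An easy induction on~\eqref{Mrec:W:01} shows that the leading coefficient of $W_n$ stays positive --- for $n$ odd it equals $b$ times that of $W_{n-2}$, and for $n$ even it equals $a$ times that of $W_{n-1}$ plus $b$ times that of $W_{n-2}$ --- so no cancellation occurs and $\deg W_n=d_n=\lfloor(n+1)/2\rfloor$. The quantity controlling Lemma~\ref{lem:00} is the ``local discriminant'' $\Delta(x)=a^2+4(bx+c)$, and one notes that $\Delta(x)<0$ precisely for $x<x^*$, $\Delta(x^*)=0$, and $\Delta(x)>0$ for $x>x^*$. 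The following facts will also be recorded: $W_n(-c/b)=a^{n-1}t(-c/b-r)$ has the constant sign of $-c/b-r$; the value $W_n(x^*)$ is given by the first branch of~\eqref{sol:W}, where $2W_1(x^*)-a=2t(x^*-r)-a$ vanishes exactly when $r=r^*$; and, when $r>r^*$, the number $y^*$ is the unique zero in $(x^*,\infty)$ of $g^+(x):=\bigl(2W_1(x)-a+\sqrt{\Delta(x)}\bigr)/2$, a function that is strictly increasing on $[x^*,\infty)$.

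On the ray $x<x^*$, where $\Delta<0$ and $bx+c<0$, Lemma~\ref{lem:00} gives the \emph{exact} representation $W_n(x)=(R(x)/2)^n\,C(x)\cos\!\bigl(n\theta(x)-\phi(x)\bigr)$, with $R(x)=2\sqrt{-(bx+c)}>0$, $C(x)\ge1$, $\theta(x)=\arctan\!\bigl(\sqrt{-\Delta(x)}/a\bigr)$ strictly decreasing from $\pi/2$ (as $x\to-\infty$) to $0$ (as $x\to x^{*-}$), and $\phi(x)$ a smooth phase whose limit at $x^{*-}$ is $\pi/2$, $0$, or $-\pi/2$ according as $r<r^*$, $r=r^*$, or $r>r^*$. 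Hence the zeros of $W_n$ in $(-\infty,x^*)$ are exactly the solutions of $n\theta(x)-\phi(x)\in\tfrac\pi2+\pi\mathbb Z$; since (for $n$ past a small threshold) $n\theta-\phi$ is strictly monotone with the stated endpoint limits, a direct count produces exactly $d_n$ such zeros when $r\le r^*$ and exactly $d_n-1$ when $r>r^*$, all of them simple. In the former case this already accounts for all $d_n$ roots of $W_n$, which is therefore distinct-real-rooted with no root $\ge x^*$; in the latter there is exactly one more root, necessarily real (a conjugate pair would overflow the degree) and simple (the trigonometric zeros being simple, it cannot collide with them), so $W_n$ is distinct-real-rooted here too. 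The representation also yields the limits below $x^*$: the $i$-th smallest root is the one at which $n\theta-\phi$ takes its $i$-th largest admissible value, forcing $\theta(\xi_{n,i})\to\pi/2$ and hence $\xi_{n,i}\to-\infty$ for each fixed $i$; and a root near $x^*$ is one at which $n\theta-\phi$ is $O(1)$, forcing $\theta(\cdot)=O(1/n)\to0^+$, so $\xi_{n,d_n-i}\nearrow x^*$ for each fixed $i\ge0$ in case~(i) and each fixed $i\ge1$ in cases~(ii),~(iii).

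It remains, in cases~(ii) and~(iii), to pin down the single root $\ge x^*$ and to establish the best bounds. On $(x^*,\infty)$ the second branch of~\eqref{sol:W} applies, and for each fixed $x\ne y^*$ the term with base $a+\sqrt{\Delta(x)}$ dominates, so $\operatorname{sign}W_n(x)=\operatorname{sign}g^+(x)$ for all large $n$; as $g^+$ is increasing with its only zero at $y^*$, this sign is negative on $(x^*,y^*)$ and positive on $(y^*,\infty)$, which squeezes the root toward $y^*$. In case~(ii) one verifies $x^*<y^*<-c/b$, and a one-line induction on~\eqref{Mrec:W:01} (using $bx+c>0$ and $W_1(x)>0$ for $x>-c/b$) gives $W_n>0$ on $(-c/b,\infty)$; so the root lies in $(x^*,y^*)$ with $\xi_{n,d_n}\nearrow y^*$, all remaining roots being below $x^*$, and the best bound is $(-\infty,y^*)$. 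In case~(iii) one verifies $x^*<-c/b<y^*<r$; here $W_n(y^*)=\bigl((a-\sqrt{\Delta(y^*)})/2\bigr)^n$ alternates in sign because $\Delta(y^*)>a^2$, so together with $W_n>0$ on $(r,\infty)$ the root is forced just below $y^*$ for even $n$ and just above $y^*$ for odd $n$, giving $\xi_{2n,d_{2n}}\nearrow y^*$ and $\xi_{2n-1,d_{2n-1}}\searrow y^*$; the supremum of the whole zero-set is then $\xi_{1,d_1}=r$, so the best bound is $(-\infty,r)$. Case~(i) was already complete: no root is $\ge x^*$, $\xi_{n,d_n}\nearrow x^*$, and the best bound is $(-\infty,x^*)$.

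Finally, the interlacing statements $R_{n+1}\rtimes R_n$ and $R_{n+2}\bowtie R_n$ (and, in case~(iii), their analogues for the truncations $R_n'=R_n\cap(-\infty,-c/b)$) are established by induction on $n$, with Lemma~\ref{lem:fg:01} supplying the mechanism: evaluating~\eqref{Mrec:W:01} at a root $\xi$ of $W_n$ gives $W_{n+1}(\xi)=(b\xi+c)W_{n-1}(\xi)$, and the alternation of the signs of $W_{n-1}$ along the roots of $W_n$ --- read off from Lemma~\ref{lem:fg:01} with $\beta=x^*$ or $\beta=-c/b$ --- passes, after multiplication by $b\xi+c$ (whose sign is \emph{fixed} while $\xi<-c/b$), to an alternation of the signs of $W_{n+1}$, yielding one root of $W_{n+1}$ strictly between each two consecutive roots of $W_n$; the root count together with the behaviour at $\pm\infty$ and near $x^*$ then upgrades this to $\rtimes$ or $\bowtie$. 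The main obstacle is case~(iii): as soon as a root crosses $-c/b$ the multiplier $b\xi+c$ changes sign, so the clean interlacing survives only for the truncated sets and only once the configuration has stabilised $(n\ge3)$; the delicate work is to keep the signs of $W_n$ at $x^*$, at $-c/b$, and at the lone root near $y^*$ mutually consistent throughout the induction, and to check by hand the finitely many small $n$ for which the monotonicity of $n\theta-\phi$, or the exact count of roots below $x^*$, has not yet taken hold.
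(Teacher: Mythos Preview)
Your route is genuinely different from the paper's. The paper first normalises via the substitution $\tilde W_n(x)=W_n(x/t+r)$, reducing Theorem~\ref{Mthm:main:01} to the special case $W_1(x)=x$ (Theorem~\ref{Mthm:main:02}), and then proves that special case in the opposite order from yours: real-rootedness and interlacing come \emph{first}, by an induction (Theorem~\ref{thm:criterion:interlacing:01}, Lemma~\ref{lem:k=0:01}, Theorem~\ref{thm:RR:01}) that never looks at the closed form; the trigonometric and exponential representations from Lemma~\ref{lem:00} are invoked only afterwards, to locate the limit points $x_\Delta$, $x_g$, $-\infty$ (Section~\ref{sec:limpts}). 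You instead attack the general $t,r$ directly and make the phase equation $n\theta(x)-\phi(x)\in\tfrac\pi2+\pi\mathbb Z$ carry the real-rootedness, postponing interlacing to a final sweep. Your way is more explicit about \emph{where} the roots sit; the paper's way is more robust because the interlacing induction needs no asymptotics near $x^*$.

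There is, however, a real gap in your counting step. The monotonicity of $n\theta-\phi$ on $(-\infty,x^*)$ is not uniform in the parameters: near $x^*$ both $\theta'$ and $\phi'$ blow up like $(-\Delta)^{-1/2}$, and one finds $(n\theta-\phi)'<0$ there only once $n>n_0:=a/\bigl(a-2t(x^*-r)\bigr)$. In case~(iii), and in case~(ii) when $r\ge x^*$, this gives $n_0<1$ and all is well; but for $r\in(r^*,x^*)$ one has $n_0\to+\infty$ as $r\to r^{*+}$, so ``check by hand the finitely many small $n$'' is not a finite check. Worse, your assertion ``exactly $d_n-1$ zeros when $r>r^*$'' is actually false for $n<n_0$: there $n\theta-\phi$ approaches $\pi/2$ from \emph{below} as $x\to x^{*-}$, so the level $k=0$ is crossed and one gets $d_n$ zeros in $(-\infty,x^*)$, not $d_n-1$ --- exactly as in the paper's Theorem~\ref{thm:bound}(ii), where the largest root stays below $x_\Delta$ for $n<n_0$ and only enters $(x_\Delta,x_g)$ afterwards.

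The repair is simple and makes your argument cleaner: abandon monotonicity and use only the intermediate value theorem plus the degree bound. The endpoint limits of $n\theta-\phi$ give \emph{at least} $d_n$ (case~(i)) or $d_n-1$ (cases~(ii),(iii)) simple zeros in $(-\infty,x^*)$; then the exact values
\[
W_n(x^*)=\Bigl(1+\tfrac{n(2t(x^*-r)-a)}{a}\Bigr)\Bigl(\tfrac a2\Bigr)^n,
\qquad
W_n(y^*)=\Bigl(\tfrac{a-\sqrt{\Delta(y^*)}}{2}\Bigr)^n
\]
(the latter strictly positive in~(ii), alternating in~(iii), since $\Delta(y^*)\lessgtr a^2$ according as $y^*\lessgtr -c/b$) control the sign on $[x^*,\infty)$ for \emph{every} $n$, not just large $n$, and the degree forces the count to close. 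With that amendment your plan goes through.
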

\smallskip

We observe that in the statement of Theorem \ref{Mthm:main:01}, the limit point $x^*$ does not depend on the initial polynomial $W_1(x)$, as long as the polynomial $W_1(x)$ is linear, and furthermore, no root lies in the interval $(x^*,\,-c/b)$ for case (iii).
\smallskip

\begin{definition}\label{def:W:01}
Let $\{W_n(x)\}=\{W_n(x)\}_{n\ge0}$ be the polynomial sequence defined recursively by
\begin{equation}\label{rec:01}
W_n(x)=aW_{n-1}(x)+(bx+c)W_{n-2}(x),
\end{equation}
with $W_0(x)=1$ and $W_1(x)=x$,
where $a,b>0$ and $c\neq0$.
In this context, we say $\{W_n(x)\}$ is a {\em $(0,1)$-sequence of polynomials}.
\end{definition}
\smallskip

\begin{thm}\label{Mthm:main:02}
Let $\{W_n(x)\}_{n\ge0}$ be a $(0,1)$-sequence of polynomials. Then the polynomial $W_n(x)$ (of degree $d_n= \lfloor(n+1)/2\rfloor$) is distinct-real-rooted.  Let
\begin{equation} \label{eq:x*r*y*}
x^*=-{4c+a^2\over 4b},\quad  r^*=x^*-{a\over 2}, \rmand y^*={a+b-\sqrt{(a+b)^2+4c}\over 2}.
\end{equation}
Let $R_n=\{\xi_{n,1},\ldots,\xi_{n,d_n}\}$ be the ordered zero-set of $W_n(x)$.  
\begin{itemize}
\item[(i)]
If $r^*\geq0$,
then $R_{n+1}\rtimes R_n$
and $R_{n+2}\bowtie R_n$ for $n\ge1$;
$\xi_{n,\,d_n-i}\nearrow x^*$ for any fixed $i\ge0$.

\item[(ii)]
If $0\in(r^*,-c/b)$ then $R_{n+1}\rtimes R_n$
and $R_{n+2}\bowtie R_n$ for $n\ge1$;
$\xi_{n,\,d_n-i}\nearrow x^*$ for any fixed $i\ge1$; and $\xi_{n,d_n}\nearrow y^*$ with $x^*<y^*$.

\item[(iii)]
If $c>0$ then
$R_{n+1}'\rtimes R_n'$
and $R_{n+2}'\bowtie R_n'$ for $n\ge3$;
$\xi_{n,d_n-i}\nearrow x^*$ for any fixed  $i\ge1$;
$\xi_{2n,d_{2n}}\nearrow y^*$ and $\xi_{2n-1,d_{2n-1}}\searrow y^*$ with $x^*<-c/b<y^*<x_{2,d_2}$.
\end{itemize}
For these three cases, the respective best bounds for the set $\cup_{n\ge1}R_n$ are $(-\infty,\,x^*)$, $(-\infty,\,y^*)$, and $(-\infty,\,r)$.  Moreover, the sequence $\xi_{n,i}$ converges to $-\infty$ for any fixed $i\ge1$.
\end{thm}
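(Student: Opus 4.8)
The plan is to deduce Theorem~\ref{Mthm:main:02} from Theorem~\ref{Mthm:main:01} by specialization. A $(0,1)$-sequence of polynomials, in the sense of Definition~\ref{def:W:01}, is exactly the sequence treated in Theorem~\ref{Mthm:main:01} with the parameters $t=1$ and $r=0$ (so that $W_1(x)=x=1\cdot(x-0)$), together with the standing hypotheses $a,b>0$ and $c\neq0$. First I would check that these parameter choices satisfy the hypotheses of Theorem~\ref{Mthm:main:01}: we need $a,b,t>0$, which holds since $t=1$, and $c,r\in\R$, which is clear, and finally $r\neq-c/b$, which here reads $0\neq-c/b$, i.e.\ $c\neq0$ --- precisely the hypothesis imposed in Definition~\ref{def:W:01}. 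Substituting $t=1$ into the formulas of Theorem~\ref{Mthm:main:01} gives $x^*=-(4c+a^2)/(4b)$, $r^*=x^*-a/2$, and $y^*=0+\bigl((a+b)-\sqrt{(a+b)^2+4(b\cdot0+c)}\bigr)/2=\bigl(a+b-\sqrt{(a+b)^2+4c}\bigr)/2$, which match~\eqref{eq:x*r*y*} exactly.

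Next I would translate the three case hypotheses. Theorem~\ref{Mthm:main:01} splits according to where $r$ lies relative to $r^*$ and $-c/b$; since here $r=0$, the trichotomy $r\in(-\infty,r^*]$, $r\in(r^*,-c/b)$, $r\in(-c/b,+\infty)$ becomes $0\le r^*$ (equivalently $r^*\ge0$), $0\in(r^*,-c/b)$, and $0>-c/b$. The last of these, $-c/b<0$, is equivalent to $c>0$ since $b>0$; so the three cases are exactly the cases (i), (ii), (iii) as stated in Theorem~\ref{Mthm:main:02}. It then remains only to observe that these three cases are exhaustive and mutually exclusive, which is immediate because $r^*=x^*-a/2<x^*$ and $x^*<-c/b\iff-(4c+a^2)/(4b)<-c/b\iff a^2>0$, so one always has $r^*<x^*<-c/b$; hence exactly one of ``$0\le r^*$'', ``$r^*<0<-c/b$'', ``$-c/b\le0$'' holds (noting $0\ne-c/b$ rules out the boundary in the third, and $0=r^*$ is absorbed into case (i)).

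With the dictionary in place, each clause of Theorem~\ref{Mthm:main:02} --- the degree formula $d_n=\lfloor(n+1)/2\rfloor$, distinct-real-rootedness, the interlacing relations $R_{n+1}\rtimes R_n$ and $R_{n+2}\bowtie R_n$ (and their primed versions in case (iii)), the monotone convergence statements $\xi_{n,d_n-i}\nearrow x^*$, $\xi_{n,d_n}\nearrow y^*$, $\xi_{2n,d_{2n}}\nearrow y^*$, $\xi_{2n-1,d_{2n-1}}\searrow y^*$, the inequalities $x^*<y^*$ and $x^*<-c/b<y^*<x_{2,d_2}$, the best bounds $(-\infty,x^*)$, $(-\infty,y^*)$, $(-\infty,r)$ for $\cup_{n\ge1}R_n$, and the divergence $\xi_{n,i}\to-\infty$ --- is simply the corresponding clause of Theorem~\ref{Mthm:main:01} read off under the substitution $t=1$, $r=0$. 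I would present the argument as a short paragraph making the substitution explicit and then stating that every assertion follows verbatim.

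The main (and essentially only) obstacle is bookkeeping: one must be careful that the best bound $(-\infty,r)$ in case (iii) really does become $(-\infty,0)$ after setting $r=0$, and that the notation $x_{2,d_2}$ appearing in the inequality chain and the primed zero-sets $R_n'$ in case (iii) are used consistently with their meaning in Theorem~\ref{Mthm:main:01}; there is no new mathematical content to supply beyond verifying that the hypothesis $r\ne-c/b$ of Theorem~\ref{Mthm:main:01} is equivalent, after the specialization, to the hypothesis $c\ne0$ of Definition~\ref{def:W:01}, which I did above. So the proof is a one-line corollary-style deduction, and I would write it accordingly.
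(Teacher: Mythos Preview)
Your specialization argument is formally correct as a deduction, but it runs the logical dependence in the paper backwards and is therefore circular. In the paper, Theorem~\ref{Mthm:main:01} is \emph{not} proved independently and then specialized; immediately after stating Theorem~\ref{Mthm:main:02} the paper writes ``Now we explain how Theorem~\ref{Mthm:main:01} can be obtained as a corollary of Theorem~\ref{Mthm:main:02},'' and reduces the general $t,r$ to the normalized case $t=1$, $r=0$ via the linear change of variables~\eqref{LinearTr}. Thus Theorem~\ref{Mthm:main:01} rests on Theorem~\ref{Mthm:main:02}, not the other way around, and your proposal assumes exactly the result it is supposed to establish.

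The paper's actual proof of Theorem~\ref{Mthm:main:02} is the substantive work carried out in Sections~3--5: the degree and leading-coefficient computation (Lemma~\ref{lem:deg:01}), the interlacing criterion (Theorem~\ref{thm:criterion:interlacing:01}) and its corollary Lemma~\ref{lem:k=0:01}, the sign/ordering analysis at the special points $x_B,x_\Delta,x_g$ (Lemma~\ref{lem:order:01}), the distinct-real-rootedness and interlacing (Theorem~\ref{thm:RR:01}), the sharp bounds (Theorem~\ref{thm:bound}), and the three limit-point results (Theorems~\ref{thm:lim:xg}, \ref{thm:lim:xd}, \ref{thm:lim:-infty}). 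Your one-line deduction supplies none of this content. If you want a short proof, you must either cite those results directly and assemble them (noting $x^*=x_\Delta$, $y^*=x_g$, and that the case split $r^*\ge0$, $0\in(r^*,-c/b)$, $c>0$ corresponds exactly to $c\le-(a^2+2ab)/4$, $-(a^2+2ab)/4<c<0$, $c>0$), or else give an independent proof of Theorem~\ref{Mthm:main:01} that does not go through Theorem~\ref{Mthm:main:02}.
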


Now we explain how Theorem \ref{Mthm:main:01} can be obtained as a corollary of Theorem  \ref{Mthm:main:02}.
When considering the root geometry problem of general recursive polynomial sequences of type~$(0,1)$, it is acceptable to assume that $\deg W_0(x)\leq \deg W_1(x)$ and that the polynomial $W_0(x)$ is monic, which implies that $W_0(x)=1$.
Note that if $a<0$, then  when considering the polynomial sequence
\begin{equation}\label{-Tr}
\tW_n(x)=(-1)^nW_n(-x).
\end{equation}
it is routine to verify that $\tW_0(x)=1$, $\tW_1(x)=x$,
and $\tW_n(x)=-a\tW_{n-1}(x)+(-bx+c)\tW_{n-2}(x)$.
Consequently, supposing that $a>0$ is without loss of generality in regard to the root geometry.

We now explain why it is enough to prove Theorem \ref{Mthm:main:02} for only the case in which 
\begin{equation}
(i)\, W_1(x)=x, \qquad
(ii)\, c\ne0,\qquad
(iii)\, b>0.
\end{equation}
\begin{itemize}
\item[(i)] Here the linear polynomial $W_1(x)$ can be supposed to have the form $t(x-r)$.
We can always normalize the polynomials by the linear transformation
\begin{equation}\label{LinearTr}
\tW_n(x)\=W_n(x/t+r),
\end{equation}
whose root geometry differs from that of the sequence $W_n(x)$ only by magnification and translation.

\item[(ii)] If $c=0$, then the number $0$ is a root of every polynomial $W_n(x)$.
In this circumstance, one may consider the polynomials $\tW_n(x)$ defined by the rule
$$\tW_n(x)\={W_{n+2}(x)\over W_2(x)}.$$
It is clear that $\{\tW_n(x)\}_{n\ge0}$ satisfies the recursion $\tW_n(x)=a\tW_{n-1}(x)+(bx+c)\tW_{n-2}(x)$.
Therefore, the condition $c\neq 0$ is not really restrictive.

\item[(iii)]  The case in which $b<0$ is unexplored.
In fact, when $b<0$,
both the degrees and the leading coefficients of the polynomials $W_n(x)$ may vary irregularly.
We also note that dropping Condition~(iii)  may yield non-real-rooted polynomials $W_n(x)$.
For example, when $a=1$, $b=-1$, $c=-1$,
we have $W_3(x)=-x^2-x-1$, which has no real roots.
\end{itemize}

We remark that in a general setting, beyond the genus polynomials of graphs, the polynomials $W_n(x)$ might have negative coefficients. In summary, this study of the root geometry of recursive polynomials of type $(0,1)$ has only two restrictions. One is that the polynomial $W_0(x)$ is a constant. The other is the assumption that the number $b$ is positive.
\medskip

\subsection{Some examples}

We now present several examples to illustrate our results.

\begin{eg}\label{eg:Fib}
One kind of sequence of Fibonacci polynomials $W_n(x)$ is defined by the recursion
\begin{equation}\label{rec:Fib}
W_n(x)\=W_{n-1}(x)+xW_{n-2}(x),
\end{equation}
where $W_0(x)=1$ and $W_1(x)=x+1$; see~\cite[Table 3]{MSV02} and~\cite[A011973]{OEIS}.  Accordingly, $a=b=1$, $c=0$, and $r=-1$, and we compute from Equation \eqref{eq:x*r*y*} that 
$$x^*=-{4c+a^2\over 4b} = -\frac{1}{4} \rmand  r^*=x^*-{a\over 2} = -\frac{1}{4}-\frac{1}{2} =-\frac{3}{4} > -1 = r .$$
By Theorem~\ref{Mthm:main:01}(i), we know that each polynomial $W_n(x)$ is distinct-real-rooted and that all roots are less than $-1/4$.  Also, for any $\epsilon>0$, there exists a number $M'>0$ such that every polynomial $W_n(x)$ with $n>M'$ has a root in the interval $(-1/4-\epsilon,\,-1/4)$.  Moreover, by the final conclusion of Theorem~\ref{Mthm:main:01}, we know that for any $N>0$, there exists a number $M>0$ such that every polynomial $W_n(x)$ with $n>M$ has a root less than $-N$. 
\end{eg}

In the next two examples, we examine how the set of convergent points is affected when we change the coefficient of  $W_{n-2}(x)$ in Recursion~(\ref{rec:Fib}) first to $2x/5$ and then to $x+2$. 
\begin{eg}
Let $W_n(x)$ be the polynomial sequence defined by the recursion 
$$W_n(x)\=W_{n-1}(x)+{2x\over 5}W_{n-2}(x),$$ 
with initial values $W_0(x)=1$ and $W_1(x)=x+1$.  We see that  $a=1$, $b=2/5$, $c=0$, and $r=-1$.  We calculate from Equation \eqref{eq:x*r*y*} that 
$$x^*=-{4c+a^2\over 4b}=-\frac{5}{8}<-\frac{3}{5}=y^* \rmand r=-1\in\left(-\frac{9}{8},\,0\right)=\left(r^*,\, -\frac{c}{b}\right).$$  
By Theorem~\ref{Mthm:main:01}, the polynomial $W_n(x)$ is distinct-real-rooted, and the largest root converges to $-3/5$ increasingly. Moreover, for any positive integer $i$, the root sequence $x_{n,\,d_n-i}$ converges to $-5/8$ increasingly, and the root sequence $x_{n,\,i}$ converges to $-\infty$ decreasingly.
\end{eg}

\begin{eg}
Let $W_n(x)$ be the polynomial sequence defined by the recursion  
$$W_n(x)\=W_{n-1}(x)+(x+2)W_{n-2}(x),$$ 
with initial values  $W_0(x)=1$ and $W_1(x)=x+1$.  Thus, $a=b=1$, $c=2$, and $r=-1$.  We compute that $W_2(x)=2x+3$, and that 
$$x^*=-\frac{9}{4},\qquad r=-1>-2=-\frac{c}{b}, \rmand y^*=-\sqrt{2}.$$  
Therefore, we have $x^*<-c/b<x_{2,\,d_2}$. By Theorem~\ref{Mthm:main:01}, each of the polynomials~$W_n(x)$ is distinct-real-rooted, and has exactly one root larger than $-9/4$. The sequence of largest roots converges to $-\sqrt{2}$ oscillatingly.  Moreover, for any positive integer~$i$, the root sequence $x_{n,\,d_n-i}$ converges to $-9/4$ increasingly, and the root sequence $x_{n,\,i}$ converges to $-\infty$ decreasingly.
\end{eg}

\begin{eg}
This example illustrates how our results can be used to prove the real-rootedness of a sequence of partial genus polynomials. Let $D_n(x)$ be the polynomial sequence defined by the recursion $D_n(x) =2D_{n-1}(x)+8xD_{n-2}(x)$, with $D_0(x)=1$ and $D_1(x)=2x$, which may be recognized by those familiar with enumerative research in topological graph theory (for example, see \cite{FGS89,GF87,GRT89}) as a partial genus distribution for the closed-end ladder $L_n$, which is shown in Figure \ref{fig:L4}.
\begin{figure} [ht]
\centering
    \includegraphics[width=1.8in]{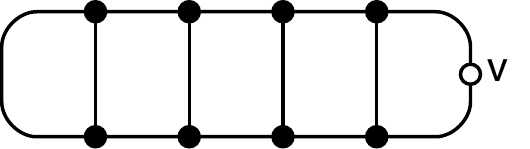} \vskip-6pt
\caption{The closed-end ladder $L_4$ with a 2-valent root-vertex $v$.}
\label{fig:L4}
\end{figure}

\noindent The polynomial $D_n(x)$ is the generating function for the number of cellular imbeddings of the ladder $L_n$ such that two different faces are incident on the root-vertex.  By Theorem \ref{Mthm:main:01}, each $D_n(x)$ is a distinct-real-rooted polynomial, and the root sequence $\xi_{n,\,d_n-i}$ converges to $-1/8$ for every nonnegative integer $i$. In particular, none of the polynomials $D_n(x)$ has a root larger than $-1/8$. Unfortunately, we do not yet know what topological information is implied by this convergent point.
\end{eg}

\bigskip  
\section{\large Distinct Real-Rootedness \label{thm:rr}}

The proof of Theorem \ref{Mthm:main:02} begins here with an investigation of the real-rootedness of a $(0,1)$-sequence of polynomials.  The remainder of the proof will be given in Section \ref{sec:bzs} and Section \ref{sec:limpts}. 

For any polynomial $f(x)$, we follow the usual definition that $f(\pm\infty)=\lim_{x\to\pm\infty}f(x)$.  We start our analysis of $(0,1)$-sequences  $\{W_n(x)\}_{n\ge0}$ by finding a formula for the degree and the leading coefficient of each of the polynomials.
\smallskip

\begin{lem} \label{lem:deg:01}
Let $\{W_n(x)\}_{n\ge0}$ be a $(0,1)$-sequence of polynomials, with the constant $b$ as in Definition~\ref{def:W:01}, and with $t_n$ the leading coefficient of $W_n(x)$.   Then
$$d_n=deg(W_n(x))=\dfl{\frac{n+1}{2}},\quad t_{2n+1}=b^{n}, \rmand t_{2n}=b^{n-1}(na+b).$$
Moreover, for all ${n\ge1}$, we have
\[
W_n(-\infty)(-1)^{d_n}>0
\rmand
W_n(+\infty)>0
\]
\end{lem}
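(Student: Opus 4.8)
The plan is to proceed by induction on $n$, first nailing down the degree and leading coefficient, and then reading off the two boundary-sign statements as immediate consequences. For the base cases one computes directly: $W_0(x)=1$, $W_1(x)=x$, and $W_2(x)=aW_1(x)+(bx+c)W_0(x)=(a+b)x+c$, which match $d_0=0$, $d_1=d_2=1$, $t_0=1$, $t_1=1$, $t_2=a+b$; note in particular that the formula $t_{2n}=b^{n-1}(na+b)$ correctly yields $t_0=b^{-1}\cdot b=1$ at $n=0$, so it may be used uniformly.

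For the inductive step, assume the degree and leading-coefficient formulas for $W_{n-1}$ and $W_{n-2}$ with $n\ge2$, and split according to the parity of $n$. If $n=2m+1$ is odd, then $\deg W_{2m}=m$ and $\deg W_{2m-1}=m$, so in Recursion~\eqref{rec:01} the term $aW_{n-1}$ has degree $m$ while $(bx+c)W_{n-2}$ has degree $m+1$; hence $\deg W_n=m+1=\lfloor(n+1)/2\rfloor$ and the leading coefficient is $b\,t_{2m-1}=b\cdot b^{m-1}=b^{m}$, as claimed. If $n=2m$ is even, then $\deg W_{2m-1}=m$ and $\deg W_{2m-2}=m-1$, so both $aW_{n-1}$ and $(bx+c)W_{n-2}$ contribute to the coefficient of $x^m$, which equals $a\,t_{2m-1}+b\,t_{2m-2}=ab^{m-1}+b^{m-1}\big((m-1)a+b\big)=b^{m-1}(ma+b)$. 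This is the one step where the hypotheses matter essentially: since $a,b>0$, the quantity $b^{m-1}(ma+b)$ is strictly positive, so the two degree-$m$ contributions cannot cancel; therefore $\deg W_n=m=\lfloor(n+1)/2\rfloor$ and $t_{2m}=b^{m-1}(ma+b)$.

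The same induction shows that $t_n>0$ for every $n\ge0$, because $b^{m}>0$ and $b^{m-1}(ma+b)>0$. The boundary-sign assertions then follow at once: since the leading term of $W_n(x)$ is $t_n x^{d_n}$ with $t_n>0$, we have $W_n(x)\to+\infty$ as $x\to+\infty$, so $W_n(+\infty)>0$; and $W_n(x)\to+\infty$ or $-\infty$ as $x\to-\infty$ according as $d_n$ is even or odd, so in either case $(-1)^{d_n}W_n(-\infty)>0$.

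I do not anticipate a serious obstacle here: the argument is a routine two-case induction, and the only place demanding any care is the even case, where one must check that the two equal-degree contributions do not cancel — and this is exactly what the positivity assumptions $a>0$ and $b>0$ guarantee.
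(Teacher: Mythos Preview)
Your proof is correct and follows essentially the same approach as the paper's own proof, which simply states that the degree and leading-coefficient formulas ``can be verified by induction on the integer $n$'' and then deduces the sign relations from $t_n>0$. You have merely filled in the details of that induction---including the key observation in the even case that $a,b>0$ prevents cancellation---which the paper leaves to the reader.
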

\begin{proof}
The formulas for the degree~$d_n$ and the leading coefficients $t_n$ can be verified by induction on the integer $n$.
For any polynomial $f(x)$ with positive leading coefficient, it is clear that
\[
f(-\infty)(-1)^{\deg f(x)}=+\infty
\rmand
f(+\infty)=+\infty.
\]
Since $t_n>0$, we infer that
$W_n(-\infty)(-1)^{d_n}=W_n(+\infty)=+\infty$.
The sign relations follow immediately.
\end{proof}
\smallskip

Using the intermediate value theorem for a $(0,1)$-sequence of polynomials, we derive the following criterion for their distinct-real-rootedness.
\smallskip

\begin{thm}\label{thm:criterion:interlacing:01}
Let $\{W_n(x)\}_{n\ge0}$ be a $(0,1)$-sequence of polynomials.  Let $d_n=deg(W_n(x))$,   and let $\beta\le -c/b$.  We denote the ordered zero-set of the polynomial $W_n(x)$ by $R_n$.  Suppose that for some numbers $m,k\in\mathbb{N}$, we define 
\begin{equation} \label{eq:defTm}
T_m = R_m\cap(-\infty,\,\beta) \rmand T_{m+1} = R_{m+1}\cap(-\infty,\,\beta),
\end{equation}
and suppose, further, that 
\begin{align}
\label{cond:RR:beta:01}
&W_m(\beta)(-1)^{k}\>0,\\
\label{cond:RR:m:01}
&|T_m|\=d_m-k,\\
\label{cond:RR:m+1:01}
&|T_{m+1}|\=d_{m+1}-k, \rmand\\
\label{cond:RR:rtimes}
&T_{m+1}\;\rtimes\;T_{m}.
\end{align}
Then there exists a set $T_{m+2}\subseteq R_{m+2}\cap (-\infty,\,\beta)$ such that $|T_{m+2}|=d_{m+2}-k$ and, furthermore, such that $T_{m+2}\rtimes T_{m+1}$.  Moreover, if
\begin{align}
\label{cond:RR:m+2:01}
T_{m+2}&\=R_{m+2}\cap (-\infty,\,\beta),
\end{align}
then we have $T_{m+2}\bowtie T_m$.
\end{thm}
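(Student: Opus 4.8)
The plan is to track the sign pattern of $W_{m+2}(x)$ at the points of $T_{m+1}$ and at $\beta$, then invoke the intermediate value theorem to place roots of $W_{m+2}$ in between, and finally argue that these are \emph{all} the roots below $\beta$ by a degree count. First I would fix notation: write $T_{m+1}=\{y_1<y_2<\cdots<y_q\}$ with $q=d_{m+1}-k$, and $T_m=\{x_1<x_2<\cdots<x_p\}$ with $p=d_m-k$. Since $d_{m+1}-d_m\in\{0,1\}$, the hypothesis $T_{m+1}\rtimes T_m$ means either $q=p$ (strict-right interlacing) or $q=p-1$ (the $\bowtie$ case), and in particular $|q-p|\le 1$, which is exactly the regime covered by Lemma~\ref{lem:fg:01}. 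The recursion $W_{m+2}(x)=aW_{m+1}(x)+(bx+c)W_m(x)$ evaluated at a root $y_j$ of $W_{m+1}$ gives $W_{m+2}(y_j)=(by_j+c)W_m(y_j)$; since $y_j<\beta\le -c/b$ and $b>0$ we have $by_j+c<0$, so $W_{m+2}(y_j)$ has the sign \emph{opposite} to that of $W_m(y_j)$.

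The key step is to read off the sign of $W_m(y_j)$ from the interlacing hypothesis. This is precisely what Lemma~\ref{lem:fg:01} delivers: applying inequality~\eqref{ineq:lem:f} with $f=W_m$, $g=W_{m+1}$, the roles $X'=T_m$, $Y'=T_{m+1}$ (so there $p\leftrightarrow s$, and the lemma's $p,q$ are our $p,q$), and using $W_m(\beta)\ne0$ — which follows from \eqref{cond:RR:beta:01} — we get $W_m(y_j)W_m(\beta)(-1)^{q-j}<0$ for all $j$ in the relevant range $[q+1-p,q+1]$, which includes all of $j\in[1,q]$ since $p\ge q$. Combining with the sign flip from $(by_j+c)<0$ and with \eqref{cond:RR:beta:01} in the form $W_m(\beta)(-1)^k>0$, I obtain a clean alternating sign pattern for $W_{m+2}$ along the sequence $y_1,\dots,y_q,\beta$: consecutive evaluations have opposite signs, and the sign at $\beta$ is pinned down by computing $W_{m+2}(\beta)=aW_{m+1}(\beta)+(b\beta+c)W_m(\beta)$ together with the analogous sign of $W_{m+1}(\beta)$ obtained from \eqref{ineq:lem:f} (or from Lemma~\ref{lem:deg:01} at $-\infty$). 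By the intermediate value theorem there is a root of $W_{m+2}$ strictly between each consecutive pair, giving at least $q$ roots in $(-\infty,\beta)$ distributed so that they interlace the $y_j$; handling the leftmost interval $(-\infty,y_1)$ requires the sign of $W_{m+2}(-\infty)$, which Lemma~\ref{lem:deg:01} supplies as $(-1)^{d_{m+2}}$ up to a positive factor, and a short parity check ($d_{m+2}-q$ determines whether an extra sign change, hence an extra root, is forced there) produces either $q$ or $q+1$ such roots — exactly $d_{m+2}-k$ many — with the pattern $T_{m+2}\rtimes T_{m+1}$.

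The main obstacle — and the step deserving the most care — is the final count: showing $W_{m+2}$ has \emph{no other} roots in $(-\infty,\beta)$, so that the set $T_{m+2}$ constructed by IVT really is all of $R_{m+2}\cap(-\infty,\beta)$, and then upgrading $T_{m+2}\rtimes T_{m+1}$ to $T_{m+2}\bowtie T_m$. For the count I would compare the number of sign changes of $W_{m+2}$ on $(-\infty,\beta)$ — which the above analysis pins at exactly $d_{m+2}-k$ — against $\deg W_{m+2}=d_{m+2}$, noting that the remaining $k$ roots (if real) must lie in $[\beta,\infty)$ because the premise $\beta\le -c/b$ was used to control the sign of $bx+c$; since a degree-$d_{m+2}$ polynomial has at most $d_{m+2}$ real roots and we have located $d_{m+2}-k$ of them below $\beta$ with the correct interlacing, any additional root below $\beta$ would force either an extra sign change (contradiction) or a double root (excluded once we know, inductively from the earlier part of the proof of Theorem~\ref{Mthm:main:02}, that $W_{m+2}$ is distinct-rooted — or, self-containedly, because a tangency would still violate the sign-change bookkeeping). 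Finally, for the $\bowtie$ conclusion under hypothesis \eqref{cond:RR:m+2:01}: once $|T_{m+2}|=d_{m+2}-k$ is known to be the full truncated zero-set and $T_{m+2}\rtimes T_{m+1}\rtimes T_m$ holds, the two relations $\rtimes$ compose, and a parity/cardinality argument using $d_{m+2}-d_m\in\{1,2\}$ — together with the observation that going up two steps in the recursion forces the outermost bar on the $X$-side at \emph{both} ends — yields $T_{m+2}\bowtie T_m$; I would verify this by checking the two sub-cases ($d_{m+1}=d_m$ vs.\ $d_{m+1}=d_m+1$) separately, each a brief inequality chase.
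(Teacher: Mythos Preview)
Your overall strategy for the first conclusion (construct roots of $W_{m+2}$ between consecutive points of $T_{m+1}$ via the recursion and IVT) matches the paper's, but several details are off. First, you have the roles in Lemma~\ref{lem:fg:01} reversed: the hypothesis is $T_{m+1}\rtimes T_m$, so one must take $f=W_{m+1}$, $g=W_m$, $X'=T_{m+1}$, $Y'=T_m$, and then use inequality~\eqref{ineq:lem:g} (not \eqref{ineq:lem:f}) to read off the sign of $W_m$ at the points of $T_{m+1}$. Your cardinality claim is also backwards: with your notation $q=|T_{m+1}|$, $p=|T_m|$, the relation $T_{m+1}\rtimes T_m$ forces $q\in\{p,p+1\}$, not $q\in\{p-1,p\}$. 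More importantly, you misread the statement: the theorem asks only for $T_{m+2}\subseteq R_{m+2}\cap(-\infty,\beta)$, not equality, so your ``main obstacle'' of ruling out extra roots below $\beta$ is simply not part of the claim. The equality is an \emph{additional hypothesis} \eqref{cond:RR:m+2:01} that one then assumes for the second conclusion.

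The genuine gap is your argument for $T_{m+2}\bowtie T_m$. The relations $T_{m+2}\rtimes T_{m+1}$ and $T_{m+1}\rtimes T_m$ do \emph{not} compose to $T_{m+2}\bowtie T_m$ by any parity/cardinality argument alone. For instance, take $T_m=\{0\}$, $T_{m+1}=\{1\}$, $T_{m+2}=\{\tfrac12,2\}$: both $\rtimes$ relations hold and $|T_{m+2}|=|T_m|+1$, yet $T_{m+2}\bowtie T_m$ fails because $\tfrac12>0$. The paper's argument instead evaluates $W_{m+2}$ directly at the points $y_j\in T_m$: since $W_m(y_j)=0$, the recursion gives $W_{m+2}(y_j)=aW_{m+1}(y_j)$, and the sign of $W_{m+1}(y_j)$ comes from Lemma~\ref{lem:fg:01} (inequality~\eqref{ineq:lem:f} with $f=W_{m+1}$). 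This yields sign alternation of $W_{m+2}$ along $y_0=-\infty,y_1,\dots,y_q,\beta$, producing by IVT a set of $q+1$ roots of $W_{m+2}$ in $(-\infty,\beta)$ that $\bowtie$-interlaces $T_m$; under hypothesis~\eqref{cond:RR:m+2:01} this set has the same cardinality as $T_{m+2}$ and hence coincides with it. That second application of the recursion at the points of $T_m$, not a combinatorial composition, is the missing idea.
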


\begin{proof}
By Conditions~(\ref{cond:RR:m:01}) and~(\ref{cond:RR:m+1:01}) of the premises, we can suppose that
\[
T_{m+1}\=\{x_1,\,x_2,\,\ldots,\,x_p\}
\rmand
T_m\=\{y_1,\,y_2,\,\ldots,\,y_q\}
\]
are ordered sets, where $p=d_{m+1}-k$ and $q=d_m-k$.  Definition~(\ref{eq:defTm}) implies that $x_p<\beta$.  In view of Condition~(\ref{cond:RR:rtimes}), together with the premise $\beta\leq-c/b$, we have the following ordering:
\begin{equation}\label{pf:ordering:01}
\cdots\<y_{q-2}\<x_{p-2}\<y_{q-1}\<x_{p-1}\<y_q\<x_p\<\beta\;\le\;-c/b.
\end{equation}
Note that Condition~(\ref{cond:RR:rtimes}) also implies that $p\ge1$ and $q\in\{p-1,\,p\}$.  For convenience, let
\[x_0=y_0=-\infty \rmand x_{p+1}=y_{q+1}=\beta.\]
By applying Lemma~\ref{lem:fg:01} with $f(x)=W_{m+1}(x)$ and $g(x)=W_m(x)$, we obtain that the zero-sets~$X$ and~$Y$ become~$R_{m+1}$ and~$R_m$ respectively. Consequently, by Definition~(\ref{eq:defTm}), we have
\[
X'=R_{m+1}\cap(-\infty,\,\beta)=T_{m+1}
\rmand
Y'=R_{m}\cap(-\infty,\,\beta)=T_{m}.
\]
From Inequality~(\ref{cond:RR:beta:01}) in the premises, we infer that
\begin{equation}\label{pf:ineq:61}
W_m(\beta)\ne0.
\end{equation}
Therefore, we can use Inequality~(\ref{ineq:lem:g}), which gives that
\begin{equation}\label{pf:ineq1}
W_m(x_{i})W_m(\beta)(-1)^{p-i}>0\all{i\in[p-q,\,p]}.
\end{equation}

Let $i\in[p]$.
Since $x_i\in T_{m+1}\subseteq R_{m+1}$, we have $W_{m+1}(x_i)=0$.
Taking $n=m+2$ and $x=x_i$ in Recursion (\ref{Mrec:W:01}), we see that
$W_{m+2}(x_{i})\=(bx_{i}+c)W_{m}(x_{i})$.
From (\ref{pf:ordering:01}), we see that $x_i<-c/b$.
Since $b>0$, we deduce that $bx_i+c<0$.
Thus, we can substitute $W_m(x_{i})=W_{m+2}(x_{i})/(bx_{i}+c)$ into Inequality~(\ref{pf:ineq1}), which gives that
\[
\frac{W_{m+2}(x_{i})}{bx_{i}+c}W_{m}(\beta)(-1)^{p-i}>0.
\]
Since $bx_i+c<0$, the above inequality can be reduced to
\begin{equation}\label{pf:ineq2}
W_{m+2}(x_{i})W_{m}(\beta)(-1)^{p-i}\<0.
\end{equation}
Note that Inequality~(\ref{pf:ineq2}) holds also for $i=p+1$.
Replacing $i$ by $i+1$ in Inequality~(\ref{pf:ineq2}) gives that
$W_{m+2}(x_{i+1})W_{m}(\beta)(-1)^{p-i}\>0$.
Multiplying it by Inequality~(\ref{pf:ineq2}), we obtain that
\[
W_{m+2}(x_i)W_{m+2}(x_{i+1})\<0.
\]
By the intermediate value theorem,
the polynomial $W_{m+2}(x)$ has a root in the interval $(x_{i},\,x_{i+1})$.
Let~$z_{i}$ be such a root.

When $i=1$, Inequality~(\ref{pf:ineq2}) is $W_{m+2}(x_{1})W_{m}(\beta)(-1)^{p-1}\<0$.
Multiplying it by Inequality~(\ref{cond:RR:beta:01}) gives that
$W_{m+2}(x_1)(-1)^{p-1+k}<0$.
Since $p=d_{m+1}-k$, the above inequality is $W_{m+2}(x_1)(-1)^{d_{m+1}}>0$.
On the other hand, Lemma~\ref{lem:deg:01} gives that $W_{m+2}(-\infty)(-1)^{d_{m+2}}>0$.
Since $d_{m+1}+d_{m+2}=m+2$, we obtain that
\[
W_{m+2}(-\infty)W_{m+2}(x_1)(-1)^{m+2}>0.
\]
Therefore, by the intermediate value theorem,
the polynomial $W_{m+2}(x)$ has a root in the interval $(-\infty,\,x_1)$ when $m$ is odd.
Let~$z_0$ be such a root. 

Define
\begin{equation}\label{def:Tm+2}
T_{m+2}=\begin{cases}
\{z_1,\,z_2,\,\ldots,\,z_p\},&\textrm{if $m$ is even};\\[3pt]
\{z_1,\,z_2,\,\ldots,\,z_p\}\cup\{z_{0}\},&\textrm{if $m$ is odd}.
\end{cases}
\end{equation}
We shall now show that this set $T_{m+2}$ has the desired properties.
\begin{itemize}
\smallskip
\item
For each $j\in[0,p]$, the number~$z_j$ is chosen to be a zero of the polynomial $W_{m+2}(x)$. Therefore, 
$T_{m+2}\subseteq R_{m+2}$.
\smallskip\item
For each $j\in[0,p]$, the number $z_j$ is chosen from the interval $(x_j,\,x_{j+1})$, which is contained in the interval $(-\infty,\,\beta)$. Therefore, 
$T_{m+2}\subset (-\infty,\,\beta)$.
\smallskip\item
From Definition~(\ref{def:Tm+2}), we see that
\begin{itemize}
\smallskip\item
if $m$ is even, then $|T_{m+2}|=p=d_{m+1}-k=(m+2)/2-k=d_{m+2}-k$;
\smallskip\item
if $m$ is odd, then $|T_{m+2}|=p+1=d_{m+1}-k+1=(m+3)/2-k=d_{m+2}-k$.
\end{itemize}
Hence, in any case, we have that $|T_{m+2}|=d_{m+2}-k$.
\smallskip\item
Since $z_j\in(x_j,\,x_{j+1})$ for all $j\in[0,p]$, we have $T_{m+2}\rtimes T_{m+1}$
according to Definition~\ref{def:interlacing:01}.
\end{itemize}

It remains to show that $T_{m+2}\bowtie T_m$.
By applying Lemma~\ref{lem:fg:01} with $f(x)=W_{m+2}(x)$ and with $g(x)=W_m(x)$, we obtain that
the zero-sets~$X$ and~$Y$ become~$R_{m+2}$ and~$R_m$ respectively.
Consequently, by Conditions~(\ref{cond:RR:m+2:01}) and~(\ref{cond:RR:m:01}) of the premises, we have
\[
X'=R_{m+2}\cap(-\infty,\,\beta)=T_{m+2}
\rmand
Y'=R_{m}\cap(-\infty,\,\beta)=T_{m}.
\]
Since $d_{m+2}=d_m+1$, the result $|T_{m+2}|=d_{m+2}-k$ and Condition~(\ref{cond:RR:m:01}) imply that
\[
|X'|=|T_{m+2}|=d_{m+2}-k=d_m+1-k=q+1=|T_m|+1=|Y'|+1.
\]
Therefore, the lower bound $|Y'|-|X'|+1$ of the range of the index~$j$ in Inequality~(\ref{ineq:lem:f}) is~$0$.
With the aid of Inequality~(\ref{pf:ineq:61}), we can use Inequality~(\ref{ineq:lem:f}), which gives that
\[
W_{m+2}(y_{j})W_{m+2}(\beta)(-1)^{q-j}<0\all{j\in[0,\,q+1]}.
\]
Let $j\in[q+1]$. Replacing $j$ by $j-1$ in the above inequality, we obtain that
\[
W_{m+2}(y_{j-1})W_{m+2}(\beta)(-1)^{q-j}>0.
\]
Multiplying the above two inequalities gives that
\begin{align}\label{dsr:bowtie1a2}
W_{m+2}(y_{j-1})W_{m+2}(y_{j})<0\all{j\in[q+1]}.
\end{align}
By the intermediate value theorem, we infer that the polynomial $W_{m+2}(x)$ has a root, say,~$w_j$,
in the interval $(y_{j-1},\,y_j)$, that is,
\[
-\infty\<w_1\<y_1\<w_2\<y_2\<\cdots\<w_q\<y_q\<w_{q+1}\<y_{q+1}=\beta.
\]
Define $T=\{w_1,\,w_2,\,\ldots,\,w_{q+1}\}$. Then the above chain of inequalities implies that $T\bowtie T_m$.
By the choice of the numbers $w_j$, we see that $w_j\in R_{m+2}$ and $w_j<\beta$.
It follows that $T\subseteq R_{m+2}\cap(-\infty,\,\beta)=T_{m+2}$.
Since $|T|=q+1=|T_{m+2}|$, we conclude that $T=T_{m+2}$.
Hence, the above interlacing relation $T\bowtie T_m$ becomes $T_{m+2}\bowtie T_m$, which completes the proof.
\end{proof}
\smallskip

The usage of this method of interlacing dates back at least to Harper~\cite{Har67}, who established the real-rootedness of the Bell polynomials in this way.  We should mention that Liu and Wang~\cite{LW07} have further applied this method to establish several easy-to-verify criteria for the real-rootedness of polynomials in which all the coefficients are non-negative.

We make two small preparations and a lemma before the further exploration of the root geometry,
which will be used in several proofs below. The polynomial $W_1(x)=x$ has the unique root $y_1=0$.
From Recursion~(\ref{Mrec:W:01}), it is direct to compute that the polynomial $W_2(x)=(a+b)x+c$,
which has the unique root $y_2=-c/(a+b)$. Therefore, the polynomials $W_1(x)$ and $W_2(x)$ are real-rooted, and
\begin{equation}\label{R1R2:01}
R_1=\{0\}
\rmand
R_2=\{-c/(a+b)\}.
\end{equation}
In the remainder of this section, we will use Theorem~\ref{thm:criterion:interlacing:01} frequently.  We will always set the constant~$k$ to be either $0$ or $1$.  The following corollary is for the particular case $k=0$.
\smallskip

\begin{lem}\label{lem:k=0:01}
Let $\{W_n(x)\}_{n\ge0}$ be a $(0,1)$-sequence of polynomials.
Denote the ordered zero-set of $W_n(x)$ by $R_n$.
Let $c<0$, and
\begin{equation}\label{cond:k=0:01}
-c/(a+b)<\beta\le -c/b.
\end{equation}
Let $N$ be a positive integer.
If $W_m(\beta)>0$ for all $m\in[N]$, then we have
\begin{align*}
R_m\subset (-\infty,\,\beta)&\all{m\in[N+2]},\\
R_{m+1}\rtimes R_m&\all{m\in[N+1]},\\
\text{and}\qquad R_{m+2}\bowtie R_m&\all{m\in [N]}.
\end{align*}
In particular, if $W_m(\beta)>0$ for all $m\ge 1$, then the above three relations hold for all $m\ge 1$.
\end{lem}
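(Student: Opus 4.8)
The plan is to run a single induction on $m$, at each stage applying Theorem~\ref{thm:criterion:interlacing:01} with $k=0$ and feeding its output back into its own hypotheses. For $1\le m\le N$ let $Q(m)$ be the assertion that $W_m(x)$ and $W_{m+1}(x)$ are distinct-real-rooted with all roots in $(-\infty,\beta)$ (equivalently $R_m\subset(-\infty,\beta)$ with $|R_m|=d_m$, and likewise for $m+1$) and that $R_{m+1}\rtimes R_m$. All the displayed conclusions will fall out once $Q(m)$ is known for every $m\in[N]$ together with the conclusion of Theorem~\ref{thm:criterion:interlacing:01} at each such $m$.

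First I would dispatch the base case $m=1$ using~\eqref{R1R2:01}: since $c<0$, the point $-c/(a+b)$ is positive and, by the left inequality in~\eqref{cond:k=0:01}, lies below $\beta$, while also $0<-c/(a+b)<\beta$. Hence $R_1=\{0\}$ and $R_2=\{-c/(a+b)\}$ both lie in $(-\infty,\beta)$, each has cardinality equal to its degree $d_1=d_2=1$, and $0<-c/(a+b)$ gives $R_2\rtimes R_1$; so $Q(1)$ holds. For the inductive step I would assume $Q(m)$ for some $m$ with $1\le m\le N$ and check that all four premises of Theorem~\ref{thm:criterion:interlacing:01} for the index $m$ and $k=0$ are met: $W_m(\beta)(-1)^0>0$ because $m\in[N]$; $|R_m\cap(-\infty,\beta)|=d_m$ and $|R_{m+1}\cap(-\infty,\beta)|=d_{m+1}$ by $Q(m)$; and $R_{m+1}\rtimes R_m$ is exactly~\eqref{cond:RR:rtimes} for the truncated sets, which here equal $R_m$, $R_{m+1}$; the hypothesis $\beta\le -c/b$ is part of~\eqref{cond:k=0:01}. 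The theorem then produces $T_{m+2}\subseteq R_{m+2}\cap(-\infty,\beta)$ with $|T_{m+2}|=d_{m+2}$ and $T_{m+2}\rtimes T_{m+1}$.

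The one step needing care is the bookkeeping that turns this into a statement about full zero-sets. Since $\deg W_{m+2}=d_{m+2}$ by Lemma~\ref{lem:deg:01}, a set of $d_{m+2}$ distinct roots of $W_{m+2}$ must be all of them; hence $R_{m+2}=T_{m+2}\subset(-\infty,\beta)$ and $W_{m+2}$ is distinct-real-rooted. In particular $T_{m+2}=R_{m+2}\cap(-\infty,\beta)$, so the ``moreover'' clause of Theorem~\ref{thm:criterion:interlacing:01} applies and gives $T_{m+2}\bowtie T_m$, i.e.\ $R_{m+2}\bowtie R_m$; and $T_{m+2}\rtimes T_{m+1}$ reads $R_{m+2}\rtimes R_{m+1}$. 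When $m+1\le N$ the facts ``$R_{m+1},R_{m+2}\subset(-\infty,\beta)$ distinct-real-rooted and $R_{m+2}\rtimes R_{m+1}$'' are precisely $Q(m+1)$, closing the induction, while the single application at $m=N$ additionally supplies the tail relations at level $N+2$.

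Finally I would collect the three conclusions: distinct-real-rootedness with all roots below $\beta$ at levels $1,2$ (base case) and $m+2$ for $m\in[N]$ (the applications) gives $R_m\subset(-\infty,\beta)$ for all $m\in[N+2]$; the relation $R_2\rtimes R_1$ together with $R_{m+2}\rtimes R_{m+1}$ for $m\in[N]$ gives $R_{m+1}\rtimes R_m$ for all $m\in[N+1]$; and $R_{m+2}\bowtie R_m$ for $m\in[N]$ is immediate from the applications. The ``in particular'' statement follows by letting $N$ range over all positive integers. There is no analytic obstacle here, as Theorem~\ref{thm:criterion:interlacing:01} already carries out the intermediate-value-theorem argument; the only delicate point is the degree-count promotion of the truncated interlacing relations to relations between the actual zero-sets.
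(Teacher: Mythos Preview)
Your proposal is correct and follows essentially the same route as the paper's own proof: both argue by induction on $m$, invoking Theorem~\ref{thm:criterion:interlacing:01} with $k=0$ at each step, then use the degree count $|T_{m+2}|=d_{m+2}=\deg W_{m+2}$ to promote $T_{m+2}$ to the full zero-set $R_{m+2}$ and thereby trigger the ``moreover'' clause yielding $R_{m+2}\bowtie R_m$. Your packaging via the predicate $Q(m)$ and the explicit collection of the three conclusions at the end is slightly tidier than the paper's presentation, but the underlying argument is identical.
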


\begin{proof}
First, we show the following relations by induction on the integer~$m$:
\begin{equation}\label{pf1:lem:01}
R_m\subset (-\infty,\,\beta),\qquad
R_{m+1}\subset (-\infty,\,\beta),
\rmand
R_{m+1}\rtimes R_m
\all{m\in[N+1]}.
\end{equation}

Recall from Formula~(\ref{R1R2:01}) that $R_1=\{0\}$ and $R_{2}=\{-c/(a+b)\}$.
When $m=1$, the relations in~(\ref{pf1:lem:01}) become
\[
R_1\subset (-\infty,\,\beta),\qquad
R_{2}\subset (-\infty,\,\beta),
\rmand
R_{2}\rtimes R_1,
\]
that is,
\[
0<\beta,\qquad
-c/(a+b)<\beta,
\rmand
0<-c/(a+b).
\]
Since $a,b>0$, the above relations hold by the negativity of the number~$c$ and Inequality~(\ref{cond:k=0:01}) in the premises.
Suppose that the relations in~(\ref{pf1:lem:01}) hold for $m\in [N]$, and we need to show them for $m+1$.

Let $k=0$. The upper bound $-c/b$ of the parameter $\beta$ is as same as that in Theorem~\ref{thm:criterion:interlacing:01}.
We are going to verify Conditions~(\ref{cond:RR:beta:01})--(\ref{cond:RR:rtimes}).
Since $k=0$, the inequality $W_m(\beta)>0$ in the premises is exactly Conditions~(\ref{cond:RR:beta:01}).
From Definition~(\ref{eq:defTm}) and the induction hypothesis $R_m\subset (-\infty,\,\beta)$, we infer that
$T_m=R_m\cap (-\infty,\,\beta)=R_m$.
It follows that $|T_m|=|R_m|=d_m$, i.e., Condition~(\ref{cond:RR:m:01}) holds.
Similarly, we have $T_{m+1}=R_{m+1}$, i.e., Condition~(\ref{cond:RR:m+1:01}) holds.
Thus, by the induction hypothesis we have that $R_{m+1}\rtimes R_m$, which is equivalent to Condition~(\ref{cond:RR:rtimes}).

Therefore, we can apply Theorem~\ref{thm:criterion:interlacing:01} and obtain the existence of
a set $T_{m+2}\subseteq R_{m+2}\cap(-\infty,\,\beta)$ such that $T_{m+2}\rtimes T_{m+1}$ and $|T_{m+2}|=d_{m+2}$.
Since the sets~$T_{m+2}$ and~$R_{m+2}$ have the same cardinality~$d_{m+2}$, we obtain that
\begin{equation}\label{pf65}
T_{m+2}=R_{m+2}.
\end{equation}
Consequently,
the result $T_{m+2}\subset (-\infty,\,\beta)$ becomes the desired relation
\begin{equation}\label{pf66}
R_{m+2}\subset (-\infty,\,\beta);
\end{equation}
and the result $T_{m+2}\rtimes T_{m+1}$ becomes the desired relation $R_{m+2}\rtimes R_{m+1}$.
This completes the induction proof for the relations in~(\ref{pf1:lem:01}).

By Equation~(\ref{pf65}) and Relation~(\ref{pf66}), we infer that $T_{m+2}=R_{m+2}\cap (-\infty,\,\beta)$,
which is exactly Condition~(\ref{cond:RR:m+2:01}).
Hence, by Theorem~\ref{thm:criterion:interlacing:01}, we derive that $T_{m+2}\bowtie T_m$,
i.e., $R_{m+2}\bowtie R_m$, which completes the proof.
\end{proof}

\eject

In order to continue with our discussions, we fix more parameters of $(0,1)$-sequences of polynomials. Inspired by Lemma \ref{lem:00}, we introduce the following notations. 
\smallskip

\begin{definition}\label{def:xg}
Let $\{W_n(x)\}_{n\ge0}$ be a $(0,1)$-sequence of polynomials. We define
\begin{align*}
\Delta(x)&\=a^2+4(bx+c)\=4bx+a^2+4c,\\[3pt]
g^\pm(x)&\=\bigl(2x-a\pm\sqrt{\Delta(x)}\,\bigr)/2\=\bigl(2x-a\pm\sqrt{4bx+a^2+4c}\,\bigr)/2,\\[5pt]
g(x)&\=g^-(x)g^+(x)\=x^2-(a+b)x-c.
\end{align*}
We denote the zeros of the functions $B(x)=bx+c$, $\Delta(x)$ and $g^+(x)$ by
\begin{equation}\label{def:xB:xd:xg}
x_B\=-\frac{c}{b},\quad
x_\Delta\=-{a^2+4c\over 4b}, \rmand
x_g\={(a+b)-\sqrt{(a+b)^2+4c}\over 2},
\end{equation}
respectively.  We also define
$$ n_0 = {2ab\over a^2+2ab+4c}. $$
We observe that Lemma~\ref{lem:00} implies the following:
\begin{align}
W_n(x_B)&\=a^{n-1}W_1(x_B), \label{W:xb}\\
W_n(x_\Delta)&\=\bgg{1+{n(2x_\Delta-a)\over a}}\bgg{{a\over 2}}^{\!\!n}, \mbox{ and } \label{W:xd}\\
W_n(x_g)&\=x_g^n.\label{W:xg}
\end{align}
\end{definition}
\smallskip

The following technical lemma provides the ordering among the numbers $x_\Delta$, $x_g$, $x_B$, and $0$, for the sake of determining the sign of the value $W_n(x)$ for specific numbers $x$ in the proofs of Theorem~\ref{thm:RR:01}.

\begin{lem}\label{lem:order:01}
Let $\{W_n(x)\}_{n\ge0}$ be a $(0,1)$-sequence of polynomials with parameters specified as in Definition \ref{def:xg}.
\begin{itemize}
\item[(i)]
If $\frac{4c}{a^2+2ab}\le-1$, then $W_n(x_\Delta)>0$ for all $n\ge1$.
\smallskip
\item[(ii)]
If $\frac{4c}{a^2+2ab}>-1$,then $x_g\in\R$, $x_\Delta<x_g$, and
\begin{equation}\label{eqrl:m0:01}
W_n(x_\Delta)\ge 0 \eqrl n\le n_0={2ab\over a^2+2ab+4c},
\end{equation}
where the equality on the left hand side holds if and only if the equality on the right hand side holds.
\noindent Moreover, if $c>0$, then $W_n(x_\Delta)<0$ and $x_B<x_g<0$;
otherwise, we have $0<x_g<x_B$.
\end{itemize}
\end{lem}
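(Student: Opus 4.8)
The plan is to prove the two parts of Lemma~\ref{lem:order:01} by translating each claimed inequality into an elementary algebraic statement about the parameters $a,b,c$ (with $a,b>0$), and then handling the sign conditions case by case. Throughout I would keep in mind the explicit formulas $x_\Delta=-(a^2+4c)/(4b)$, $x_B=-c/b$, and $x_g=\bigl((a+b)-\sqrt{(a+b)^2+4c}\bigr)/2$, together with Equations \eqref{W:xd} and \eqref{W:xg} from Definition~\ref{def:xg}, which express $W_n(x_\Delta)$ and $W_n(x_g)$ in closed form.

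For part~(i): by \eqref{W:xd} we have $W_n(x_\Delta)=\bigl(1+n(2x_\Delta-a)/a\bigr)(a/2)^n$, and since $a>0$ the factor $(a/2)^n$ is positive. So $W_n(x_\Delta)>0$ is equivalent to $1+n(2x_\Delta-a)/a>0$. First I would compute $2x_\Delta-a=-(a^2+4c)/(2b)-a=-(a^2+2ab+4c)/(2b)$, so that $n(2x_\Delta-a)/a=-n(a^2+2ab+4c)/(2ab)$. The hypothesis $\tfrac{4c}{a^2+2ab}\le-1$ rearranges to $a^2+2ab+4c\le 0$, hence $2x_\Delta-a\ge0$, and therefore $1+n(2x_\Delta-a)/a\ge1>0$ for every $n\ge1$. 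That disposes of (i).

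For part~(ii): the hypothesis $\tfrac{4c}{a^2+2ab}>-1$ is $a^2+2ab+4c>0$, so $n_0=2ab/(a^2+2ab+4c)$ is a well-defined positive real. The equivalence \eqref{eqrl:m0:01} follows from the same computation as above: $W_n(x_\Delta)\ge0 \iff 1+n(2x_\Delta-a)/a\ge0 \iff 1\ge n(a^2+2ab+4c)/(2ab) \iff n\le n_0$, with equality matching equality because the map $n\mapsto 1+n(2x_\Delta-a)/a$ is strictly decreasing (its slope $(2x_\Delta-a)/a=-(a^2+2ab+4c)/(2ab)$ is strictly negative under the hypothesis). For $x_g\in\R$ I would show $(a+b)^2+4c>0$; indeed $(a+b)^2+4c=a^2+2ab+4c+b^2>b^2>0$ under the hypothesis. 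For the inequality $x_\Delta<x_g$: writing both sides over suitable denominators, $x_g-x_\Delta=\bigl((a+b)-\sqrt{(a+b)^2+4c}\bigr)/2+(a^2+4c)/(4b)$; I would clear denominators and reduce the claim to $\sqrt{(a+b)^2+4c}<(a+b)+(a^2+4c)/(2b)=(2ab+a^2+2b^2+4c-\,?\,)$ — more carefully, to $2b\sqrt{(a+b)^2+4c}<2b(a+b)+(a^2+4c)$, and since both sides are positive (the right side equals $2ab+2b^2+a^2+4c=b^2+(a+b)^2+4c>0$), squaring and simplifying should reduce everything to $0<b^2(\,\cdots\,)$ or to the hypothesis $a^2+2ab+4c>0$ again; this is the one spot where I expect a small amount of unavoidable algebra. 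Finally, for the sign clauses: if $c>0$ then $a^2+2ab+4c>0$ automatically and $2x_\Delta-a=-(a^2+2ab+4c)/(2b)<0$ with $n_0=2ab/(a^2+2ab+4c)<2ab/(2ab)=1$, so for every integer $n\ge1$ we have $n>n_0$, hence $W_n(x_\Delta)<0$ by \eqref{eqrl:m0:01}; and $x_B=-c/b<0$, while $x_g<0$ follows since $\sqrt{(a+b)^2+4c}>\sqrt{(a+b)^2}=a+b$ when $c>0$, and $x_B<x_g$ follows from $g(x_B)=W_2$-type evaluation or directly since $g(x_B)=x_B^2-(a+b)x_B-c$ can be checked to be positive (equivalently $x_B$ lies left of both roots of $g$, because $g(0)=-c<0$ places $0$ between the roots $x_g<0<$ the other root, and $x_B<0$ with $g(x_B)>0$ forces $x_B<x_g$). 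If instead $c\le0$ (still under the hypothesis $a^2+2ab+4c>0$, so this is $-a^2/4\le -ab/2<\,$, i.e. $-(a^2+2ab)/4<c\le0$), then $x_B=-c/b\ge0$ and $x_g\ge0$; strict positivity $0<x_g<x_B$ when $c<0$ follows again from $g(0)=-c>0$ together with $g(x_B)>0$ and the location of the roots of $g$, while one notes $x_g=0=x_B$ exactly when $c=0$ — so I would state this clause under the implicit standing assumption $c\neq0$ from Definition~\ref{def:W:01}, giving $0<x_g<x_B$.

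The main obstacle is the inequality $x_\Delta<x_g$ and the ordering $x_g<x_B$ (resp.\ $x_B<x_g$): these involve the surd $\sqrt{(a+b)^2+4c}$ and so require a squaring argument with attention to signs of both sides before squaring. The cleanest route is probably to avoid squaring where possible by using $g(x)=x^2-(a+b)x-c$: since $x_g$ is the smaller root of $g$, an inequality $z<x_g$ is equivalent to the pair $g(z)>0$ and $z<(a+b)/2$, which converts every ordering claim into polynomial (surd-free) inequalities in $a,b,c$ that then reduce directly to the running hypothesis $a^2+2ab+4c>0$ or to $c\gtrless0$. I would organize part~(ii) around this observation to keep the algebra short.
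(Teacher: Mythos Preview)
Your plan for part~(i) and for the equivalence~\eqref{eqrl:m0:01} in part~(ii) is exactly what the paper does: both arguments pivot on \eqref{W:xd} and the computation $2x_\Delta-a=-(a^2+2ab+4c)/(2b)$, whose sign is governed by the hypothesis. Your verification that $x_g\in\R$ and $x_\Delta<x_g$ via the quadratic $g(x)=x^2-(a+b)x-c$ is a clean device the paper's appendix does not spell out (in fact $g(x_\Delta)=(a^2+2ab+4c)^2/(16b^2)>0$ together with $x_\Delta<(a+b)/2$ gives $x_\Delta<x_g$ immediately, just as you anticipate). For $c>0$ your argument that $g(x_B)=c(c+ab)/b^2>0$ and $x_B<0<(a+b)/2$ forces $x_B<x_g$ is correct and slightly slicker than the paper's direct squaring of $x_g-x_B$.

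There is one slip. In the clause $-(a^2+2ab)/4<c<0$ you assert $g(x_B)>0$ to conclude $x_g<x_B$, but $g(x_B)=c(c+ab)/b^2$ is \emph{negative} whenever $-ab<c<0$, and even when it is positive that would point toward $x_B<x_g$ or $x_B>x_g'$, not toward $x_g<x_B$. The $g$-approach still works, but the correct logical target is the disjunction ``$g(x_B)<0$ \emph{or} $x_B>(a+b)/2$'': the first alternative holds for $c>-ab$, the second for $c<-b(a+b)/2$, and the complementary range $-b(a+b)/2\le c\le -ab$ would force simultaneously $a\le b$ and (via $c>-(a^2+2ab)/4$) $a>2b$, a contradiction. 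This is essentially the paper's contradiction argument, repackaged through $g$; once you fix this step your proof is complete and matches the paper's in substance.
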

\begin{proof}
See Appendix \ref{prooflem:order:01}.
\end{proof}

Below is an example illustrating the cases $-(a^2+2ab)/4<c<0$ and $c>0$, respectively.

\begin{eg}\label{eg2:order}
Let $\{W_n(x)\}_{n\ge1}$ be a $(0,1)$-sequence of polynomials with parameters specified as in Definition \ref{def:xg}, and with  $a=b=1$ and $c=-1/2$; thus, we are in Case (ii), and  we have
$$W_n(x)=W_{n-1}(x)+(x-1/2)W_{n-2}(x),$$
with initial conditions $W_0(x)=1$ and $W_1(x)=x$. We observe that $-(a^2+2ab)/4<c<0$.  By Definition \ref{def:xg}, we have
$$x_\Delta = -\frac{a^2+4c}{4b} = -\frac{1-2}{4} = 1/4,$$
$x_g=1-\sqrt{2}/2$, $x_B=1/2$, and $n_0=2$.  Thus,  we may confirm the inequalities $x_\Delta<x_g$ and $0<x_g<x_B$. To illustrate that the sign of the value ${W_n(x_\Delta)}$ satisfies the left side of the equivalence relation~(\ref{eqrl:m0:01}), we calculate that $W_1(x_\Delta)=1/4>0$, $W_2(x_\Delta)=0$, and  $W_3(x_\Delta)=-1/16<0$. Continuing recursively, we see that $W_n(1/4)< 0$ for $n\ge3$.
\end{eg}
\smallskip

\begin{eg}\label{eg3:order}
Let $\{W_n(x)\}_{n\ge1}$ be a $(0,1)$-sequence of polynomials  with parameters specified as in Definition \ref{def:xg}, and with  $a=b=1$ and $c=1$, which puts us in Case (i); here we have $$W_n(x)=W_{n-1}(x)+(x+1)W_{n-2}(x),$$
with $W_0(x)=1$ and $W_1(x)=x$. This time, we have $c>0$. By Definition \ref{def:xg}, we have $x_\Delta=-5/4$, $x_g=1-\sqrt{2}$, $x_B=-1$, and $n_0=2/7$. These data correspond to the inequalities $x_B<x_g$ and $x_\Delta<x_g$ in the conclusion of Lemma \ref{lem:order:01}.  In fact, when $c>0$, we always have $n_0<1$, which implies that $W_n(x_\Delta)<0$.
\end{eg}
\smallskip

We are now ready to establish the real-rootedness of every polynomial $W_n(x)$.

\begin{thm}\label{thm:RR:01}
Let $\{W_n(x)\}_{n\ge1}$ be a $(0,1)$-sequence of polynomials with parameters specified as in Definition \ref{def:xg}. Then every polynomial $W_n(x)$ is distinct-real-rooted.  Moreover, let us denote the ordered zero-set of $W_n(x)$ by $R_n$, and let $y_n=\max R_n$ be the largest real root of the polynomial~$W_n(x)$. For all $n\ge 1$, we may conclude the following:
\begin{itemize}
\item[(i)]
if $c<0$, then $y_n<x_B$, $R_{n+1}\rtimes R_{n}$, and $\mR_{n+2}\bowtie R_{n}$.
\smallskip
\item[(ii)]
if $c>0$, then $y_n>x_B$, $R'_{n+1}\subset (-\infty,\,x_\Delta)$, $R'_{n+2}\rtimes \mR'_{n+1}$, and $R'_{n+2}\bowtie R'_{n}$, where $R'_n=R_n\setminus\{y_n\}$.
\end{itemize}
\end{thm}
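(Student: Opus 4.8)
The plan is to prove Theorem~\ref{thm:RR:01} by a single induction on $n$, feeding the inductive step into the interlacing machinery of Theorem~\ref{thm:criterion:interlacing:01} (with $k=0$ in case~(i) and $k=1$ in case~(ii)), and using Lemma~\ref{lem:order:01} together with the evaluation formulas \eqref{W:xb}--\eqref{W:xg} to certify the sign hypotheses. I would treat the two cases separately, since the choice of the ``cut point'' $\beta$ differs.

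First, case~(i): $c<0$. Here I want to apply Lemma~\ref{lem:k=0:01} directly, so the whole task reduces to checking its hypothesis, namely that $W_m(\beta)>0$ for all $m\ge1$ for some $\beta$ with $-c/(a+b)<\beta\le-c/b=x_B$. The natural choice is $\beta=x_B$: by \eqref{W:xb} we have $W_m(x_B)=a^{m-1}W_1(x_B)=a^{m-1}x_B=a^{m-1}(-c/b)>0$ since $c<0$ and $a,b>0$, and one checks $-c/(a+b)<-c/b$. Lemma~\ref{lem:k=0:01} then immediately yields $R_m\subset(-\infty,x_B)$ (hence $y_n<x_B$), $R_{m+1}\rtimes R_m$, and $R_{m+2}\bowtie R_m$ for all $m\ge1$; distinct-real-rootedness of each $W_n$ follows because the interlacing relations force $|R_n|=d_n$ with all roots real and simple. (One must also separately verify the base relations $R_1,R_2$ from \eqref{R1R2:01}, which is trivial.)

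Case~(ii): $c>0$. Now $x_B=-c/b<0$ and, by Lemma~\ref{lem:order:01}(ii) (since $c>0$ puts us in the regime $4c/(a^2+2ab)>-1$), we have $x_\Delta<x_g<x_B<0$ and $W_n(x_\Delta)<0$ for all $n\ge1$. The strategy is to peel off the single largest root $y_n$ of each $W_n$ and run the interlacing argument on $R'_n=R_n\setminus\{y_n\}$ with $k=1$ and cut point $\beta=x_\Delta$. The key sign input is $W_m(x_\Delta)(-1)^{1}>0$, i.e.\ $W_m(x_\Delta)<0$, which is exactly what Lemma~\ref{lem:order:01}(ii) gives. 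I would set up an induction showing, for $m\ge3$ (the first few indices handled by hand using $R_1=\{0\}$, $R_2=\{-c/(a+b)\}$, and a direct computation of $W_3$), that $W_m$ has exactly one root $y_m>x_\Delta$, that $R'_m=R_m\cap(-\infty,x_\Delta)\subset(-\infty,x_\Delta)$ with $|R'_m|=d_m-1$, and that $R'_{m+1}\rtimes R'_m$; then Theorem~\ref{thm:criterion:interlacing:01} produces $T_{m+2}\subseteq R_{m+2}\cap(-\infty,x_\Delta)$ with $|T_{m+2}|=d_{m+2}-1$ and $T_{m+2}\rtimes T_{m+1}$. To close the induction I must show $T_{m+2}=R'_{m+2}$, equivalently that $W_{m+2}$ has exactly one root in $[x_\Delta,\infty)$: the $d_{m+2}-1$ roots in $T_{m+2}$ account for all but one, and a sign/degree count at $+\infty$ (using $W_{m+2}(+\infty)>0$ from Lemma~\ref{lem:deg:01} and the sign of $W_{m+2}$ at the largest point of $T_{m+2}$) forces the remaining root $y_{m+2}$ to be real, simple, and $>x_\Delta$; then $\beta=x_\Delta=R_{m+2}\cap(-\infty,x_\Delta)$ meets \eqref{cond:RR:m+2:01}, so Theorem~\ref{thm:criterion:interlacing:01} also gives $R'_{m+2}\bowtie R'_m$. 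Finally $y_n>x_B$ must be extracted; since $x_\Delta<x_B$ and the unique root $>x_\Delta$ could a priori lie in $(x_\Delta,x_B)$, I would use the evaluation $W_n(x_B)=a^{n-1}x_B<0$ together with $W_n(x_\Delta)<0$ and the sign at $+\infty$ to locate $y_n$ to the right of $x_B$, checking parity/sign consistency via $d_n$.

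The main obstacle I anticipate is the bookkeeping in case~(ii): one has to simultaneously track (a) that exactly one root escapes above the cut, (b) that $R'_m$ genuinely equals $R_m\cap(-\infty,x_\Delta)$ rather than a proper subset, and (c) the precise parity of $d_m$ entering the sign conditions \eqref{cond:RR:beta:01}, all while the recursion only directly controls $W_{m+2}$ at the \emph{roots of $W_{m+1}$}. Getting the base cases $m=1,2,3$ to line up with the ``$m\ge3$'' hypotheses of the theorem statement (and reconciling the even/odd branching in \eqref{def:Tm+2} with the shifted index set $R'$) is where the argument is most delicate; the sign facts themselves are already packaged cleanly in Lemmas~\ref{lem:order:01} and~\ref{lem:deg:01}, so no new analytic idea is needed beyond the intermediate value theorem.
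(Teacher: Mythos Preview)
Your proposal is correct and follows essentially the same approach as the paper: case~(i) applies Lemma~\ref{lem:k=0:01} with $\beta=x_B$ exactly as the paper does, and case~(ii) runs Theorem~\ref{thm:criterion:interlacing:01} with $k=1$, $\beta=x_\Delta$ on the truncated sets $R'_n$, using $W_n(x_\Delta)<0$ from Lemma~\ref{lem:order:01}. The one organizational difference is that the paper establishes $y_n>x_B$ for \emph{all} $n\ge1$ at the very start of case~(ii) (from $W_n(x_B)=a^{n-1}x_B<0$ and $W_n(+\infty)>0$), \emph{before} the induction; since $x_\Delta<x_B$, this gives $y_{n+2}\notin(-\infty,x_\Delta)$ for free, so the closing identification $T_{n+2}=R'_{n+2}$ is pure cardinality and the bookkeeping obstacle you flag at the end evaporates. (Minor slip: for $c>0$, Lemma~\ref{lem:order:01}(ii) gives $x_B<x_g<0$, not $x_g<x_B$---but you never actually use $x_g$ in this proof.)
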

\begin{proof}
From Equation~(\ref{W:xb}), we see that
\begin{equation}\label{pf371}
cW_n(x_B)<0.
\end{equation}
Below we will show (i) and (ii) individually.

\medskip

\noindent{\bf (i)} Let $c<0$. Then Inequality~(\ref{pf371}) reduces to $W_n(x_B)>0$ for all $n\ge 1$.
Take $\beta=-c/b$. Then Condition~(\ref{cond:k=0:01}) holds trivially.
By Lemma~\ref{lem:k=0:01}, we deduce that $R_{n}\subset (-\infty,\,x_B)$,
 $R_{n+1}\rtimes R_{n}$ and $R_{n+2}\bowtie R_n$ for all $n\ge1$.

\medskip

\noindent{\bf (ii)} Let $c>0$. Then Inequality~(\ref{pf371}) implies that $W_n(x_B)<0$.
By Lemma \ref{lem:deg:01}, we have $W_n(+\infty)>0$.
Therefore, by the intermediate value theorem,
the polynomial $W_n(x)$ has a real root in this interval $(x_B,+\infty)$.
In particular, the largest root~$y_n$ is larger than~$x_B$.
Note that $x_\Delta=-(a^2+4c)/(4b)<-c/b=x_B$. Thus, we have
\begin{equation}\label{pf5}
x_\Delta<x_B<y_n\all{n\ge1}.
\end{equation}
For the remaining desired relations, it suffices to show the following:
\begin{equation}\label{dsr:thm39}
R_n'\subset(-\infty,\,x_\Delta),\quad
R_{n+1}'\subset(-\infty,\,x_\Delta),\quad
R_{n+1}'\rtimes R_n',\quad
R_{n+1}'\bowtie R_{n-1}',\all{n\ge2}.
\end{equation}

We proceed by induction on $n$. Consider $n=2$. Since $d_1=d_2=1$, we have $R_1'=R_2'=\emptyset$.
Since $a,b,c>0$, from Definition~(\ref{def:xB:xd:xg}), we have
\[
x_\Delta=-(a^2+4c)/(4b)<0.
\]
In view of Formula~(\ref{W:xd}), we deduce that
\begin{equation}\label{pf4}
W_n(x_\Delta)\=\bgg{1+{n(2x_\Delta-a)\over a}}\bgg{{a\over 2}}^n<0,\all{n\ge1}.
\end{equation}
In particular, we have $W_3(x_\Delta)<0$.
On the other hand, Lemma~\ref{lem:deg:01} gives that $W_3(-\infty)(-1)^{d_3}>0$.
Since $d_3=2$, it reduces to $W_3(-\infty)>0$.
Therefore, by the intermediate value theorem,
we infer that the polynomial $W_3(x)$ has a root, say,~$r_3$, in the interval $(-\infty,\,x_\Delta)$.
From Inequality~(\ref{pf5}), we see that $y_3>x_\Delta$, and thus, $r_3<x_\Delta<y_3$.
It follows that $R_3=\{r_3,\,y_3\}$, and thus, $R_3'=\{r_3\}$.
Therefore, the relations in~(\ref{dsr:thm39}) for $n=2$ are respectively
\[
\emptyset\subset(-\infty,\,x_\Delta),\qquad
\{r_3\}\subset(-\infty,\,x_\Delta),\qquad
\{r_3\}\rtimes \emptyset,\qquad
\{r_3\}\bowtie \emptyset,
\]
all of which hold trivially, except the second one holds since $r_3<x_\Delta$.
\smallskip

Suppose that the $4$ relations in~(\ref{dsr:thm39}) hold for some $n\ge2$, by induction, it suffices to show that
\begin{equation}\label{dsr:thm391}
R_{n+2}'\subset(-\infty,\,x_\Delta),\qquad
R_{n+2}'\rtimes R_{n+1}',
\rmand
R_{n+2}'\bowtie R_{n}'.
\end{equation}
In applying Theorem~\ref{thm:criterion:interlacing:01},
we set $k=1$, $\beta=x_\Delta$ and $m=n$. We shall verify Conditions~(\ref{cond:RR:beta:01})--(\ref{cond:RR:rtimes}).

\begin{itemize}
\smallskip\item
Inequality~(\ref{pf4}) checks the truth for Condition~(\ref{cond:RR:beta:01}).
\smallskip
\item
From Definition~(\ref{eq:defTm}), we have $T_n=R_n\cap(-\infty,\,x_\Delta)$.
Note that in the zero-set $R_n$, except the largest root $y_n$, which is not in the interval $(-\infty,\,x_\Delta)$ by~(\ref{pf5}),
all the other roots (whose union is the set~$R_n'$) are in the interval $(-\infty,\,x_\Delta)$ by Hypothesis~(\ref{dsr:thm39}).
Therefore, we infer that $R_n\cap(-\infty,\,x_\Delta)=R_n'$, and thus, $T_n=R_n'$.
It follows that $|T_n|=|R_n'|=d_n-1$, which verifies Condition~(\ref{cond:RR:m:01}).
\smallskip
\item
Similarly, we have $T_{n+1}=R_{n+1}'$,
and Condition~(\ref{cond:RR:m+1:01}) holds true.
\item
Consequently, the hypothesis $T_{n+1}\rtimes T_n$ in~(\ref{dsr:thm39}) can be rewritten as
$R_{n+1}'\rtimes R_n'$, which verifies Condition~(\ref{cond:RR:rtimes}).
\end{itemize}

By Theorem~\ref{thm:criterion:interlacing:01},
there exists a set $T_{n+2}\subseteq R_{n+2}\cap(-\infty,\,x_\Delta)$ such that $|T_{n+2}|=d_{n+2}-1$ and $T_{n+2}\rtimes T_{n+1}$.
From Inequality~(\ref{pf5}), we see that $y_{n+2}>x_\Delta$.
It follows that
\begin{equation}\label{pf70}
R_{n+2}\cap(-\infty,\,x_\Delta)=(R_{n+2}'\cup\{y_{n+2}\})\cap(-\infty,\,x_\Delta)\subseteq R_{n+2}'.
\end{equation}
Thus, we have $T_{n+2}\subseteq R_{n+2}'$.
Since the sets $T_{n+2}$ and $R_{n+2}'$ have the same cardinality $d_{n+2}-1$, 
we infer that $T_{n+2}=R_{n+2}'$.
Now, the result $T_{n+2}\subset (-\infty,\,x_\Delta)$ is one of the desired relations:
\begin{equation}\label{pf71}
R_{n+2}'\subset(-\infty,\,x_\Delta);
\end{equation}
the result $T_{n+2}\rtimes T_{n+1}$ is another one of the desired relations:
\[
R_{n+2}'\rtimes R_{n+1}'.
\]
In view of our goal~(\ref{dsr:thm391}), it suffices to show that $R_{n+2}'\bowtie R_{n}'$,
i.e., $T_{n+2}\bowtie T_n$. By Theorem~\ref{thm:criterion:interlacing:01}, it suffices to verify Condition~(\ref{cond:RR:m+2:01}),
i.e.,
\[
R_{n+2}'=R_{n+2}\cap(-\infty,\,x_\Delta).
\]
In view of Relation~(\ref{pf71}), we deduce that $R_{n+2}'\subseteq R_{n+2}\cap(-\infty,\,x_\Delta)$.
Together with Relation~(\ref{pf70}), we find the above equation, which completes the proof.
\end{proof}

Continuing Example~\ref{eg2:order} and Example~\ref{eg3:order},
we present the approximate values of zeros in the ordered set $R_n=\{\xi_{n,1},\ldots,\xi_{n,d_n}\}$.
\smallskip

\begin{eg}\label{eg2:zero}
This example continues Example~\ref{eg2:order}.
Table~\ref{tab:ii} illustrates that for $n\le8$, we have 
\[
y_n =\max \mR_n \< x_B \= 1/2, \qquad
R_{n+1}\;\rtimes\; R_{n}
\rmand
R_{n+2}\;\bowtie\; R_{n}.
\]
\begin{table}[h]
\begin{center}
\caption{The approximate zeros of $W_n(x)$ ($1\le n\le 8$) in Example~\ref{eg2:order}.}\label{tab:ii}
$\begin{tabu}{|c|c|c|c|c|}
\hline & \xi_{n,\,d_n-3} & \xi_{n,\,d_n-2} & \xi_{n,\,d_n-1} & \xi_{n,\,d_n}=y_n \\
\hline n=1 &  &  &  & 0 \\
\hline n=2 &  &  &  & 0.2500 \\
\hline n=3 &  &  & -1.7807 & 0.2807 \\
\hline n=4 &  &  & -0.2886 & 0.2886 \\
\hline n=5 &  & -4.2912 & 0 & 0.2912 \\
\hline n=6 &  & -1.0218 & 0.1046 & 0.2922 \\
\hline n=7 & -7.5833 & -0.3639 & 0.1547 & 0.2926 \\
\hline n=8 & -1.9561 & -0.1194 & 0.1827 & 0.2927 \\
\hline
\end{tabu}$
\end{center}
\end{table}
\noindent A more careful observation suggests that the second largest root $\xi_{n,\,d_n-1}$ is bounded by the number $x_\Delta=1/4$.  In fact, this is true in general; it motivates Theorem~\ref{thm:bound} below.
\end{eg}
\smallskip

\begin{eg}\label{eg3:zero}
This example continues Example~\ref{eg3:order}.
Table~\ref{tab:iii} illustrates that for $n\le8$,
\[
\xi_{n,\,d_n-1}\<x_\Delta\=-5/4
\qquad\rmand\qquad
y_n\>x_B\=-1.
\]
\begin{table}[h]
\begin{center}
\caption{The approximate zeros of $W_n(x)$ ($1\le n\le 8$) in Example~\ref{eg3:order}.}\label{tab:iii}
$\begin{tabu}{|c|c|c|c|c|}
\hline & \xi_{n,\,d_n-3} & \xi_{n,\,d_n-2} & \xi_{n,\,d_n-1} & \xi_{n,\,d_n} \\
\hline n=1 &  &  &  & 0 \\
\hline n=2 &  &  &  & -0.5000 \\
\hline n=3 &  &  & -2.6180 & -0.3819 \\
\hline n=4 &  &  & -1.5773 & -0.4226 \\
\hline n=5 &  & -5.1819 & -1.4064 & -0.4116 \\
\hline n=6 &  & -2.2405 & -1.3444 & -0.4149 \\
\hline n=7 & -8.5525 & -1.7194 & -1.3140 & -0.4139 \\
\hline n=8 & -3.1548 & -1.5342 & -1.2966 & -0.4142 \\
\hline
\end{tabu}$
\end{center}
\end{table}
\noindent A more careful observation suggests that the largest root $y_n$ converges to the point~$x_g$ in an oscillating manner, which is approximately $-0.4142$. In fact, this convergence is true in general; see Theorem~\ref{thm:bound} and Theorem~\ref{thm:lim:xd}.
\end{eg}

\bigskip
\section{\large Bound on the Zero-Set $R_n$ \label{sec:bzs}}

As consequence of the real-rootedness of the $(0,1)$-sequence polynomials $\{W_n(x)\}_{n\geq0}$, we improve the bound of the zero-set $R_n$ of $W_n(x)$.

\begin{thm}\label{thm:bound}
Let $\{W_n(x)\}_{n\ge0}$ be a $(0,1)$-sequence of polynomials.
Let us denote the ordered zero-set of $W_n(x)$ by $R_n$, and we let $R_n'=R_n\setminus\{y_n\}$, where $y_n=\max R_n$ is the largest real root of the polynomial~$W_n(x)$.
\begin{itemize}
\medskip\item[(i)]
If $c\le-(a^2+2ab)/4$, then $R_n\subset (-\infty,x_\Delta)$ for all $n\ge 1$.
\medskip\item[(ii)]
If $-(a^2+2ab)/4<c<0$, then we have
\begin{itemize}
\smallskip\item
$R_n\subset  (-\infty,x_\Delta)$, for $n<n_0$;
\smallskip\item
$R_n'\subset  (-\infty,x_\Delta)$ and $y_n=x_\Delta$, for $n=n_0$;
\smallskip\item
$R_n'\subset  (-\infty,x_\Delta)$ and $y_n\in(x_\Delta,\,x_g)$, for $n>n_0$;
\end{itemize}
\smallskip
\medskip\item[(iii)]
If $c>0$, then we have $R_n'\subset  (-\infty,x_\Delta)$, and
\begin{equation}\label{ineq:osc}
x_B<y_2<y_4<y_6<
\cdots<y_{2n}<\cdots<x_g<\cdots<y_{2n-1}<\cdots<y_5<y_3<y_1=0.
\end{equation}
\end{itemize}
\end{thm}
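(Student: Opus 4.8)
The plan is to run everything off one elementary fact, applied repeatedly: if $f$ is distinct-real-rooted with positive leading coefficient and $f(\beta)\ne0$, then the number of roots of $f$ exceeding $\beta$ is even when $f(\beta)>0$ and odd when $f(\beta)<0$, and $f$ changes sign between consecutive roots. I would feed this the sign data for $W_n$ at $x_\Delta$, at $x_B=-c/b$, and at $x_g$, as supplied by Lemma~\ref{lem:order:01} together with the evaluations \eqref{W:xb}--\eqref{W:xg}, and combine it with the distinct-real-rootedness and the interlacings $R_{n+1}\rtimes R_n$, $R_{n+2}\bowtie R_n$ of Theorem~\ref{thm:RR:01}. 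One uses throughout that $x_B-x_\Delta=a^2/(4b)>0$ and $bx_\Delta+c=-a^2/4$. Part~(i) is then immediate: its hypothesis is exactly $4c/(a^2+2ab)\le-1$, so Lemma~\ref{lem:order:01}(i) gives $W_n(x_\Delta)>0$ for all $n$, and since $x_\Delta+c/(a+b)=-a(a^2+ab+4c)/(4b(a+b))$ with $a^2+ab+4c\le-ab<0$ we get $-c/(a+b)<x_\Delta\le-c/b$; hence Lemma~\ref{lem:k=0:01} applies with $\beta=x_\Delta$ and yields $R_n\subset(-\infty,x_\Delta)$ for all $n\ge1$.

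For part~(ii) we have $-1<4c/(a^2+2ab)<0$, so Lemma~\ref{lem:order:01}(ii) gives $x_\Delta<x_g$, $0<x_g<x_B$, and $W_n(x_\Delta)\ge0\Leftrightarrow n\le n_0$ with equality only at $n=n_0$. I would prove by induction on $n$ the combined claim: the second-largest root of $W_n$ is $<x_\Delta$, and $y_n<x_\Delta$, $y_n=x_\Delta$, $y_n>x_\Delta$ according as $n<n_0$, $n=n_0$, $n>n_0$. The inductive step rests on $R_n\bowtie R_{n-2}$: this interlacing forces the third-largest root of $W_n$ below the second-largest root of $W_{n-2}$, hence below $x_\Delta$ by the induction hypothesis, so at most the two largest roots of $W_n$ can be $\ge x_\Delta$; the sign of $W_n(x_\Delta)$ then closes each regime via the parity fact — even ($n<n_0$) forces no root above $x_\Delta$, odd ($n>n_0$) forces exactly one, namely $y_n$, and $W_n(x_\Delta)=0$ ($n=n_0$) forces $x_\Delta$, being a root above the second-largest root, to equal $y_n$. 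The indices $n=1,2$ (with $R_1=\{0\}$, $R_2=\{-c/(a+b)\}$) are base cases, the claim there amounting to $x_\Delta>0\Leftrightarrow n_0>1$ and $x_\Delta>-c/(a+b)\Leftrightarrow n_0>2$. Finally, for $n>n_0$ the values $W_n(x_\Delta)<0<x_g^{\,n}=W_n(x_g)$ (using $x_g>0$) together with $x_\Delta<x_g$ locate the unique root of $W_n$ above $x_\Delta$, namely $y_n$, inside $(x_\Delta,x_g)$.

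For part~(iii) we have $4c/(a^2+2ab)>0>-1$, so Lemma~\ref{lem:order:01}(ii) gives $x_\Delta<x_g$, $x_B<x_g<0$, and $W_n(x_\Delta)<0$ for every $n$, while $R_n'\subset(-\infty,x_\Delta)$ is exactly Theorem~\ref{thm:RR:01}(ii). Hence each $W_n$ has a unique root above $x_\Delta$, namely $y_n$, so $W_n<0$ on $(x_\Delta,y_n)$ and $W_n>0$ on $(y_n,\infty)$. Since $x_B<x_g<0$, the value $W_n(x_g)=x_g^{\,n}$ has sign $(-1)^n$, and $x_\Delta<x_g$; comparing against these two intervals gives $y_{2m}<x_g<y_{2m-1}$ for all $m$. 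For the monotonicity, evaluate the recursion at $x=y_n$ to get $W_{n+2}(y_n)=aW_{n+1}(y_n)$: for $n=2m$, the chain $x_\Delta<y_{2m}<x_g<y_{2m+1}$ gives $W_{2m+1}(y_{2m})<0$, whence $W_{2m+2}(y_{2m})<0$ and so $y_{2m}<y_{2m+2}$; for $n=2m-1$, the chain $x_\Delta<y_{2m}<x_g<y_{2m-1}$ gives $y_{2m-1}>\max R_{2m}$, whence $W_{2m}(y_{2m-1})>0$, so $W_{2m+1}(y_{2m-1})>0$ and $y_{2m+1}<y_{2m-1}$. With $y_1=0$ (since $W_1(x)=x$) and $x_B<y_2$ (Theorem~\ref{thm:RR:01}(ii)), this is precisely the chain~\eqref{ineq:osc}.

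The step I expect to be the main obstacle is the threshold $n=n_0$ in part~(ii): the interlacing criterion Theorem~\ref{thm:criterion:interlacing:01}, and with it Lemma~\ref{lem:k=0:01}, can be propagated only while $W_m(x_\Delta)$ retains a fixed nonzero sign, so no single induction runs across $n_0$, and one is forced to handle the three sign regimes $n<n_0$, $n=n_0$ (where $x_\Delta$ abruptly becomes a root and $y_n$ catches up to it), and $n>n_0$ separately, while checking that the base cases $n=1,2$ and the degenerate possibilities $n_0\le1$, $n_0\le2$ remain compatible with the equivalences linking $x_\Delta$ to $0$ and to $-c/(a+b)$. The uniform device ``$R_n\bowtie R_{n-2}$ plus a parity count'' is exactly what lets the same inductive step serve all three regimes.
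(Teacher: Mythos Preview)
Your argument is sound in substance, but the ``uniform device'' you advertise for part~(ii) does not quite close the case $n<n_0$ as written. From $R_n\bowtie R_{n-2}$ and the hypothesis that the second-largest root of $W_{n-2}$ lies below $x_\Delta$, you correctly get that the \emph{third}-largest root of $W_n$ lies below $x_\Delta$, so at most two roots of $W_n$ are $\ge x_\Delta$. For $n>n_0$ this suffices: odd parity forces exactly one. But for $n<n_0$, even parity only forces $0$ or $2$, and you have not excluded $2$. The repair is already inside your own induction hypothesis: when $n<n_0$ one also has $n-2<n_0$, so the hypothesis gives the stronger statement $y_{n-2}<x_\Delta$, i.e.\ all of $R_{n-2}$ lies below $x_\Delta$; then $R_n\bowtie R_{n-2}$ places the \emph{second}-largest root of $W_n$ below $\max R_{n-2}<x_\Delta$, so at most one root can be $\ge x_\Delta$ and even parity gives zero. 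The same stronger bound handles $n=n_0$. So the inductive step needs this small case split, not a single uniform sentence.

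That aside, your route to~(ii) is a genuine streamlining of the paper's. The paper treats the regimes separately: for $n<n_0$ it invokes Lemma~\ref{lem:k=0:01} with $\beta=x_\Delta$; for $n>n_0$ it first reapplies Lemma~\ref{lem:k=0:01} with $\beta=x_g$ to obtain $y_n<x_g$, and then runs a second induction via $R_{n+1}\rtimes R_n$ to show exactly one root lies in $(x_\Delta,x_g)$. Your single induction through $R_n\bowtie R_{n-2}$ plus the sign of $W_n(x_\Delta)$ handles all regimes at once and recovers $y_n\in(x_\Delta,x_g)$ at the end directly from $W_n(x_\Delta)<0<W_n(x_g)$, avoiding the second appeal to Lemma~\ref{lem:k=0:01}. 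Parts~(i) and~(iii) match the paper essentially verbatim; in~(iii) the paper packages the chain~\eqref{ineq:osc} as a joint induction on $y_{2n}<y_{2n+2}<x_g$ and $x_g<y_{2n+1}<y_{2n-1}$, whereas you first establish $y_{2m}<x_g<y_{2m-1}$ for all $m$ from the sign of $W_n(x_g)=x_g^{\,n}$ and then read off the monotonicity from $W_{n+2}(y_n)=aW_{n+1}(y_n)$, which is the same computation reorganized.
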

\begin{proof}
We treat the three cases individually.

\noindent{\bf (i)} Let $c\le-(a^2+2ab)/4$.
Since $a,b>0$, it is routine to check that
$$-c/(a+b)\<-(a^{2}+4c)/(4b)\<-c/b,$$
which verifies Condition~(\ref{cond:k=0:01}) for $\beta=x_\Delta$.
By Lemma~\ref{lem:order:01}, we have
\begin{equation}\label{pf3102}
W_n(x_\Delta)>0\all{n\ge1}.
\end{equation}
Now, by Lemma~\ref{lem:k=0:01}, we deduce that $R_{n}\subset (-\infty,\,x_\Delta)$ for all $n\ge1$.

\medskip\noindent{\bf (ii)} Let $-(a^2+2ab)/4<c<0$.

\smallskip

{\bf Case $n<n_0$.}
Recall from Formula~(\ref{R1R2:01}) that $R_1=\{0\}$ and $R_2=\{-c/(a+b)\}$.  If $n_0\le 1$, then nothing needs to be shown in this case. Next suppose that $n_0>1$, i.e., $a^2+4c<0$.  Together with $b>0$, this implies that $0<-(a^2+4c)/(4b)=x_\Delta$, i.e., $R_1\subset (-\infty,\,x_\Delta)$.  If $n_0\le 2$, then nothing else needs to be shown.  And then suppose that $n_0>2$, i.e., $a^2+ab+4c<0$.  Together with $a,b>0$, it is routine to check that
\begin{equation}\label{pf73}
-c/(a+b)<-(a^2+4c)/(4b),
\end{equation}
i.e., $R_2\subset (-\infty,\,x_\Delta)$.
If $n_0\le 3$, nothing else needs to be shown. So we may suppose that $n_0>3$.

Let $N=\lceil{n_0\rceil}-3$. Since $n_0>3$, the integer~$N$ is positive.
Take $\beta=x_\Delta$. From $x_\Delta<-c/b$, together with Inequality~(\ref{pf73}),
we see that Condition~(\ref{cond:k=0:01}) holds true.
By Lemma~\ref{lem:order:01}, we have $W_n(x_\Delta)>0$ for all $n\in[N]$.
By Lemma~\ref{lem:k=0:01}, we have $R_{n}\subset(-\infty,\,x_\Delta)$ for all $n\in[N+2]=[\lceil n_0\rceil -1]$, i.e.,
for all $n<n_0$.

\smallskip
{\bf Case $n=n_0$}. It follows that the number $n_0$ is an integer.
By Lemma~\ref{lem:order:01}, we have $W_{n_0}(x_\Delta)=0$.
It suffices to show that the polynomial $W_{n_0}(x)$ has no roots larger than the number~$x_\Delta$.
If $n_0=1$, then the polynomial $W_{n_0}(x)=W_1(x)=x$ has only one root. So we are done.
Suppose that $n_0\ge 2$.
By the interlacing property $R_{n_0}\rtimes R_{n_0-1}$ obtained in Theorem~\ref{thm:RR:01},
we see that the second largest root of the polynomial $W_{n_0}(x)$ is less than the largest
root of the polynomial $W_{n_0-1}(x)$, which is less than the number~$x_\Delta$, in view of the case $n<n_0$.
This completes the proof for the case $n=n_0$.

\smallskip
{\bf Case $n>n_0$.}
First, we show that $y_n<x_g$, i.e., $R_n\subset(-\infty,\,x_g)$.
We do this by applying Lemma~\ref{lem:fg:01} for $\beta=x_g$.
Recall from Definition~(\ref{def:xB:xd:xg}) that $x_g=(a+b-\sqrt{(a+b)^2+4c})/2$.
Since $a,b>0$ and $-(a^2+2ab)/4<c<0$, it is routine to check that
\begin{equation}\label{pf76}
-c/(a+b)\<(a+b-\sqrt{(a+b)^2+4c})/2.
\end{equation}
By Lemma~\ref{lem:order:01} (ii), we have
\begin{equation}\label{pf75}
\max(0,\,x_\Delta)<x_g<x_B.
\end{equation}
The particular inequality $x_g<x_B$, together with Inequality~(\ref{pf76}), verifies Condition~(\ref{cond:k=0:01}).
On the other hand, since $x_g>0$, Formula~(\ref{W:xg}) implies that
\begin{equation}\label{sgn:xg:ii}
W_n(x_g)\>0, \all{n\ge1}.
\end{equation}
From Lemma~\ref{lem:k=0:01}, we deduce that $R_n\subset (-\infty,\,x_g)$ for all $n\ge1$. 

By Lemma~\ref{lem:order:01}, we have $W_n(x_\Delta)<0$.
In view of Inequality~(\ref{sgn:xg:ii}),
the polynomial $W_n(x)$ has different signs at the ends of the interval $(x_\Delta,\,x_g)$.
Therefore,
the polynomial $W_n(x)$ has an odd number, say $p_n$, of roots in the interval~$(x_\Delta,\,x_g)$.
In particular, we have
\begin{equation}\label{pf:pn:01}
p_n\ge1\all{n>n_0}.
\end{equation}
It suffices to show that $p_n=1$, for all $n>n_0$. We proceed the proof by induction on $n$.
Note that the largest root of the polynomial $W_{\fl{n_0}}(x)$ is less than or equal to the number~$x_\Delta$.
By the interlacing property $R_{\fl{n_0}+1}\rtimes R_{\fl{n_0}}$,
the polynomial $W_{\fl{n_0}+1}(x)$ has at most one root larger than the number $x_\Delta$,
i.e., $p_{\fl{n_0}+1}\le 1$. In view of Inequality~(\ref{pf:pn:01}), we deduce that $p_{\fl{n_0}+1}=1$.
Thus, we can suppose that $p_n=1$ for some $n>n_0$.
If $n\le 2$, then the degree $d_n\le 1$. It follows immediately that $p_n=1$.
Suppose that $n\ge 3$.
By the interlacing property $R_{n+1}\rtimes R_n$,
the third largest root of the polynomial $W_{n+1}(x)$ is less than the second largest root of the polynomial~$W_n(x)$,
which is at most $x_\Delta$ since $p_n=1$.
Therefore, the polynomial $W_{n+1}(x)$ has at most two roots larger than the number~$x_\Delta$, i.e., $p_n\le 2$.
Since the integer~$p_n$ is odd,
in view of Inequality~(\ref{pf:pn:01}), we infer that $p_n=1$. This completes and the induction and hence the proof of (ii).

\medskip\noindent{\bf(iii)} Let $c>0$.
The bound for the set~$R_n'$ has been confirmed in Theorem~\ref{thm:RR:01}.
It suffices to show Inequality~(\ref{ineq:osc}). By Theorem~\ref{thm:RR:01}, we have $y_n>x_B$ for all $n\ge1$.
It suffices to show that
\begin{align}
&y_{2n}\<y_{2n+2}\<x_g\qquad\text{and}\quad\label{dsr:y:even}\\[3pt]
&x_g\<y_{2n+1}\<y_{2n-1}\label{dsr:y:odd}
\end{align}
for all $n\ge 0$, where $y_0=x_B$ and $y_{-1}=+\infty$. We proceed by induction on the integer~$n$.
When $n=0$, the desired inequalities~(\ref{dsr:y:even}) and~(\ref{dsr:y:odd}) become
$y_2<x_g<y_1$, i.e.,
\[
-c/(a+b)<(a+b-\sqrt{(a+b)^2+4c})/2<0.
\]
Since $a,b,c>0$, it is routine to check the truth of the above inequalities.
Now, based on the induction hypothesis that
\begin{equation}\label{pf:hypo}
y_{2n}\<x_g\<y_{2n-1},
\end{equation}
we are going to show the inequalities~(\ref{dsr:y:even}) and~(\ref{dsr:y:odd}).

Since the number~$y_{2n}$ is largest real root of the polynomial $W_{2n}(x)$,
and $y_{2n-1}>y_{2n}$ by the hypothesis~(\ref{pf:hypo}),
we infer that the value $W_{2n}(y_{2n-1})$ has the same sign as the limit $W_{2n}(+\infty)$,
which is positive by Lemma~\ref{lem:deg:01}.
Therefore, we find $W_{2n}(y_{2n-1})>0$.
Replacing $n$ by $2n-1$ in Recursion~(\ref{rec:01}), and taking $x=y_{2n-1}$, we obtain that
\begin{equation}\label{pf51}
W_{2n+1}(y_{2n-1})=aW_{2n}(y_{2n-1})>0.
\end{equation}
On the other hand,
by Lemma~\ref{lem:order:01}, we have $x_g<0$. Thus from Equation~(\ref{W:xg}), we infer that
\begin{equation}\label{sgn:xg:iii}
W_n(x_g)(-1)^n\>0, \all{n\ge1}.
\end{equation}
In particular, we have $W_{2n+1}(x_g)<0$.
Together with~(\ref{pf51}), we see that the polynomial $W_{2n+1}(x)$
attains different signs at the ends of the interval $(x_g,\,y_{2n-1})$.
By the intermediate value theorem, the polynomial $W_{2n+1}(x)$ has a root in the interval $(x_g,\,y_{2n-1})$.
By Theorem~\ref{thm:RR:01}, only the largest root~$y_{2n+1}$ of the polynomial $W_{2n+1}(x)$ is larger than the number~$x_B$.
Since $x_B<x_g$, we conclude that $y_{2n+1}\in(x_g,\,x_{2n-1})$. This proves Inequality~(\ref{dsr:y:odd}).

Denote by $z_{2n+1}$ the second largest root of the polynomial $W_{2n+1}(x)$.
From the interlacing property $R_{2n+1}\rtimes R_{2n}$, we infer that
\[
W_{2n+1}(x)W_{2n+1}(+\infty)<0\all{x\in(z_{2n+1},\,y_{2n+1})}.
\]
By Lemma~\ref{lem:deg:01}, we see that the limit $W_{2n+1}(+\infty)=+\infty$. It follows that
\begin{equation}\label{pf52}
W_{2n+1}(x)<0\all{x\in(z_{2n+1},\,y_{2n+1})}.
\end{equation}
Now, from Inequality~(\ref{dsr:y:odd}) and the hypothesis~(\ref{pf:hypo}),
we see that $y_{2n}<x_g<y_{2n+1}$. From Theorem~\ref{thm:RR:01}, we see that $z_{2n+1}<x_B<y_{2n}$.
By Inequality~(\ref{pf52}), we infer that $W_{2n+1}(y_{2n})<0$.

Replacing $n$ by $2n+2$ in Recursion~(\ref{rec:01}), and taking $x=y_{2n}$, we obtain that
\begin{equation}\label{pf53}
W_{2n+2}(y_{2n})=aW_{2n+1}(y_{2n})<0.
\end{equation}
By Inequality~(\ref{sgn:xg:iii}), we have $W_{2n+2}(x_g)>0$.
By the intermediate value theorem,
the polynomial $W_{2n+2}(x)$ has a root in the interval $(y_{2n},\,x_g)$.
Since only its largest root is larger than the number~$x_B$, and since~$y_{2n}>x_B$,
we conclude that $y_{2n+2}\in(y_{2n},\,x_g)$.
This proves Inequality~(\ref{dsr:y:even}), which completed the induction.
\end{proof}

In summary, we see that ``almost all'' zeros lie in the open interval $(-\infty,x_\Delta)$. Precisely speaking, when $c\le-(a^2+2ab)/4$, all roots lie in $(-\infty,x_\Delta)$; when $c>-(a^2+2ab)/4$, only the largest root of the polynomial~$W_n(x)$ is possibly but ``eventually'' larger than $x_\Delta$, with maximum value $\max(x_g,\,0)$.

Before ending this section, we mention that the recurrence system defined by Recursion~(\ref{Mrec:W:01}) can be solved always by transforming the polynomials $W_n(x)$ into Chebyshev polynomials. More precisely, by induction and by the fact that Chebyshev polynomials of the second kind satisfy the recursion $U_n(t)=2tU_{n-1}(t)-U_{n-2}(t)$ with initial conditions $U_0(t)=1$ and $U_1(t)=t$, we obtain that
$$W_n(x)=\sqrt{-bx-c}^{\,n}\left(\frac{x}{\sqrt{-bx-c}}U_{n-1}\left(\frac{a}{2\sqrt{-bx-c}}\right)-U_{n-2}\left(\frac{a}{2\sqrt{-bx-c}}\right)\right).$$
By this, it is now clear that all roots of $W_n(x)$ are real and bounded, as described in Theorems \ref{thm:RR:01} and \ref{thm:bound}.

\bigskip
\section{\large Limit Points of the Zero-Set $R_n$ \label{sec:limpts}}

In this section, we show that one of the intervals~$(-\infty,x_\Delta)$, $(-\infty,\,x_g)$,
and $(-\infty,\,y_2)$ is the best bound of all zeros, depending on the range of the constant term~$c$ of the linear polynomial coefficient~$B(x)=bx+c$. More precisely, we will demonstrate the aforementioned three limit points of the zero-set $\cup_{n\ge1}R_n$ over the course of several subsections. We say that a proposition \emph{holds for large $n$}, if there exists a number~$N$ such that the proposition holds whenever $n>N$.
\medskip

\subsection{The number $x_g$ can be a limit point}
The following lemma will help determine all limit points of the zero-set
$\cup_{n\ge1}R_n$, which are larger than the number $x_\Delta$.

\begin{lem}\label{thm:limsgn:d>0:01}
Let $\{W_n(x)\}_{n\ge0}$ be a $(0,1)$-sequence of polynomials. Let $x_0\ne x_g$ and $\Delta(x_0)>0$. Then
$(x_0-x_g)W_n(x_0)>0$, for large $n$.
\end{lem}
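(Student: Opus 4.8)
The plan is to use the explicit solution of the recursion from Lemma~\ref{lem:00}, specialized at the fixed point $x=x_0$. Since $\Delta(x_0)>0$, the recursion $W_n(x_0)=aW_{n-1}(x_0)+(bx_0+c)W_{n-2}(x_0)$ is a constant-coefficient recursion with positive discriminant, so by the $\Delta\ne0$ case of~\eqref{sol:W} we have
\[
W_n(x_0)=\frac{g^+(x_0)\bigl(a+\sqrt{\Delta(x_0)}\,\bigr)^n-g^-(x_0)\bigl(a-\sqrt{\Delta(x_0)}\,\bigr)^n}{2^n\sqrt{\Delta(x_0)}},
\]
where $g^\pm(x_0)=(2x_0-a\pm\sqrt{\Delta(x_0)}\,)/2$ as in Definition~\ref{def:xg}. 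The two characteristic roots are $\lambda_\pm=(a\pm\sqrt{\Delta(x_0)})/2$, and the point of the lemma is that the dominant one controls the sign of $W_n(x_0)$ for large $n$.

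First I would compare the moduli of $\lambda_+$ and $\lambda_-$. Since $a>0$ and $\sqrt{\Delta(x_0)}>0$, we have $\lambda_+>\lambda_-$; I claim $|\lambda_+|>|\lambda_-|$ except in the degenerate situation $\lambda_+=-\lambda_-$, i.e. $a=0$, which is excluded. So $\lambda_+$ is the strictly dominant root (note $\lambda_+>0$), and
\[
W_n(x_0)=\frac{g^+(x_0)\,\lambda_+^{\,n}}{\sqrt{\Delta(x_0)}}\left(1-\frac{g^-(x_0)}{g^+(x_0)}\Bigl(\frac{\lambda_-}{\lambda_+}\Bigr)^{\!n}\right)
\]
provided $g^+(x_0)\ne0$; the parenthesized factor tends to $1$, hence is positive for large $n$. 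Therefore, for large $n$, the sign of $W_n(x_0)$ equals the sign of $g^+(x_0)/\sqrt{\Delta(x_0)}$, which is the sign of $g^+(x_0)$. Now $g^+(x_0)\,g^-(x_0)=g(x_0)=x_0^2-(a+b)x_0-c$, whose roots are $x_g$ and the larger root $\tilde x_g=(a+b+\sqrt{(a+b)^2+4c})/2$; and one computes directly that $g^+(x_0)>0$ is equivalent to $x_0>x_g$ (using $2x_0-a+\sqrt{\Delta(x_0)}>0 \Leftrightarrow \sqrt{\Delta(x_0)}>a-2x_0$, squaring when $a-2x_0\ge0$ and reading off $g(x_0)>0$ together with the sign of $2x_0-(a+b)$ to place $x_0$ relative to $x_g$ versus $\tilde x_g$). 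Thus $\operatorname{sgn} W_n(x_0)=\operatorname{sgn}(x_0-x_g)$ for large $n$, which is exactly $(x_0-x_g)W_n(x_0)>0$.

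Two edge cases must be dispatched. If $g^+(x_0)=0$ then $x_0$ is a root of $g$, so $x_0\in\{x_g,\tilde x_g\}$; the hypothesis $x_0\ne x_g$ forces $x_0=\tilde x_g>x_g$, and in that case $W_n(x_0)=-g^-(x_0)\lambda_-^{\,n}/(2^n\sqrt{\Delta(x_0)})$ with $g^-(x_0)=2x_0-a=\sqrt{(a+b)^2+4c}+b>0$ and $\lambda_-$ possibly negative, so I would instead observe that $\tilde x_g$ is handled separately—actually, since $\lambda_-$ could be negative this would make $W_n(x_0)$ alternate, contradicting the claim; so in fact one shows $g^+(x_0)=0$ cannot occur under the hypotheses, or rather that $\tilde x_g$ fails $\Delta(x_0)>0$ or lies outside the relevant regime. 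I expect the genuinely delicate point, and the main obstacle, to be precisely this bookkeeping: ruling out $g^+(x_0)=0$ and carefully verifying the sign equivalence $g^+(x_0)>0\Leftrightarrow x_0>x_g$ (as opposed to $x_0>\tilde x_g$), since $g$ has two roots but only $x_g$ appears in the statement. This is a finite case analysis on the position of $x_0$ relative to $x_\Delta$, $x_g$, $\tilde x_g$, and $a/2$, and I would lean on the ordering relations already established in Lemma~\ref{lem:order:01} to keep it short.
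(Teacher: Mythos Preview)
Your approach is the same as the paper's up to the point where you have reduced the question to showing that $\operatorname{sgn} g^+(x_0)=\operatorname{sgn}(x_0-x_g)$. Where you then embark on a case analysis via the factorization $g=g^+g^-$ and the second root $\tilde x_g$, the paper settles the matter in one line: the function
\[
2g^+(x)=2x-a+\sqrt{4(bx+c)+a^2}
\]
is strictly increasing on its domain $[x_\Delta,\infty)$ (both summands increase, since $b>0$), and by Definition~\ref{def:xg} the number $x_g$ is \emph{defined} to be the zero of $g^+$. Hence $g^+$ has the unique real zero $x_g$, so $g^+(x_0)=0$ would force $x_0=x_g$, which is excluded; and for $x_0\ne x_g$ monotonicity gives $(x_0-x_g)g^+(x_0)>0$ immediately. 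Your worry that $\tilde x_g$ might also be a zero of $g^+$ is unfounded: the monotone function $g^+$ can have only one zero, so $\tilde x_g$ is a zero of $g^-$, not of $g^+$. No finite case analysis on the position of $x_0$ relative to $x_\Delta$, $a/2$, or $\tilde x_g$ is needed, and Lemma~\ref{lem:order:01} is not invoked at all.
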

\begin{proof}
See Appendix \ref{proofthm:limsgn:d>0:01}.
\end{proof}
\smallskip

Using Lemma~\ref{thm:limsgn:d>0:01},
we can confirm that the roots outside the interval~$(-\infty,x_\Delta)$ converges to
the number~$x_g$ when $n\to\infty$ as follows.
\smallskip

\begin{thm}\label{thm:lim:xg}
Let $\{W_n(x)\}_{n\ge0}$ be a $(0,1)$-sequence of polynomials. Let us denote the ordered zero-set of $W_n(x)$ by $R_n$, and we let $y_n=\max R_n$ be the largest real root of the polynomial~$W_n(x)$.
\begin{itemize}
\smallskip\item[(i)]
If $-(a^2+2ab)/4<c<0$, then we have $y_n\nearrow x_g$.
\smallskip\item[(ii)]
If $c>0$, then we have $y_{2n}\nearrow x_g$ and $y_{2n+1}\searrow x_g$.
\end{itemize}
\end{thm}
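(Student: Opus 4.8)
The plan is to combine the boundedness facts from Theorem~\ref{thm:bound} with the sign-behavior from Lemma~\ref{thm:limsgn:d>0:01} and the oscillation chains already established in Theorem~\ref{thm:bound}(ii)--(iii). In case (i), where $-(a^2+2ab)/4<c<0$, Theorem~\ref{thm:bound}(ii) already gives that $y_n\in(x_\Delta,\,x_g)$ for all $n>n_0$, so $\{y_n\}$ is confined to a bounded interval whose supremum is exactly $x_g$. What remains is to show $y_n\to x_g$ and that the approach is monotone increasing. For convergence, I would argue by contradiction: if $y_n\not\to x_g$, then (since $y_n<x_g$ always) there is an $\varepsilon>0$ and infinitely many $n$ with $y_n<x_g-\varepsilon$. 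Pick $x_0=x_g-\varepsilon$; since $x_\Delta<x_0<x_g$ and $\Delta$ is increasing with $\Delta(x_\Delta)=0$, we have $\Delta(x_0)>0$, so Lemma~\ref{thm:limsgn:d>0:01} applies and gives $(x_0-x_g)W_n(x_0)>0$ for large $n$, i.e.\ $W_n(x_0)<0$ for large $n$. But for those infinitely many $n$ with $y_n<x_0$, the polynomial $W_n(x)$ has no root in $[x_0,\infty)$, so $W_n(x_0)$ has the sign of $W_n(+\infty)>0$ (Lemma~\ref{lem:deg:01}) — a contradiction. Hence $y_n\to x_g$.

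For the monotonicity $y_n\nearrow x_g$ in case (i), I would again use Theorem~\ref{thm:RR:01}: the interlacing $R_{n+1}\rtimes R_n$ forces the largest root $y_{n+1}$ to lie to the right of $y_n$, since $R_{n+1}$ interlaces $R_n$ from the right, meaning $y_{n-1}=y_{q}<x_{p}=y_{n+1}$ in the notation of Definition~\ref{def:interlacing:01} when both zero-sets have the same cardinality, and $y_n<y_{n+1}$ when $d_{n+1}=d_n+1$. Combined with $y_n<x_g$ for all $n>n_0$ and $y_n\to x_g$, this yields $y_n\nearrow x_g$. Actually I should be slightly careful about the parity in which the degree increases; but in every case the largest root strictly increases under $R_{n+1}\rtimes R_n$, so the sequence $\{y_n\}_{n>n_0}$ is strictly increasing and bounded above by $x_g$ with limit $x_g$, giving $y_n\nearrow x_g$.

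For case (ii), where $c>0$, the structure is already laid out by the oscillation chain~(\ref{ineq:osc}) in Theorem~\ref{thm:bound}(iii): $x_B<y_2<y_4<\cdots<x_g<\cdots<y_3<y_1=0$. Thus $\{y_{2n}\}$ is strictly increasing and bounded above by $x_g$, and $\{y_{2n+1}\}$ is strictly decreasing and bounded below by $x_g$; each therefore has a limit, say $\ell_{\mathrm{ev}}\le x_g\le\ell_{\mathrm{od}}$. To pin both limits to $x_g$, I would run the same contradiction argument as above with a well-chosen test point. If $\ell_{\mathrm{ev}}<x_g$, pick $x_0$ with $x_\Delta<\ell_{\mathrm{ev}}<x_0<x_g$; then $\Delta(x_0)>0$ and $x_0<x_g$, so Lemma~\ref{thm:limsgn:d>0:01} gives $W_n(x_0)<0$ for all large $n$, in particular for large even $n$. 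But for even $n$ with $y_{2m}<x_0$ (true once $2m$ is large, since $y_{2m}\to\ell_{\mathrm{ev}}<x_0$), $x_0$ lies to the right of every root of $W_{2m}(x)$, so $W_{2m}(x_0)$ has the sign of $W_{2m}(+\infty)>0$, a contradiction; hence $\ell_{\mathrm{ev}}=x_g$, i.e.\ $y_{2n}\nearrow x_g$. Symmetrically, if $\ell_{\mathrm{od}}>x_g$, choose $x_0$ with $x_g<x_0<\ell_{\mathrm{od}}$ (note $x_0>x_g>x_\Delta$ so $\Delta(x_0)>0$); Lemma~\ref{thm:limsgn:d>0:01} now gives $(x_0-x_g)W_n(x_0)>0$, i.e.\ $W_n(x_0)>0$, for large $n$, hence for large odd $n$. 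Yet for odd $n=2m+1$ large enough that $y_{2m+1}>x_0$, the largest root exceeds $x_0$ while $x_B<x_g<x_0$ and (by Theorem~\ref{thm:RR:01}) only the largest root exceeds $x_B$, so $x_0$ lies strictly between the second-largest root and $y_{2m+1}$; there $W_{2m+1}(x)$ has the sign opposite to $W_{2m+1}(+\infty)>0$, i.e.\ $W_{2m+1}(x_0)<0$, contradicting the above. Hence $\ell_{\mathrm{od}}=x_g$ and $y_{2n+1}\searrow x_g$.

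The main obstacle I anticipate is the bookkeeping in case (ii) for the odd-index contradiction: one must carefully locate the test point $x_0$ strictly between the second-largest root $z_{2m+1}$ and the largest root $y_{2m+1}$ so that the sign of $W_{2m+1}(x_0)$ is determined, and this requires invoking both $z_{2m+1}<x_B$ (from Theorem~\ref{thm:RR:01}, since only $y_n$ exceeds $x_B$) and $x_B<x_g<x_0<y_{2m+1}$. The even case is cleaner because $x_0$ sits to the right of all roots and the sign of $W_{2m}(x_0)$ is immediately that of the leading behavior. Everything else is a routine assembly of results already proved: Theorem~\ref{thm:bound} supplies the monotone bounded sequences, Lemma~\ref{thm:limsgn:d>0:01} supplies the eventual sign at any fixed point in $(x_\Delta,\infty)\setminus\{x_g\}$, and Lemma~\ref{lem:deg:01} together with Theorem~\ref{thm:RR:01} supplies the sign of $W_n$ just beyond its largest root.
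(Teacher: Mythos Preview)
Your proposal is correct and follows essentially the same approach as the paper: combine the monotone bounds from Theorem~\ref{thm:bound} with the interlacing from Theorem~\ref{thm:RR:01} to get monotone bounded sequences, then use Lemma~\ref{thm:limsgn:d>0:01} at a test point in $(x_\Delta,\infty)\setminus\{x_g\}$ to force the limit to be $x_g$. The only cosmetic difference is that the paper first establishes monotonicity (so convergence is immediate) and compares the sign of $W_n(x_0)$ to $W_n(x_g)$ via \eqref{sgn:xg:ii}--\eqref{sgn:xg:iii}, whereas you compare to the leading-term sign via Lemma~\ref{lem:deg:01} and the root location from Theorem~\ref{thm:RR:01}; both routes yield the same contradiction.
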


\begin{proof}
We treat the two cases individually.

\medskip\noindent{\bf(i)} Suppose that $-(a^2+2ab)/4<c<0$.
Since $R_{n+1}\rtimes R_n$, the sequence $y_n$ increases.
In virtue of Theorem~\ref{thm:bound},
we have $y_n<x_g$ for all $n\ge1$.
Therefore, the sequence $y_n$ converges to a finite number $y^*$ as $n\to\infty$.
If $y^*<x_g$, then there exists $x_0\in(x_\Delta,\,x_g)$ such that
the values~$W_n(x_0)$ and $W_n(x_g)$ have the same sign for large~$n$,
i.e., $W_n(x_0)>0$ for large~$n$; see Inequality~(\ref{sgn:xg:ii}).
This contradicts Theorem~\ref{thm:limsgn:d>0:01}.
Hence, we have that $y_n\nearrow x_g$.

\medskip\noindent{\bf(ii)} Suppose that $c>0$.
From Theorem~\ref{thm:bound},
we see that the sequence $y_{2n}$ converges to a finite number $y^*$.
Then we have $x_B<y^*\le x_g$.
Suppose to the contrary that $y^*<x_g$, so there exists $x_0\in(\ell_e,\,x_g)$ such that
the numbers $W_{2n}(x_0)$ and $W_{2n}(x_g)$ have the same sign for large $n$,
i.e., $W_{2n}(x_0)>0$ for large $n$; see Inequality~(\ref{sgn:xg:iii}).
This contradicts Theorem~\ref{thm:limsgn:d>0:01}.
Along the same line, we can show the convergence $y_{2n+1}\searrow x_g$, which completes the proof.
\end{proof}
\smallskip

An illustration for the convergences above can be found in Tables~\ref{tab:ii} and \ref{tab:iii}.
\medskip

\subsection{The number $x_\Delta$ is a limit point}

In an analog with Lemma~\ref{thm:limsgn:d>0:01},
we give a characterization of the sign of the value~$W_n(x_0)$
for the case $\Delta(x_0)<0$.
This time the criterion for the sign is for all positive integers $n$.
We define $l_{x_0}$ to be the straight line $\sqrt{-\Delta(x_0)}\,x+(2x_0-a)y=0$, and the radian $\theta(x_0)$ to be $\arctan{\sqrt{-\Delta(x_0)}\over a}$.

\eject

\begin{lem}\label{thm:sgn:d<0:01}
Let $\{W_n(x)\}_{n\ge0}$ be a $(0,1)$-sequence of polynomials, and let $\Delta(x_0)<0$.
\begin{itemize}
\item If the radian $n\theta(x_0)$ lies to the left of the line $l_{x_0}$, then $W_n(x_0)<0$;
\item If the radian $n\theta(x_0)$ lies on the line $l_{x_0}$, then $W_n(x_0)=0$;
\item If the radian $n\theta(x_0)$ lies to the right of the line $l_{x_0}$, then $W_n(x_0)>0$.
\end{itemize}
\end{lem}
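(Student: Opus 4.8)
The plan is to specialize the general constant-coefficient solution of Lemma~\ref{lem:00} to the point $x_0$ and then read off the sign of the resulting sinusoid. Fix a real number $x_0$ with $\Delta(x_0)<0$ and set $w_n:=W_n(x_0)$. By Recursion~\eqref{rec:01} the sequence $\{w_n\}_{n\ge0}$ satisfies $w_n=aw_{n-1}+(bx_0+c)w_{n-2}$ with $w_0=1$ and $w_1=x_0$, so Lemma~\ref{lem:00} applies with $A=a$ and $B=bx_0+c$; its discriminant is $a^2+4(bx_0+c)=\Delta(x_0)<0$. Hence $\sqrt{\Delta(x_0)}=i\sqrt{-\Delta(x_0)}$ is purely imaginary, and the polar form of $a+\sqrt{\Delta(x_0)}=a+i\sqrt{-\Delta(x_0)}$ is $Re^{i\theta(x_0)}$ with $R=\sqrt{a^2-\Delta(x_0)}>0$ and argument $\arctan\bigl(\sqrt{-\Delta(x_0)}/a\bigr)$, which is exactly the radian $\theta(x_0)$ defined just above the lemma.

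Next I would substitute this data into the closed form of Lemma~\ref{lem:00}. With $g^{\pm}=g^{\pm}(x_0)=\bigl((2x_0-a)\pm i\sqrt{-\Delta(x_0)}\bigr)/2$ and $g^{-}=\overline{g^{+}}$, a short manipulation of \eqref{sol:W} (essentially formula \eqref{sol:W:Delta<0}) yields
\[
W_n(x_0)=\Bigl(\tfrac{R}{2}\Bigr)^{n}\,\frac{(2x_0-a)\sin\bigl(n\theta(x_0)\bigr)+\sqrt{-\Delta(x_0)}\cos\bigl(n\theta(x_0)\bigr)}{\sqrt{-\Delta(x_0)}}.
\]
Since $R>0$ and $\sqrt{-\Delta(x_0)}>0$, the leading factor is positive, so $\operatorname{sgn}W_n(x_0)$ equals the sign of $L\bigl(\cos(n\theta(x_0)),\sin(n\theta(x_0))\bigr)$, where $L(x,y):=\sqrt{-\Delta(x_0)}\,x+(2x_0-a)\,y$ is precisely the linear form whose zero locus is the line $l_{x_0}$.

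It then remains to translate this into the language of the statement. The point $P_n:=\bigl(\cos(n\theta(x_0)),\sin(n\theta(x_0))\bigr)$ of the unit circle is the one determined by the radian $n\theta(x_0)$, and $L(P_n)$ is, up to a positive scalar, the signed distance from $P_n$ to $l_{x_0}$. The line $l_{x_0}$ passes through the origin with direction vector $\bigl(-(2x_0-a),\sqrt{-\Delta(x_0)}\bigr)$, whose second coordinate is positive; orienting $l_{x_0}$ this way, the open half-plane $\{L<0\}$ lies to its left and $\{L>0\}$ to its right. Therefore $W_n(x_0)$ is negative, zero, or positive according as $P_n$ lies to the left of, on, or to the right of $l_{x_0}$, which is the assertion of the lemma. (The degenerate case $2x_0-a=0$, where $l_{x_0}$ is the $y$-axis and the claim reduces to $\operatorname{sgn}W_n(x_0)=\operatorname{sgn}\cos(n\theta(x_0))$, is consistent with this.)

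The computational core — deriving the displayed formula for $W_n(x_0)$ from Lemma~\ref{lem:00} — is routine bookkeeping with the purely imaginary square root. The one point that genuinely needs care is fixing the orientation of $l_{x_0}$ so that the words ``left'' and ``right'' line up with all three cases of the statement at once; I expect this sign/orientation check to be the only real obstacle.
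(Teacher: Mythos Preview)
Your proposal is correct and follows essentially the same route as the paper: specialize Lemma~\ref{lem:00} at $x_0$, use the $\Delta<0$ closed form to write $W_n(x_0)$ as a positive multiple of $\sqrt{-\Delta(x_0)}\cos(n\theta(x_0))+(2x_0-a)\sin(n\theta(x_0))$, and then read off the sign as the side of $l_{x_0}$ on which the radian $n\theta(x_0)$ falls. The only cosmetic difference is that the paper carries out the last step via a case split on the sign of $\ell=(2x_0-a)/\sqrt{-\Delta(x_0)}$ and of $\cos\theta$, whereas you phrase it uniformly through the linear form $L$; your orientation check (direction vector with positive second coordinate, so $\{L>0\}$ is the right half-plane) is exactly the content of that case analysis.
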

\begin{proof}
See Appendix \ref{proofthm:sgn:d<0:01}.
\end{proof}

Let us get some illustration of this characterization from the example below.

\begin{eg}\label{eg2:sgn:d<0}
This example continues Example~\ref{eg2:order}.
Take $x_0=-1$, we have $\Delta(x_0)=-5<0$ and $\theta(x_0)=\arctan(\sqrt{5})$.
The line $l_{x_0}$ becomes $\sqrt{5}x-3y=0$.
Thus a radian $\phi$ lies to the left of the line $l_{x_0}$ if and only if
\begin{equation}\label{eg:range}
\phi\in\bigl(\arctan(\sqrt{5}/3)+2\ell\pi,\,\arctan(\sqrt{5}/3)+(2\ell+1)\pi\bigr)
\quad\text{for some integer $\ell$}.
\end{equation}
By approximating $\arctan(\sqrt{5}/3)\approx0.6405$, we have that
$\theta(x_0)\approx 1.1502$, $2\theta(x_0)\approx 2.3005$ and
$3\theta(x_0)\approx 3.4507$.
By Relation~(\ref{eg:range}),
we deduce that $\theta_{x_0}$, $2\theta_{x_0}$ and~$3\theta_{x_0}$
lie to the left of the line~$l_{x_0}$.
In the same way we can deduce that the radians $4\theta(x_0)\approx4.6010$,
$5\theta(x_0)\approx5.7513$ and $6\theta(x_0)\approx6.9015$
lie to the right of the line~$l_{x_0}$.
The truth is, as one may compute directly, that
\begin{xalignat*}{3}
W_1(-1)&\=-1,
& W_2(-1)&\=-5/2,
& W_3(-1)&\=-1,\\[3pt]
W_4(-1)&\=11/4,
& W_5(-1)&\=17/4,
& W_6(-1)&\=1/8.
\end{xalignat*}
The above data verifies the fact that $W_n(x_0)<0$ for $n\in\{1,2,3\}$,
and that $W_n(x_0)>0$ for $n\in\{4,5,6\}$, coinciding with the characterization.
\end{eg}

Now we are ready to justify that the number $x_\Delta$ is a limit point.

\begin{thm}\label{thm:lim:xd}
Let $\{W_n(x)\}_{n\ge0}$ be a $(0,1)$-sequence of polynomials, and let us denote the ordered zero-set of $W_n(x)$ by $R_n=\{\xi_{n,1},\ldots,\xi_{n,d_n}\}$. Then
\begin{equation}\label{lim:xd}
\lim_{n\to\infty}\xi_{n,\,d_n-i}\=x_\Delta
\end{equation}
for all $i\ge0$ if $c\le -(a^2+2ab)/4$; and for all $i\ge1$ otherwise.
\end{thm}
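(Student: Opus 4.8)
The plan is to fix $i$ and study the roots of $W_n(x)$ that lie in a shrinking neighbourhood of $x_\Delta$, using the two sign-characterization lemmas already established. First I would dispose of the range of indices: by Theorem~\ref{thm:bound}, when $c\le -(a^2+2ab)/4$ every root of $W_n(x)$ lies in $(-\infty,x_\Delta)$, so $\xi_{n,d_n}=y_n$ itself is one of the roots to be controlled and the claim should hold for all $i\ge0$; when $c>-(a^2+2ab)/4$ the largest root $y_n$ escapes to the interval $(x_\Delta,x_g)$ (handled already by Theorem~\ref{thm:lim:xg}), so here the claim concerns the roots $\xi_{n,d_n-i}$ with $i\ge1$, i.e.\ all roots \emph{except} $y_n$. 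In both cases the set $R_n'=R_n\setminus\{y_n\}$ (or $R_n$ itself in case (i) of Theorem~\ref{thm:bound}) consists of roots below $x_\Delta$, and the interlacing relations $R_{n+1}'\rtimes R_n'$ etc.\ from Theorem~\ref{thm:RR:01} and Theorem~\ref{thm:bound} guarantee that, for fixed $i$, the sequence $\{\xi_{n,d_n-i}\}_n$ is monotone (eventually) and bounded above by $x_\Delta$, hence converges to some limit $L_i\le x_\Delta$.

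The core of the argument is to show $L_i=x_\Delta$ by ruling out $L_i<x_\Delta$. Suppose $L_i<x_\Delta$ for some $i$; pick the largest such $i$, so that $\xi_{n,d_n-i+1}\to x_\Delta$ (or $i$ is the largest index below $d_n$, using the $y_n$ discussion). Then for large $n$ there is no root of $W_n(x)$ in a fixed interval $(\xi_{n,d_n-i},\,x_\Delta)$ that contains, for all large $n$, some fixed point $x_0<x_\Delta$ with $x_0$ arbitrarily close to $x_\Delta$ from the left; in particular $\Delta(x_0)<0$ since $x_0<x_\Delta=x_\Delta$ forces $\Delta(x_0)=4bx_0+a^2+4c<0$. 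Now Lemma~\ref{thm:sgn:d<0:01} determines the sign of $W_n(x_0)$ by the position of the radian $n\theta(x_0)$ relative to the line $l_{x_0}$: as $n$ runs through the integers, $n\theta(x_0)$ is equidistributed enough that $\mathrm{sign}\,W_n(x_0)$ changes infinitely often (because $\theta(x_0)=\arctan\!\big(\sqrt{-\Delta(x_0)}/a\big)$ is a fixed positive angle, so consecutive multiples $n\theta(x_0)$ keep crossing the line $l_{x_0}$). Hence $W_n(x_0)$ and $W_{n+1}(x_0)$ have opposite signs for infinitely many $n$, so by the intermediate value theorem $W_n(x)$ has a root arbitrarily close to (and just below) $x_0$ for infinitely many $n$ — and since $x_0$ can be taken arbitrarily close to $x_\Delta$, this produces, for infinitely many $n$, a root of $W_n(x)$ strictly between $L_i$ and $x_\Delta$, contradicting $\xi_{n,d_n-i+1}\to x_\Delta$ and $\xi_{n,d_n-i}\to L_i<x_\Delta$ (there would be ``too many'' roots squeezed near $x_\Delta$).

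To make the counting clean I would phrase it as: for every $\epsilon>0$, the number of roots of $W_n(x)$ in $(x_\Delta-\epsilon,\,x_\Delta)$ tends to $\infty$ as $n\to\infty$. This follows from the sign-change argument applied on a \emph{grid} of points $x_\Delta-\epsilon<x^{(1)}<x^{(2)}<\cdots<x^{(k)}<x_\Delta$: for each fixed $x^{(j)}$, Lemma~\ref{thm:sgn:d<0:01} forces $\mathrm{sign}\,W_n(x^{(j)})$ to oscillate, and one checks that the lines $l_{x^{(j)}}$ and angles $\theta(x^{(j)})$ vary continuously in $x^{(j)}$, so that one can arrange a single window of $n$'s over which $W_n$ changes sign across each consecutive pair $(x^{(j)},x^{(j+1)})$, giving $\ge k-1$ roots in $(x_\Delta-\epsilon,x_\Delta)$. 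Combined with the fact (from $R_{n+1}'\rtimes R_n'$ and $d_n\to\infty$) that $\xi_{n,d_n-i}$ converges for each fixed $i$, this forces every one of these limits to equal $x_\Delta$. The main obstacle I anticipate is the equidistribution/sign-oscillation bookkeeping in Lemma~\ref{thm:sgn:d<0:01}: one must show that the rotation by the fixed irrational-looking angle $\theta(x_0)$ really does cross the line $l_{x_0}$ with bounded gaps (which it does, since any fixed positive angle increment crosses any fixed line every $O(1/\theta(x_0))$ steps), and that this can be done uniformly over a small interval of $x_0$'s — purely a compactness-plus-continuity argument, but the one place where care is needed.
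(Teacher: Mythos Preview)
Your setup matches the paper's: the interlacing relations from Theorem~\ref{thm:RR:01} and Theorem~\ref{thm:bound} do make $\{\xi_{n,d_n-i}\}_n$ eventually monotone and bounded above by $x_\Delta$, so each $L_i=\lim_n\xi_{n,d_n-i}$ exists with $L_i\le x_\Delta$, and Lemma~\ref{thm:sgn:d<0:01} is indeed the right tool for the contradiction. But the contradiction step itself is mis-executed, and in two places.

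First, a slip: you write ``pick the largest such $i$'', but if $L_i<x_\Delta$ then $L_j\le L_i<x_\Delta$ for every $j>i$, so there is no largest one. You want the \emph{smallest} such $i$ (so that $L_{i-1}=x_\Delta$); the paper handles this as an induction on $i$ starting from $i=0$ (case $c\le -(a^2+2ab)/4$) or $i=1$ (otherwise).

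Second, and more seriously, the sentence ``$W_n(x_0)$ and $W_{n+1}(x_0)$ have opposite signs for infinitely many $n$, so by the intermediate value theorem $W_n(x)$ has a root arbitrarily close to $x_0$'' is not valid: the intermediate value theorem compares values of a \emph{single} continuous function at two points, not values of two different polynomials at the same point. Nothing about $W_n(x_0)W_{n+1}(x_0)<0$ forces a root of $W_n$ (or of $W_{n+1}$) near $x_0$.

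The grid argument you then propose is not wrong in spirit, but it is unnecessary, and the part you flag as the ``main obstacle'' is precisely where the work would lie. The paper avoids all of this with a one-line contradiction that you already have the pieces for. Here it is: assume $i$ is the smallest index with $L_i<x_\Delta$, and pick any $x_0\in(L_i,x_\Delta)$. Since $\xi_{n,d_n-i}\nearrow L_i<x_0$ and $\xi_{n,d_n-j}\nearrow x_\Delta>x_0$ for every $j<i$, for all large $n$ the polynomial $W_n$ has exactly $i$ roots in $(x_0,x_\Delta)$ (and none in $[x_\Delta,\infty)$ in case~(i); exactly one in case~(ii)/(iii), which is also eventually fixed). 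Hence the sign of $W_n(x_0)$ is \emph{eventually constant} --- equal to $(-1)^i$ times the eventually constant sign of $W_n(x_\Delta)$. But Lemma~\ref{thm:sgn:d<0:01} says that, since $\Delta(x_0)<0$ and $0<\theta(x_0)<\pi/2$, the radian $n\theta(x_0)$ lands on each side of $l_{x_0}$ infinitely often, so $\mathrm{sign}\,W_n(x_0)$ cannot stabilize. That is the entire contradiction; no root-production or grid is needed.
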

\begin{proof}
Let $c\le -(a^2+2ab)/4$.
We will show the limit~(\ref{lim:xd}) for all $i\ge0$.
As will be seen, the other case can be done in the same vein.

From the interlacing property obtained in Theorem~\ref{thm:RR:01},
we see that the sequence $\{\xi_{n,\,d_n-i}\}_{n\ge1}$ increases and all its members are less than
the number~$x_\Delta$, which implies that it converges to a number which is at most~$x_\Delta$.
Suppose, by way of contradiction,
that the limit point of the sequence~$\{\xi_{n,\,d_n-i}\}_{n\ge1}$ is not the point~$x_\Delta$.

When $i=0$, there exists a point $x_0<x_\Delta$ such that
the numbers $W_n(x_0)$ and $W_n(x_\Delta)$ have the same sign,
i.e.,
we have $W_n(x_0)>0$ for large~$n$.
Therefore, by Lemma~\ref{thm:sgn:d<0:01}, the radian~$n\theta(x_0)$ resides
in certain one side of the line~$l_{x_0}$ forever for large~$n$.
This is impossible because $\theta(x_0)<\pi/2$. Hence we deduce that $\lim_{n\to\infty}\xi_{n,d_n}\=x_\Delta$.

Now for $i=1$, the sequence~$\{\xi_{n,\,d_n-1}\}_{n\ge1}$ converges to some point less than $x_\Delta$.
Thus, there exists a number $x_1<x_\Delta$ such that
the numbers $W_n(x_1)$ and $W_n(x_\Delta)$ have distinct signs,
i.e.,
we have $W_n(x_1)<0$ for large $n$.
Here again, the radian~$\theta(x_1)$ resides
in certain one side of the line~$l_{x_1}$ for large~$n$, a contradiction.
This confirms the truth of the limit~(\ref{lim:xd}) for $i=1$.
Continuing in this way,
we can deduce that for a general $i\ge2$,
there exists a number $x_i<x_\Delta$, such that
\[
W_n(x_i)(-1)^i\>0\qquad\text{for large $n$},
\]
which contradicts Lemma~\ref{thm:sgn:d<0:01}.
Hence, we conclude that the limit~(\ref{lim:xd}) holds true for all $i\ge0$.

Now we consider the other possibility that $c>-(a^2+2ab)/4$.
In fact, the above contradiction idea still works.
This is because that, whatever sign does the value~$W_n(x_\Delta)$ have,
it is a fixed sign.
However, the sign of the value~$W_n(x_0)$
for any point $x_0<x_\Delta$ can not be invariant for large $n$.
This completes the proof.
\end{proof}
\medskip

\subsection{Negative infinity is a limit point}
Now we are ready to study the negative infinity as a limit point.
\smallskip

\begin{thm}\label{thm:lim:-infty}
Let $\{W_n(x)\}_{n\ge0}$ be a $(0,1)$-sequence of polynomials, and let us denote the ordered zero-set of $W_n(x)$ by $R_n=\{\xi_{n,1},\ldots,\xi_{n,d_n}\}$. Then
$\lim_{n\to\infty}\xi_{n,i}\=-\infty$, for all positive integers $i$.
\end{thm}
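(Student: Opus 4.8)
The plan is to show that for each fixed index $i$, the $i$-th smallest root $\xi_{n,i}$ is unbounded below as $n\to\infty$, using the degree growth $d_n=\lfloor(n+1)/2\rfloor$ together with the interlacing relations already established. The key observation is that the number of roots of $W_n(x)$ grows without bound, while Theorem~\ref{thm:lim:xd} (and Theorem~\ref{thm:bound}) tells us that for each fixed $j\ge1$, the $j$-th \emph{largest} root $\xi_{n,d_n-j+1}$ converges to $x_\Delta$ (or, for the largest root in cases (ii) and (iii), to $x_g$ or oscillates near $x_g$). So ``most'' of the roots — all but the top $O(1)$ of them — are being squeezed into a bounded region near $x_\Delta$, yet there are roughly $n/2$ of them and they are all distinct. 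The roots cannot accumulate, so they must escape to $-\infty$.

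To make this precise, I would fix $i\ge1$ and argue by contradiction: suppose $\xi_{n,i}$ does not tend to $-\infty$. First, by the interlacing relation $R_{n+1}\rtimes R_n$ from Theorem~\ref{thm:RR:01}, the sequence $\{\xi_{n,i}\}_n$ (suitably indexed so that the $i$-th smallest root is tracked once $d_n\ge i$) is monotone — decreasing — hence if it does not go to $-\infty$ it converges to a finite limit $L$. Then I would produce a contradiction by counting: for any fixed $N$, Theorem~\ref{thm:lim:xd} provides a bound $M_1$ such that for $n>M_1$ all of the roots $\xi_{n,d_n},\xi_{n,d_n-1},\ldots,\xi_{n,d_n-N}$ lie in a small neighborhood of $x_\Delta$ (in case (iii) the very largest root lies near $x_g$, which is still in a fixed bounded set, so this causes no difficulty); meanwhile, since $\{\xi_{n,i}\}_n$ is decreasing with limit $L$, all of $\xi_{n,1},\ldots,\xi_{n,i}$ lie in the bounded interval $[L,\xi_{M_1,i}]$ for $n$ large. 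Choosing $N$ large and noting $d_n\to\infty$, the polynomial $W_n(x)$ would have at least $d_n - (N+1) - i \to \infty$ roots that are not accounted for — they lie strictly between $\xi_{n,i}$ and $\xi_{n,d_n-N}$, in a region bounded above by the fixed number $\max(x_\Delta,x_g)$ and bounded below (eventually) by $L$. That is a fixed bounded interval containing arbitrarily many distinct real numbers, which is fine set-theoretically; the contradiction must instead come from a sign/Lemma~\ref{thm:sgn:d<0:01} argument at a fixed point.

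So the cleaner route, which I would actually carry out, is the sign argument: fix a point $x_0 < \min(L, x_\Delta)$ — this is possible under the contradiction hypothesis that $\xi_{n,1}\ge\xi_{n,i}\to L>-\infty$ — and note $\Delta(x_0)<0$ since $x_0<x_\Delta=x_\Delta$. The interlacing structure forces the sign of $W_n(x_0)$: since $x_0$ lies below all roots of $W_n$ for every large $n$ (because the smallest root $\xi_{n,1}$ stays $\ge L>x_0$), we have $\mathrm{sgn}\,W_n(x_0) = \mathrm{sgn}\,W_n(-\infty) = (-1)^{d_n}$, which alternates as $d_n$ increases by one roughly every other step. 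On the other hand, Lemma~\ref{thm:sgn:d<0:01} says the sign of $W_n(x_0)$ is governed by which side of the fixed line $l_{x_0}$ the radian $n\theta(x_0)$ lies on, and since $\theta(x_0)\in(0,\pi/2)$ is a fixed irrational-looking increment, $n\theta(x_0)$ sweeps monotonically around and its pattern of sides of $l_{x_0}$ is \emph{not} eventually equal to the $(-1)^{d_n}$ pattern — indeed the $(-1)^{d_n}$ pattern has period $4$ in $n$ while the geometric pattern changes with a different (and generally incommensurable) frequency, and more to the point, for $x_0$ chosen sufficiently negative, $\theta(x_0)$ can be made close to $\pi/2$, forcing $W_n(x_0)$ to be negative for stretches of consecutive $n$ that are too long to be compatible with the alternation $(-1)^{d_n}$. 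Extracting a single value of $n$ where the two prescriptions disagree yields the contradiction.

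\textbf{Main obstacle.} The delicate point is making the last sign mismatch argument airtight: I must guarantee that, for a cleverly chosen $x_0$, there really is some large $n$ at which ``$x_0$ lies below all roots of $W_n$'' (so $\mathrm{sgn}\,W_n(x_0)=(-1)^{d_n}$) while simultaneously Lemma~\ref{thm:sgn:d<0:01} forces the opposite sign. The cleanest way is probably to observe that under the contradiction hypothesis the hypothesis of Lemma~\ref{thm:sgn:d<0:01} can be invoked at \emph{two} nearby points $x_0 < x_0'$ both below $L$, with $W_n(x_0)$ and $W_n(x_0')$ forced to have the same sign for all large $n$ (both equal to $(-1)^{d_n}$ since no root lies below $L$), yet the radians $n\theta(x_0)$ and $n\theta(x_0')$, having slightly different increments $\theta(x_0)\ne\theta(x_0')$, will eventually fall on opposite sides of their respective lines infinitely often — contradicting that $W_n(x_0)$ and $W_n(x_0')$ track the same alternating pattern. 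I expect this comparison-of-two-points trick, rather than a single-point count, to be how the authors close the argument, and verifying the ``eventually on opposite sides'' claim (an equidistribution-type statement for $n\mapsto n\theta \bmod 2\pi$ relative to a fixed line) is the technical heart of the proof.
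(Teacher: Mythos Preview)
Your overall strategy—monotone convergence from interlacing, then a sign contradiction at a fixed $x_0<x_\Delta$ via Lemma~\ref{thm:sgn:d<0:01}—is exactly the paper's. Two places where your execution diverges:

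\textbf{Monotonicity.} The relation $R_{n+1}\rtimes R_n$ does \emph{not} make $\{\xi_{n,i}\}_n$ decreasing: when $d_{n+1}=d_n$ (i.e.\ $n$ even) it gives $\xi_{n+1,i}>\xi_{n,i}$, and only when $d_{n+1}=d_n+1$ does it give $\xi_{n+1,i}<\xi_{n,i}$. The paper instead uses $R_{n+2}\bowtie R_n$ (or $R_{n+2}'\bowtie R_n'$ when $c>0$) from Theorem~\ref{thm:RR:01}, which yields $\xi_{n+2,i}<\xi_{n,i}$ directly; it then treats the two decreasing parity subsequences $\{\xi_{2n,i}\}$ and $\{\xi_{2n-1,i}\}$ separately and combines them at the end.

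\textbf{The sign contradiction.} The paper's version is a single sentence: assuming $\xi_{2n,1}\to x^*>-\infty$, pick $x_0<x^*$; then $W_n(x_0)$ has the sign of $W_n(-\infty)$, which the paper asserts ``would not change for large $n$'', contradicting Lemma~\ref{thm:sgn:d<0:01} since $\theta(x_0)<\pi/2$ makes $n\theta(x_0)$ visit both sides of $l_{x_0}$ infinitely often. Your worry here is legitimate—that sign is $(-1)^{d_n}$, not constant, so the paper's wording is loose—but your two-point equidistribution scheme is more than is needed. Work along one parity: $W_{2n}(x_0)$ has sign $(-1)^{d_{2n}}=(-1)^n$, which flips at \emph{every} step, giving $N-1$ sign changes among $n\le N$. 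By Lemma~\ref{thm:sgn:d<0:01} each such flip forces the radian to cross $l_{x_0}$ between $2n\theta(x_0)$ and $(2n+2)\theta(x_0)$; since the total angle swept is $2N\theta(x_0)$ and the two crossing points on the circle are $\pi$ apart, there are only $2N\theta(x_0)/\pi+O(1)$ crossings available. As $\theta(x_0)<\pi/2$, this is strictly less than $N$ for large $N$—contradiction. No second test point, no incommensurability hypothesis; and the same count handles general $i$ once the cases $j<i$ are settled inductively.
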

\begin{proof}
From the interlacing property $R_{n+2}\bowtie R_n$ obtained in Theorem~\ref{thm:RR:01},
we see that the sequences~$\{\xi_{2n,\,i}\}_{n\ge1}$ decreases,
and so does the sequence~$\{\xi_{2n-1,\,i}\}_{n\ge1}$.
Therefore, these two sequences converge respectively.
We shall show that both of these sequences converge to the negative infinity.

Suppose, by way of contradiction, that the sequence $\{\xi_{2n,\,1}\}_{n\ge1}$
converges to some real number~$x^*$. Then for any number $x_0<x^*$,
the number $W_n(x_0)$ has the sign of $W_n(-\infty)$.
It follows that the sign of the number~$W_n(x_0)$ would not change for large~$n$,
which contradicts Theorem~\ref{thm:sgn:d<0:01}. This proves that
$\lim_{n\to\infty}\xi_{2n,i}\=-\infty$
for $i=1$. Its truth for general $i$, in fact, along the same lines,
if it does not hold for some $i\ge2$,
then we can deduce the existence of a number $x_i$ such that $x_i<x_\Delta$
and that the sign of the number $W_n(x_i)$ keeps invariant for large $n$, which leads to a contradiction.

Along the same lines, we can prove that $\lim_{n\to\infty}\xi_{2n-1,\,i}\=-\infty$, for all $i\ge1$.

Now, for any fixed $i\ge1$,
the subsequences $\{\xi_{2n,\,i}\}_{n\ge1}$ and $\{\xi_{2n-1,\,i}\}_{n\ge1}$ converge
to the same point $-\infty$. Hence, the joint sequence $\{\xi_{n,i}\}_{n\ge 1}$
converges to the negative infinity as well, which completes the proof.
\end{proof}
\smallskip
For an illustration for the convergences in Theorem~\ref{thm:lim:xd}
and Theorem~\ref{thm:lim:-infty},
the reader can refer to Tables~\ref{tab:ii} and \ref{tab:iii}.

\bigskip

\bigskip
\section{\large Appendix: Technical proofs}

\subsection{Proof of Lemma \ref{lem:fg:01}}\label{prooflem:fg:01}
Let $x_0=y_0=-\infty$ and $y_{q+1}=\beta$.
The interlacing property $X'\rtimes Y'$ in the premises implies that $p\ge1$ and $q\in\{p-1,\,p\}$.
Since $X'\subset(-\infty,\,\beta)$, we infer that $x_p<\beta$. Therefore, we have that
\[
\cdots\<y_{q-2}\<x_{p-2}\<y_{q-1}\<x_{p-1}\<y_q\<x_p\<\beta.
\]
We shall show Inequality~(\ref{ineq:lem:f}) and Inequality~(\ref{ineq:lem:g}) respectively.

Let $i\in[p]$.  From the definition $X'=X\cap(-\infty,\,\beta)$ and the interlacing property $X'\rtimes Y'$ in the premises, we see that the number~$x_{p+1-i}$ is the unique root of the polynomial~$f(x)$ in the interval $(y_{q+1-i},\,y_{q+2-i})$. Suppose that $f(\beta)\neq0$.
By the intermediate value theorem, we infer that
$f(y_{q+1-i})f(y_{q+2-i})<0$, that is,
\begin{align*}
f(y_{q})f(\beta)&\<0\qquad(i=1),\\
f(y_{q-1})f(y_{q})&\<0\qquad(i=2),\\
&\quad\vdots\\
f(y_{q-p+1})f(y_{q-p+2})&\<0\qquad(i=p).
\end{align*}
Multiplying the first $i$ inequalities in the above list results in that
\[
f(y_{q+1-i})f(\beta)(-1)^{i-1}<0.
\]
Replacing $i$ by $q+1-j$ in it yields Inequality~(\ref{ineq:lem:f}) for $j\in[q+1-p,\,q]$.
When $j=q+1$, since $y_{q+1}=\beta$ stands as a premise, Inequality~(\ref{ineq:lem:f}) holds true trivially.

From the definition $Y'=Y\cap(-\infty,\,\beta)$,
we deduce that the polynomial $g(x)$ has no roots in the interval $(y_q,\beta)$.
Suppose that $g(\beta)\ne0$.
By the intermediate value theorem,
we infer that $g(x)g(\beta)>0$ for all $x\in(y_q,\beta)$. In particular,
we have
\begin{equation}\label{pf56}
g(x_p)g(\beta)\>0,
\end{equation}
which is Inequality~(\ref{ineq:lem:g}) for $j=p$.
Below we can suppose that $p\ge2$, and thus, $q\ge1$.

Let $j\in[q]$. Similar to the previous proof,
we have $g(x_{p-j})g(x_{p+1-j})<0$, that is,
\begin{align*}
g(x_{p-1})g(x_p)&\<0\qquad(j=1),\\
g(x_{p-2})g(x_{p-1})&\<0\qquad(j=2),\\
&\quad\vdots\\
g(x_{p-q})g(x_{p-q+1})&\<0\qquad(j=q).
\end{align*}
Multiplying the first $j$ inequalities in the above list, we find that
\[
g(x_{p-j})g(x_{p})(-1)^{j-1}\<0.
\]
Multiplying it by Inequality~(\ref{pf56}) results in that $g(x_{p-j})g(\beta)(-1)^{j-1}\<0$. Replacing $j$ by $p-i$ in it yields that
\[
g(x_{i})g(\beta)(-1)^{p-i}\>0.
\]
Together with Inequality~(\ref{pf56}), we obtain Inequality~(\ref{ineq:lem:g}). This completes the proof.\qed
\medskip

\subsection{Proof of Lemma \ref{lem:order:01}}\label{prooflem:order:01}

From Equation~(\ref{W:xd}), we have that
\begin{equation}\label{pf1}
\mbox{the numbers }W_n(x_\Delta)\mbox{ and }a+n(2x_\Delta-a)\mbox{ have the same sign}.
\end{equation}
\noindent{\bf(i)} If $c\le -(a^2+2ab)/4$, then we have
\begin{equation}\label{pf2}
x_\Delta=-{a^2+4c\over 4b}\ge-{a^2-(a^2+2ab)\over 4b}={a\over 2},
\end{equation}
that is, $2x_\Delta-a\ge 0$. It follows that $a+n(2x_\Delta-a\ge 0)>0$ for all $n\ge 1$.
By Relation~(\ref{pf1}), we obtain that $W_n(x_\Delta)>0$.

\noindent{\bf(ii)} Below we suppose that $c>-(a^2+2ab)/4$.
From the deduction~(\ref{pf2}), we see that $2x_\Delta-a<0$.
If $n<n_0$, then we have
$a+n(2x_\Delta-a)>a+{2ab\over a^2+2ab+4c}\cdot\bigl(2(-{a^2+4c\over 4b})-a\bigr)=0$, which, by (\ref{pf1}), implies that $W_n(x_\Delta)>0$.
Similarly, if $n=n_0$ then we have that $a+n(2x_\Delta-a)=0$, and thus $W_n(x_\Delta)=0$ by the relation~(\ref{pf1}); and
if $n>n_0$ then we have that $a+n(2x_\Delta-a)<0$, and thus $W_n(x_\Delta)<0$ by the relation~(\ref{pf1}).

When $c>0$, we have that $n_0={2ab\over a^2+2ab+4c}<1$. Therefore, the case $n>n_0$ happens for all $n\geq 1$, that is, $W_n(x_\Delta)<0$.
Thus, by  Definition \ref{def:xg} we obtain that $x_g<0$. Moreover, we have
\[
x_g-x_B={(a+b)-\sqrt{(a+b)^2+4c}\over 2}+{c\over b}
={ab+b^2+2c-b\sqrt{a^2+2ab+b^2+4c}\over 2b}.
\]
Thus, to show that $x_g>x_B$, it suffices to show that $(ab+b^2+2c)^2>b^2(a^2+2ab+b^2+4c)$.
By direct calculation, this inequality is equivalent to $4c(ab+c)>0$, which is true since $a,b,c>0$.

When $c<0$, we have $x_g>0$ from Definition~(\ref{def:xg}) straightforwardly.
Suppose to the contrary that $x_g\ge x_B$. It follows that $2c+b(a+b)\ge b\sqrt{(a+b)^2+4c}>0$.
Solving $(2c+b(a+b))^2\ge (b\sqrt{(a+b)^2+4c})^2$ with $c<0$, we see that $c\le -ab$. On the one hand, by solving $2c+b(a+b)>0$, we get $-b(a+b)/2<c\le -ab$, which implies that $a<b$. On the other hand, we have $-(a^2+2ab)/4<c\le -ab$, which implies that $a>2b$. Hence, $2b<a<b$, a contradiction. This proves $x_g<x_B$ when $-(a^2+2ab)/4<c<0$.\qed
\medskip

\subsection{Proof of Lemma \ref{thm:limsgn:d>0:01}}\label{proofthm:limsgn:d>0:01}

By Lemma~\ref{lem:00}, the value $W_n(x_0)$ can be recast as the following form
\[
W_n(x_0)
\={\bigl(A(x_0)+\sqrt{\Delta(x_0)}\,\bigr)^n\over 2^{n}\sqrt{\Delta(x_0)}}
\left[g^+(x_0)-g^-(x_0)\bgg{{A(x_0)-\sqrt{\Delta(x_0)}\over A(x_0)+\sqrt{\Delta(x_0)}}\,}^n\right].
\]
Since $A(x_0)=a>0$ and $\sqrt{\Delta(x_0)}>0$, we deduce that
\[
\left|{A(x_0)-\sqrt{\Delta(x_0)}\over A(x_0)+\sqrt{\Delta(x_0)}}\right|\<1.
\]
Thus we obtain that
\begin{equation}\label{sgn:W:g+}
W_n(x_0)g^+(x_0)>0\qquad\text{for large $n$}.
\end{equation}
Note that the function $2g^+(x)\=2x-a+\sqrt{4(bx+c)+a^2}$
is increasing. Since $g^+(x_g)=0$, we infer that $(x_0-x_g)g^+(x_0)>0$.
In view of~(\ref{sgn:W:g+}), we conclude that $(x_0-x_g)W_n(x_0)>0$
for large $n$, which completes the proof.\qed
\medskip

\subsection{Proof of Lemma \ref{thm:sgn:d<0:01}}\label{proofthm:sgn:d<0:01}

By Lemma~\ref{lem:00}, the sign of the value~$W_n(x_0)$ is equal to the sign of the value $F=\cos\theta+\ell\sin\theta$,
where $\theta=n\theta(x_0)$, and
$\ell=(2x_0-a)/\sqrt{-\Delta(x_0)}$.

If $x_0=a/2$, then the line $l_{x_0}$ becomes the imagine axis $x=0$.
In this case, the sign of the value $W_n(x_0)$
is determined by the sign of the value~$\cos\theta$.
In other words, we have $W_n(x_0)>0$ if and only if
the radian~$n\theta_0$ lies in the right open half-plane, and
$W_n(x_0)<0$ if and only if
the radian~$n\theta_0$ lies in the left open half-plane.

Below we can suppose that $x_0\ne a/2$. It follows that $\ell\ne 0$.
\begin{itemize}
\item Assume that $\ell>0$.
It is elementary to find the following equivalence relation
\[
F>0
\eqrl
\begin{cases}
\tan\theta>-1/\ell,&\text{if $\cos\theta>0$};\\[5pt]
\sin\theta>0,&\text{if $\cos\theta=0$};\\[5pt]
\tan\theta<-1/\ell,&\text{if $\cos\theta<0$}.
\end{cases}
\]
In this case, we have $F>0$ if and only if the radian~$\theta$ lies to
the right of the line $y=-x/\ell$, that is, of the line $l_{x_0}$.
By symmetry, we have $F<0$ if and only if the radian~$\theta$ lies to
the left of the line~$l_{x_0}$. It follows immediately that $F=0$
if and only if the radian~$\theta$ lies on the line~$l_{x_0}$.

\item Now suppose that $\ell<0$.
Then we have the following equivalence relation in the same vein:
\[
F>0
\eqrl
\begin{cases}
\tan\theta<-1/\ell,&\text{if $\cos\theta>0$};\\[5pt]
\sin\theta<0,&\text{if $\cos\theta=0$};\\[5pt]
\tan\theta>-1/\ell,&\text{if $\cos\theta<0$}.
\end{cases}
\]
In this case, we have the same desired characterization.
\end{itemize}
This completes the proof.\qed


\begin{thebibliography}{99}
\bibitem{Bre89}
F. Brenti,
Unimodal, log-concave, and P\'olya frequency sequences in combinatorics,
\textsl{Mem. Amer. Math. Soc.} \textbf{413} (1989).

\bibitem{Bre94}
F. Brenti,
Log-concave and unimodal sequences in algebra, combinatorics, and geometry: an update, \textsl{Contemp. Math.} \textbf{178} (1994), 71--89.

\bibitem{Cau1829}
A.L. Cauchy,
Exercises de mathematique, \textsl{Oeuvres} \textbf{2}(9) (1829), 122.

\bibitem{CL10}
Y. Chen and Y. Liu,
On a conjecture of S. Stahl, \textsl{Canad. J. Math.} \textbf{62} (2010), 1058--1059.

\bibitem{CMZ12}
Y. Chen, T. Mansour, and Q. Zou,
Embedding distributions and Chebyshev polynomials, \textsl{Graphs Combin.} \textbf{28} (2012), 597--614.

\bibitem{Fisk08X}
S. Fisk,
Polynomials, roots, and interlacing, arXiv: math/0612833.

\bibitem{Fuj16}
M. Fujiwara,
\"Uber die obere Schranke des absoluten Betrages der Wurzeln einer algebraischen Gleichung,
\textsl{T\^ohoku Math. J.} \textbf{10} (1916), 167--171.

\bibitem{FGS89}
M. Furst, J.L. Gross, and R. Statman,
Genus distributions for two class of graphs, \textsl{J. Combin. Theory Ser. B} \textbf{46} (1989), 523--534.

\bibitem{Gr11}
J.L. Gross,
Genus distributions of cubic outerplanar graphs, \textsl{J. Graph Algorithms Appl.} \textbf{15} (2011), 295--316.

\bibitem{GF87}
J.L. Gross and M.L. Furst,
Hierarchy for imbedding-distribution invariants of a graph, \textsl{J. Graph Theory} \textbf{11} (1987),  205--220.

\bibitem{GKP14}
J.L. Gross, I.F. Khan, and M.I. Poshni,
Genus distributions for iterated claws, \textsl{Electron. J. Combin.} \textbf{21}(1) (2014), \#P1,12.

\bibitem{GMTW14}
J.L. Gross, T. Mansour, T.W. Tucker, and D.G.L. Wang,
Iterated claws have real-rooted genus polynomials, Keynote address at GEMS13 Conference in Slovakia, manuscript. Conference proceedings to appear in \textsl{Ars Math. Contemporanea}.

\bibitem{GRT89}
J.L. Gross, D.P. Robbins, and T.W. Tucker,
Genus distributions for bouquets of circles,
\textsl{J. Combin. Theory Ser. B} \textbf{47} (1989), 292--306.

\bibitem{Har67}
L.H. Harper,
Stirling behavior is asymptotically normal,
\textsl{Ann. Math. Statist.} \textbf{38}(2) (1967), 410--414.

\bibitem{HM97}
H.P. Hirst and W.T. Macey,
Bounding the roots of polynomials, \textsl{Coll. Math. J.} \textbf{28}(4) (1997), 292.

\bibitem{KL93}
V.L. Kocic and G. Ladas,
\textsl{Global Behavior of Nonlinear Difference Equations of Higher Order with Applications},
Mathematics and Its Applications, Volume 256, Kluwer Academic Publishers, Dordrecht/Boston/London, Springer, 1993.

\bibitem{Las02}
J.B. Lasserre,
Polynomials with all zeros real and in a prescribed interval, \textsl{J. Alg. Combin.} \textbf{16} (2002), 231--237.

\bibitem{LW07}
L.L. Liu and Y. Wang,
A unified approach to polynomial sequences with only real zeros, \textsl{Adv. in Appl. Math.} \textbf{38} (2007), 542--560.

\bibitem{Mar85B}
M. Marden,
\textsl{Geometry of Polynomials}, Issue 3 of Mathematical Surveys, 2nd ed., Amer. Math. Soc., 1985.

\bibitem{MSV02}
D. Merlini, R. Sprugnoli and M.C. Verri,
The tennis ball problem, \textsl{J. Combin. Theory Ser. A} \textbf{99} (2002), 307--344.

\bibitem{Obr03B}
N. Obreshkov,
\textsl{Zeros of Polynomials}, Volume 7 of Bulgarian Academic Monographs, Marin Drinov Academic Publishing House, 2003.

\bibitem{Pit97}
J. Pitman,
Probabilistic bounds on the coefficients of polynomials with only real zeros, \textsl{J. Combin. Theory Ser. A}  \textbf{77} (1997), 279--303.

\bibitem{PKG14}
M.I. Poshni, I.F. Khan, and J.L. Gross,
Genus distributions of iterated 3-wheels and 3-prisms, \textsl{Ars Math. Contemporanea} \textbf{7} (2014), 423--440.

\bibitem{Pra10B}
V.V. Prasolov,
\textsl{Polynomials}, Volume 11 of  Algorithms and Computation in Mathematics, Springer-Verlag Berlin Heidelberg, 2010.

\bibitem{RS02}
Q.I. Rahman and G. Schmeisser, 
\textsl{Analytic Theory of Polynomials}, London Math. Soc. Monogr. New Ser. 26., The Clarendon Press, Oxford University Press, Oxford, 2002.

\bibitem{Rota85}
G.-C. Rota and D. Sharp, 
Mathematics, Philosophy, and Artificial Intelligence, \textsl{Los Alamos Sci.}, \textbf{12}, 1985.

\bibitem{OEIS}
N.J.A. Sloane, The on-line encyclopedia of integer sequences, {\tt http://oeis.org}.

\bibitem{St97}
S. Stahl, 
On the zeros of some genus polynomials, \textsl{Canad. J. Math.} \textbf{49} (1997), 617--640.
\end{thebibliography}
\end{document}